\documentclass[a4paper,10pt]{amsart}
\usepackage[utf8]{inputenc}
\usepackage[T1]{fontenc}

\usepackage{amsmath}
\usepackage{amssymb}
\usepackage{amsthm}
\usepackage{mathtools}

\usepackage{eucal} 
\usepackage{mathdots}
\usepackage[colorlinks=true,linkcolor=red,citecolor=blue,urlcolor=green]{hyperref}
\usepackage[capitalise]{cleveref}
\usepackage{enumitem}
\usepackage{mathrsfs}  

\usepackage{tikz-cd}
\usepackage{adjustbox}
\usepackage{contour}
\usepackage{ulem}

\numberwithin{equation}{section}
\crefname{equation}{}{}

\crefformat{section}{\S#2#1#3}
\crefmultiformat{section}{\S\S#2#1#3}{ and~#2#1#3}{, #2#1#3}{, and~#2#1#3}

\contourlength{0.8pt}

\newcommand{\ULL}[1]{%
  \uline{\phantom{#1}}%
  \llap{\contour{white}{#1}}%
}

\setcounter{tocdepth}{1}

\newtheorem{theorem}{Theorem}[section]
\newtheorem{prop}[theorem]{Proposition}
\newtheorem{cor}[theorem]{Corollary}
\newtheorem{lemma}[theorem]{Lemma}

\theoremstyle{definition}
\newtheorem{dfn}[theorem]{Definition}
\newtheorem{rmk}[theorem]{Remark}
\newtheorem{quest}[theorem]{Question}

\DeclareMathOperator{\Hom}{\mathsf{Hom}}

\DeclareMathOperator{\Ext}{\mathsf{Ext}}
\DeclareMathOperator{\Tor}{\mathsf{Tor}}

\DeclareMathOperator{\RHom}{\mathbf{R}\mathsf{Hom}}

\newcommand{\Acal}{\mathcal{A}}

\newcommand{\Ccal}{\mathcal{C}}

\newcommand{\Fcal}{\mathcal{F}}
\newcommand{\Gcal}{\mathcal{G}}

\newcommand{\Ical}{\mathcal{I}}

\newcommand{\Mcal}{\mathcal{M}}

\newcommand{\Pcal}{\mathcal{P}}

\newcommand{\Scal}{\mathcal{S}}
\newcommand{\Tcal}{\mathcal{T}}

\newcommand{\Xcal}{\mathcal{X}}
\newcommand{\Ycal}{\mathcal{Y}}
\newcommand{\Zcal}{\mathcal{Z}}

\newcommand{\Qbb}{\mathbb{Q}}

\newcommand{\Zbb}{\mathbb{Z}}

\newcommand{\D}{\mathsf{D}}

\newcommand{\Mod}[1]{\mathsf{Mod}\mbox{-}#1}
\newcommand{\lMod}[1]{#1\mbox{-}\mathsf{Mod}}

\renewcommand{\mod}[1]{\mathsf{mod}\mbox{-}#1}

\newcommand{\depth}{\mathsf{depth}}
\newcommand{\width}{\mathsf{width}}
\newcommand{\CH}{\mathsf{Ch}}

\newcommand{\Spec}[1]{\mathsf{Spec}(#1)}

\newcommand{\Supp}{\mathsf{Supp}}
\newcommand{\supp}{\mathsf{supp}}

\newcommand*{\Perp}[1]{{}^{\perp_{#1}}}

\newcommand{\Prod}{\mathsf{Prod}}

\newcommand{\Add}{\mathsf{Add}}

\newcommand{\pp}{\mathfrak{p}}
\newcommand{\PP}{\mathfrak{P}}
\newcommand{\qq}{\mathfrak{q}}
\newcommand{\mm}{\mathfrak{m}}

\newcommand{\ef}{\mathsf{f}}

\newcommand{\height}{\mathsf{height}}

\newcommand{\RGamma}{\mathbf{R}\Gamma}
\newcommand{{\tst}}{\textit{t}-}
\newcommand{\pd}{\mathsf{pd}}
\newcommand{\fd}{\mathsf{fd}}
\newcommand{\id}{\mathsf{id}}

\newcommand{\Gfd}{\mathsf{Gfd}}
\newcommand{\Gid}{\mathsf{Gid}}

\newcommand{\CMHomid}{\mathsf{CM_{\Hom}id}}
\newcommand{\CMDid}{\mathsf{CMid}}
\newcommand{\CMSid}{\mathsf{CM_*id}}
\newcommand{\CMSfd}{\mathsf{CM_*fd}}
\newcommand{\Rid}{\mathsf{Rid}}
\newcommand{\Rfd}{\mathsf{Rfd}}
\newcommand{\rid}{\mathsf{rid}}
\newcommand{\Findim}{\mathsf{Findim}}
\newcommand{\findim}{\mathsf{findim}}

\newcommand{\grade}{\mathsf{grade}}
\newcommand{\newterm}[1]{\ULL{\text{#1}}}

\title{Restricted injective dimensions over Cohen-Macaulay rings}
\author{Michal Hrbek}
\address[M. Hrbek]{Institute of Mathematics of the Czech Academy of Sciences, \v{Z}itn\'{a} 25, 115 67 Prague, Czech Republic}
\email{hrbek@math.cas.cz}

\author{Giovanna Le Gros}
\address[G. Le Gros]{Dipartimento di Matematica ``Tullio Levi-Civita'', Universit\`{a} di Padova, Via Trieste 63, 35121 Padova (Italy)}
\email{legros@math.unipd.it}

\subjclass[2020]{}

\thanks{The first author was supported by the GAČR project 23-05148S and the Academy of Sciences of the Czech Republic (RVO 67985840). This project was partially developed during the visit of the first author to Università degli Studi di Padova, he would like to thank the Dipartimento di Matematica for their hospitality. The visit was partially funded by DFG (Deutsche Forschungsgemeinschaft) through a scientific network on silting theory.}
\begin{document}
\begin{abstract}
  We show that the small and large restricted injective dimensions coincide for Cohen-Macaulay rings of finite Krull dimension. Based on this, and inspired by the recent work of Sather-Wagstaff and Totushek, we suggest a new definition of Cohen-Macaulay Hom injective dimension. We show that the class of Cohen-Macaulay Hom injective modules is the right constituent of a perfect cotorsion pair. Our approach relies on tilting theory, and in particular, on the explicit construction of the tilting module inducing the minimal tilting class recently obtained in \cite{HNS}.
\end{abstract}
\maketitle
\tableofcontents
\section{Introduction}
While the global dimension of a commutative noetherian ring $R$ is infinite unless $R$ is regular, Raynaud and Gruson \cite{RG71} proved that the finitistic global dimension of $R$ is equal to its Krull dimension, which we shall assume to be finite for the rest of this introduction. Christensen, Foxby, and Frankild \cite{CFF02} defined the (large) restricted injective dimension in terms of Ext-orthogonality to the class $\Pcal$ of modules of finite projective dimension, an invariant always bounded by the finitistic global dimension of $R$. One cannot expect Baer's criterion to hold for the restricted injective dimension in general, and thus \cite{CFF02} also considers the small restricted injective dimension defined using only those modules of finite projective dimension which are finitely generated. The situation in which the small and the large restricted injective dimensions coincide occurs precisely when the cotorsion pair $(\Pcal,\Pcal\Perp{1})$ is of finite type (see \cref{ss:cotpairs} and \cref{ss:findim}). The question of when the modules of bounded projective dimension are of finite type was studied in a more general setting by Bazzoni and Herbera \cite{BH09}. In particular, they provide a criterion for when the modules of projective dimension at most one are of finite type.

Our first main result \cref{T:finite-type} shows that $(\Pcal,\Pcal\Perp{1})$ is of finite type precisely when the ring $R$ is Cohen-Macaulay. In addition, in this case the two restricted injective dimensions coincide in this case with the Chouinard invariant, a refinement of injective dimension introduced in \cite{Cho76}. Our approach relies on tilting theory: The class of modules of small restricted injective dimension at most zero is precisely the minimal tilting class in $\Mod R$. In the recent work \cite{HNS}, the tilting module for this tilting class has been constructed explicitly as the coproduct of local cohomology modules. Using this construction, we show in \cref{filtration} that modules from $\Tcal$ admit ``canonical filtrations'', a deconstruction result established first for Gorenstein injectives over Gorenstein rings by Enochs and Huang \cite{E11}. These filtrations then allow us to prove that this tilting module is the unique product-complete tilting $R$-module up to equivalence (\cref{T-prod-compl,T:unique-prodcomp}), and in turn that the left constituent $\Acal$ of the minimal tilting cotorsion pair $(\Acal,\Tcal)$ coincides with $\Pcal$. 

The minimal tilting class $\Tcal$ coincides with the class $\Ical_0$ of all injective $R$-modules if and only if $R$ is regular, and it coincides with the class $\Gcal\Ical_0$ of all Gorenstein injective $R$-modules if and only if $R$ is Gorenstein. The notion of CM-dimension for finitely generated modules, extending the notion of G-dimension of Auslander and Bridger, was introduced by Gerko \cite{G01}. Later, Holm and J{\o}rgensen \cite{HJ07} developed notions of Cohen-Macaulay projective, injective, and flat dimensions in terms of trivial extensions over semidualizing modules. Finiteness of any of these dimensions characterizes Cohen-Macaulay rings admitting a dualizing module. Recently, Sahandi, Sharif, and Yassemi \cite{SSY20} defined other notions of Cohen-Macaulay injective and flat dimensions, whose finiteness characterizes general Cohen-Macaulay rings. Inspired by the complete intersection Hom injective dimension recently introduced by Sather-Wagstaff and Totushek \cite{SWT21}, we define the Cohen-Macaulay Hom injective dimension and prove that it yields a refinement of the notion of Cohen-Macaulay injective dimension of Holm and J{\o}rgensen. Applying our main result on restricted injective dimensions over a Cohen-Macaulay ring, we show that the class $\Ccal\Mcal\Ical_0$ of Cohen-Macaulay Hom injective modules enjoys similar properties as the class $\Gcal\Ical_0$ of Gorenstein injectives over a Gorenstein ring:
\begin{theorem}
  Let $R$ be a Cohen-Macaulay ring of finite Krull dimension. Then the following hold:
  \begin{itemize}
    \item[(i)] [\cref{TCMid}, \cref{T:finite-type}] The minimal tilting cotorsion pair in $\Mod R$ is of the form $(\Pcal,\Ccal\Mcal\Ical_0)$.
    \item[(ii)] [\cref{T-prod-compl}, \cref{cm-flat}] The class $\Ccal\Mcal\Ical_0$ is definable and enveloping. The dual definable class is $\Ccal\Mcal\Fcal_0$ of Cohen-Macaulay flat modules. 
    \item[(iii)][\cref{filtration}] Modules from $\Ccal\Mcal\Ical_0$ admit canonical filtrations.
  \end{itemize}
\end{theorem}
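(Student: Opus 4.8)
The three assertions are the headline consequences of results established in the body of the paper, so the task is to explain how they assemble; the substantive work is isolated in \cref{T:finite-type,TCMid,T-prod-compl,cm-flat,filtration}.

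\emph{(i).} The plan is to combine the finite-type dichotomy with the identification of the minimal tilting class. Since $\findim R=\dim R<\infty$ (Raynaud--Gruson), the class $\Pcal$ is the class of modules of projective dimension at most $\dim R$, so $(\Pcal,\Pcal\Perp{1})$ is a complete cotorsion pair. By \cref{T:finite-type}, when $R$ is Cohen-Macaulay this cotorsion pair is of finite type, so $\Pcal\Perp{1}$ is closed under direct limits and $(\Pcal,\Pcal\Perp{1})$ is a tilting cotorsion pair; it is moreover the \emph{minimal} one, since the left-hand class of any tilting cotorsion pair is contained in the class of modules of projective dimension at most that of the tilting module, hence in $\Pcal$. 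Thus the minimal tilting cotorsion pair is $(\Pcal,\Tcal)$ with $\Tcal=\Pcal\Perp{1}$ the minimal tilting class; here the explicit tilting module of \cite{HNS} (a coproduct of local cohomology modules of $R$), the product-completeness results \cref{T-prod-compl,T:unique-prodcomp}, and the canonical filtrations of (iii) are what underpin the finite-type equivalence and give a concrete grip on $\Tcal$. Finally \cref{TCMid} identifies $\Tcal$ with $\Ccal\Mcal\Ical_0$, by unwinding the new definition of Cohen-Macaulay Hom injective dimension and checking that ``dimension zero'' in that sense is the same as small restricted injective dimension zero, equivalently Chouinard invariant zero.

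\emph{(ii).} Here I would combine standard properties of tilting classes with elementary duality. As a tilting class, $\Tcal=\Ccal\Mcal\Ical_0$ is definable; as the right-hand class of the complete cotorsion pair $(\Pcal,\Tcal)$ it is special preenveloping, and a definable special preenveloping class is enveloping (a preenveloping class closed under direct limits is enveloping). For the dual definable class, pass to character modules: with $T$ the product-complete tilting module of \cite{HNS} and $T^+=\Hom_{\Zbb}(T,\Qbb/\Zbb)$, a module $N$ lies in the dual definable class of $\Tcal$ exactly when $N^+\in\Tcal$, that is, when $\Tor_i^R(T,N)=0$ for all $i\ge 1$, that is, when $N$ lies in the cotilting class of the cotilting module $T^+$. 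By \cref{cm-flat} this $\Tor$-orthogonal class is $\Ccal\Mcal\Fcal_0$, the modules of Cohen-Macaulay flat dimension zero --- a computation dual to the one in \cref{TCMid}, again using $\dim R<\infty$ so that the flat and projective resolutions in play are bounded.

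\emph{(iii)} is \cref{filtration}, whose proof exploits the description from \cite{HNS} of the tilting module for $\Tcal$ as a coproduct of local cohomology modules $\RGamma_\pp(R)$: given $M\in\Ccal\Mcal\Ical_0$, one filters $M$ by its submodules supported on the specialization-closed subsets cut out by a filtration of $\Spec{R}$ by height, and verifies that the consecutive quotients are coproducts of copies of the modules $\RGamma_\pp(R)$, mirroring the Enochs--Huang deconstruction of Gorenstein injectives over a Gorenstein ring.

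\emph{Main obstacle.} The hard part is not the bookkeeping above but its two inputs: the equivalence ``$R$ Cohen-Macaulay if and only if $(\Pcal,\Pcal\Perp{1})$ is of finite type'' of \cref{T:finite-type} --- equivalently, that over a Cohen-Macaulay ring every module of small restricted injective dimension zero is automatically right orthogonal to \emph{all} modules of finite projective dimension, not merely the finitely generated ones --- and the construction of the canonical filtrations of \cref{filtration} from the explicit tilting module of \cite{HNS}. These are where I expect the real difficulty to lie.
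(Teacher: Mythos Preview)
Your roadmap is essentially the paper's own: the introductory theorem has no separate proof, only the bracketed cross-references, and you have correctly traced how \cref{T:finite-type}, \cref{TCMid}, \cref{T-prod-compl}, \cref{cm-flat}, and \cref{filtration} assemble into the three assertions. Two corrections are in order, one minor and one substantive.

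First, the minor one: it is $\Findim(R)=\dim(R)$ (Raynaud--Gruson), not $\findim(R)$; for a local ring the small finitistic dimension is $\depth(R)$ by Auslander--Buchsbaum, and the gap between these two is exactly what drives the implication $(ii)\Rightarrow(i)$ in \cref{T:finite-type}. This does not affect your argument for part~(i), since only finiteness of $\Findim(R)$ is needed there.

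Second, and more importantly, your description of the canonical filtration in~(iii) is not what \cref{filtration} actually proves. The consecutive quotients are \emph{not} coproducts of copies of $\RGamma_\pp(R)$ (i.e.\ of the summands $T(\pp)$ of the tilting module); they are $\bigoplus_{\height(\pp)=i} M(\pp)$, where $M(\pp)\cong\RGamma_\pp(M_\pp)$ is the derived $\pp$-local $\pp$-torsion part of $M$ itself. These $M(\pp)$ lie in $\Tcal_{\height}$ by \cref{Mp} but need not lie in $\Add(T_{\height})$. Relatedly, the filtration is not obtained by taking naive torsion submodules $\Gamma_{W_k}(M)$: one applies the \emph{derived} functors $\RGamma_{W_k}$ and uses \cref{unique-tor} to see that, because $M\in\Tcal_{\height}$, each $\RGamma_{W_k}M$ is concentrated in degree zero, so the approximation triangles become short exact sequences of modules. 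This is the mechanism that makes the induction in \cref{filtration} work, and it is also what feeds into \cref{AddT} and hence into \cref{T:finite-type}.
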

The structure of the paper is as follows. In \cref{s:preliminaries} we gather preliminary facts about restricted injective dimensions and their relation to tilting theory, first over general rings and then specialize to the commutative noetherian situation. The main result establishing the finite type of $\Pcal$ over a Cohen-Macaulay ring is proved in \cref{s:tiltingcm}. In \cref{s:cmhomdim} we introduce our definition of Cohen-Macaulay Hom injective dimension and show that the minimal tilting class over a Cohen-Macaulay ring consists precisely of Cohen-Macaulay Hom injectives. In the final \cref{s:cotilting}, we explain that analogous, and in fact easier results hold on the dual cotilting side and show that the Cohen-Macaulay flat modules form a dual definable class to Cohen-Macaulay Hom injectives.
\section{Preliminaries}\label{s:preliminaries}
Let $R$ be an associative unital ring. We denote by $\Mod R$ the category of all right $R$-modules and by $\mod R$ be the (full, isomorphism-closed) subcategory consisting of those modules which admit a resolution by finitely generated projective $R$-modules. For any module $M$, let $\Add(M)$ denote the subcategory consisting of all modules which are isomorphic to a direct summand of the coproduct $M^{(X)}$ for some set $X$. Similarly, $\Prod(M)$ is the subcategory consisting of all modules which are isomorphic to a direct summand of the product $M^{X}$ for some set $X$. We let $\D(R)$ denote the unbounded derived category of cochain complexes of right $R$-modules. Given $M \in \D(R)$, we let $\inf M = \inf \{n \in \Zbb \mid H^n(M) \neq 0\}$ and $\sup M = \sup \{n \in \Zbb \mid H^n(M) \neq 0\}$ denote the cohomological infimum and supremum of $M$. We call $M$ \newterm{cohomologically bounded} if either $M = 0$ in $\D(R)$ or if both $\inf M$ and $\sup M$ are integers. We use the usual notation $\Ext_R^i(M,N) = H^i \RHom_R(M,N)$ and $\Tor_i^R(M,N) = H^{-i}(M \otimes_R^\mathbf{L} N)$ for any $M$ and $N$ cochain complexes of right (resp., left) $R$-modules.

\subsection{}\label{ss:dimensions} For $n \geq 0$, let $\Pcal_n = \{M \in \Mod R \mid \pd_R M \leq n\}$, $\Ical_n = \{M \in \Mod R \mid \id_R M \leq n\}$, and $\Fcal_n = \{M \in \Mod R \mid \fd_R M \leq n\}$ denote the subcategories of $\Mod R$ consisting of all modules of projective, injective, or flat dimension bounded above by $n$. We use the notation $\Pcal = \bigcup_{n \geq 0}\Pcal_n$ for modules of finite projective dimension, similarly we put $\Fcal = \bigcup_{n \geq 0}\Fcal_n$ and $\Ical = \bigcup_{n \geq 0}\Ical_n$. Furthermore, we let $\Pcal_n^f = \Pcal_n \cap \mod R$ and $\Pcal^f = \Pcal \cap \mod R$. 

Christensen, Foxby, and Frankild introduced in \cite{CFF02} the notions of restricted homological dimensions for a commutative noetherian ring. The following is one of the ways of extending their definition to an arbitrary ring. Given an $R$-module or an $R$-complex $M$, we define the \newterm{small restricted injective dimension} as
$$\rid_R(M) = \sup\{i \mid \Ext_R^i(\Pcal^f, M)\neq 0\}$$
and the \newterm{(large) restricted injective dimension} is defined as
$$\Rid_R(M) = \sup\{i \mid \Ext_R^i(\Pcal, M)\neq 0\}.$$
As with other notions of homological dimensions, the definition entails the convention $\Rid(0) = -\infty = \rid(0)$. 
\subsection{}\label{ss:cotpairs} Given a subcategory $\Ccal$ of $\Mod R$ we use the notation $\Ccal\Perp{1}= \{M \in \Mod R \mid \Ext_R^1(C,M) = 0 ~\forall C \in \Ccal\}$ and $\Ccal\Perp{}= \{M \in \Mod R \mid \Ext_R^i(C,M) = 0 ~\forall C \in \Ccal, ~\forall i>0\}$; we also define $\Perp{1}\Ccal$ and $\Perp{}\Ccal$ analogously and we drop the curly brackets whenever $\Ccal = \{C\}$ for some module $C$. A \newterm{cotorsion pair} in $\Mod R$ is a pair $(\Xcal,\Ycal)$ of subcategories of $\Mod R$ such that $\Ycal = \Xcal\Perp{1}$ and $\Xcal = \Perp{1}\Ycal$. A cotorsion pair is \newterm{complete} if for any $M \in \Mod R$ there are exact sequences $0 \to M \to Y^M \to X^M \to 0$ and $0 \to Y_M \to X_M \to M \to 0$ with $X^M,X_M \in \Xcal$ and $Y^M,Y_M \in \Ycal$. It is \newterm{hereditary} if $\Ext_R^i(X,Y) = 0$ for all $X \in \Xcal$, $Y \in \Ycal$, and $i>0$. Finally, it is \newterm{of finite type} if there is $n \geq 0$ and a subset $\Scal$ of $\Pcal_n^f$ such that $\Ycal = \Scal\Perp{}$. Any cotorsion pair of finite type is automatically complete and hereditary, see \cite[\S 6]{GT12}.
\subsection{}\label{ss:findim} Recall that the \newterm{finitistic dimension} of $R$ is defined as 
$$\Findim(R) = \sup \{\pd_R M \mid M \in \Pcal\},$$
while the \newterm{small finitistic dimension} of $R$ is 
$$\findim(R) = \sup \{\pd_R M \mid M \in \Pcal^f\}.$$
We have $\Findim(R) \leq n < \infty$ if and only if $\Pcal = \Pcal_n$ and $\findim(R) \leq n < \infty$ if and only if $\Pcal^f = \Pcal^f_n$. Clearly, we have $\Rid_R(M) \leq \Findim(R)$ and $\rid_R(M) \leq \findim(R)$ for all $M \in \Mod R$. If $\Findim(R)<\infty$, then \cite{AEJO} implies that $(\Pcal,\Pcal\Perp{1})$ is a complete hereditary cotorsion pair. If $\findim(R)<\infty$ then $(\Perp{1}(\Pcal^f\Perp{1}),\Pcal^f\Perp{1})$ is a cotorsion pair of finite type. Assuming $\Findim(R)<\infty$ ($\implies \findim(R)<\infty$), it follows that both $\Rid_R$ and $\rid_R$ are relative cohomological dimensions induced by complete hereditary cotorsion pairs. In particular, an $R$-module or a cohomologically bounded $R$-complex $M$ satisfies $\Rid_R(M) \leq k$ (resp. $\rid_R(M) \leq k$) if and only if it admits a $\Pcal\Perp{1}$-coresolution (resp., $\Pcal^f\Perp{1}$-coresolution) of length $k$, that is, there is an exact sequence
$$0 \to M \to P^0 \to P^1 \to \cdots \to P^k \to 0$$
with $P^i \in \Pcal\Perp{1}$ (resp., $P^i \in \Pcal^f\Perp{1}$) for all $i=0,1,\ldots,k$.
We have the following observation:
\begin{lemma}\label{l:fintype}
  Assume $\Findim(R) < \infty$, then the following are equivalent:
  \begin{enumerate}
    \item[(i)] $\Rid_R(M) = \rid_R(M)$ for each cohomologically bounded complex $M$, 
    \item[(ii)] $\Rid_R(M) = \rid_R(M)$ for each $R$-module $M$,
    \item[(iii)] the cotorsion pairs $(\Pcal,\Pcal\Perp{1})$ and $(\Perp{1}(\Pcal^f\Perp{1}),\Pcal^f\Perp{1})$ coincide,
    \item[(iv)] the cotorsion pair $(\Pcal,\Pcal\Perp{1})$ is of finite type.
  \end{enumerate}
\end{lemma}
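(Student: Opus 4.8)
The plan is to prove the cycle (i)~$\Rightarrow$~(ii)~$\Rightarrow$~(iii)~$\Rightarrow$~(i) together with the equivalence (iii)~$\Leftrightarrow$~(iv). Two preliminary remarks drive the argument. First, a syzygy of a module of finite projective dimension again has finite projective dimension, so dimension shifting gives $\Pcal\Perp{1} = \Pcal\Perp{}$; likewise, any syzygy appearing in a resolution of an object of $\mod R$ by finitely generated projectives is itself a finitely generated quotient of such a projective, hence again in $\mod R$, so dimension shifting also gives $\Pcal^f\Perp{1} = \Pcal^f\Perp{}$. Second, $\Pcal^f \subseteq \Pcal$ yields $\Pcal\Perp{1} \subseteq \Pcal^f\Perp{1}$, and since a cotorsion pair is determined by its right-hand class, the two cotorsion pairs in (iii) coincide if and only if this inclusion is an equality. (Both are genuine cotorsion pairs under $\Findim(R) < \infty$: the first by \cite{AEJO}, the second by \cref{ss:findim}.)

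Granting these, (i)~$\Rightarrow$~(ii) is the trivial specialisation to complexes concentrated in a single degree. For (ii)~$\Rightarrow$~(iii) I argue contrapositively: if the cotorsion pairs differ there is a module $M \in \Pcal^f\Perp{1} \setminus \Pcal\Perp{1}$; membership in $\Pcal^f\Perp{1} = \Pcal^f\Perp{}$ forces $\rid_R(M) \leq 0$, whereas $M \notin \Pcal\Perp{1}$ exhibits some $C \in \Pcal$ with $\Ext_R^1(C,M) \neq 0$, so $\Rid_R(M) \geq 1$, violating (ii). For (iii)~$\Rightarrow$~(i) I invoke the coresolution descriptions of $\Rid_R$ and $\rid_R$ recorded in \cref{ss:findim}, available since $\Findim(R) < \infty$: when the two cotorsion pairs coincide, a cohomologically bounded complex admits a $\Pcal\Perp{1}$-coresolution of a given length exactly when it admits a $\Pcal^f\Perp{1}$-coresolution of that length, so $\Rid_R(M) = \rid_R(M)$.

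It remains to handle (iii)~$\Leftrightarrow$~(iv). The direction (iii)~$\Rightarrow$~(iv) is immediate, since $(\Perp{1}(\Pcal^f\Perp{1}),\Pcal^f\Perp{1})$ is of finite type by \cref{ss:findim} (note $\findim(R) \leq \Findim(R) < \infty$) and is assumed in (iii) to equal $(\Pcal,\Pcal\Perp{1})$. The converse (iv)~$\Rightarrow$~(iii) is the step I expect to require the most care, although it too is short: writing $\Pcal\Perp{1} = \Scal\Perp{}$ with $\Scal \subseteq \Pcal_n^f \subseteq \Pcal^f$, any $M \in \Pcal^f\Perp{} = \Pcal^f\Perp{1}$ satisfies $\Ext_R^i(\Scal,M) = 0$ for all $i > 0$, i.e.\ $M \in \Scal\Perp{} = \Pcal\Perp{1}$; combined with $\Pcal\Perp{1} \subseteq \Pcal^f\Perp{1}$ this equates the two right-hand classes, hence the cotorsion pairs. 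The point demanding attention is that ``finite type'' is phrased with the full Ext-orthogonal $\Scal\Perp{}$ rather than $\Scal\Perp{1}$, so this step genuinely relies on the identifications $\Pcal\Perp{1} = \Pcal\Perp{}$ and $\Pcal^f\Perp{1} = \Pcal^f\Perp{}$ from the first paragraph.
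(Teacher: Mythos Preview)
Your proof is correct and follows essentially the same approach as the paper: the paper runs the cycle $(i)\Rightarrow(ii)\Rightarrow(iii)\Rightarrow(iv)\Rightarrow(i)$, while you do $(i)\Rightarrow(ii)\Rightarrow(iii)\Rightarrow(i)$ together with $(iii)\Leftrightarrow(iv)$, but the underlying arguments at each step are the same (your $(iv)\Rightarrow(iii)$ spells out what the paper leaves implicit in its terse $(iv)\Rightarrow(i)$). Your explicit dimension-shifting identifications $\Pcal\Perp{1}=\Pcal\Perp{}$ and $\Pcal^f\Perp{1}=\Pcal^f\Perp{}$ correspond to the paper's invocation that both cotorsion pairs are hereditary.
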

\begin{proof}
  Since $\findim(R) \leq \Findim(R) < \infty$, both $(\Perp{1}(\Pcal^f\Perp{1}),\Pcal^f\Perp{1})$ and $(\Pcal,\Pcal\Perp{1})$ are complete hereditary cotorsion pairs.

  $(i) \implies (ii)$: Trivial.

  $(ii) \implies (iii)$: This follows directly from $\Pcal\Perp{1} = \{M \in \Mod R \mid \Rid_R(M) \leq 0\}$ and $\Pcal^f\Perp{1} = \{M \in \Mod R \mid \rid_R(M) \leq 0\}$.

  $(iii) \implies (iv):$ Trivial, as $(\Perp{1}(\Pcal^f\Perp{1}),\Pcal^f\Perp{1})$ is of finite type.

  $(iv) \implies (i)$: The assumption $(iv)$ implies that $\Rid_R(M) \leq 0$ if and only if $\rid_R(M) \leq 0$. As noted above, for a cohomologically bounded complex $M$, the dimensions $\Rid_R(M)$ and $\rid_R(M)$ can be computed by taking coresolutions by modules with the respective dimension being equal to zero.
\end{proof}
The question of when the cotorsion pair $(\Pcal_n,\Pcal_n\Perp{1})$ is of finite type was studied by Bazzoni and Herbera \cite{BH09}. In particular, this can fail to be true even for $n=1$ and $R$ commutative noetherian \cite[Theorem 8.6]{BH09}. In addition, by the Eklof-Trlifaj theorem \cite[6.2, 6.14]{GT12}, the cotorsion pair $(\Pcal,\Pcal\Perp{1})$ is of finite type if and only if any module $M \in \Pcal$ is a direct summand of a module filtered (=obtained as a transfinite extension) by modules from $\Pcal^f$.
\subsection{}\label{ss:tilting} A module $T \in \Mod R$ is called a \newterm{tilting module} if the following three conditions are satisfied:
\begin{enumerate}
  \item[(T1)] $T \in \Pcal$,
  \item[(T2)] $\Add(T) \subseteq T\Perp{}$,
  \item[(T3)] there is $n \geq 0$ and a short exact sequence $0 \to R \to T_0 \to T_1 \ldots \to T_n \to 0$ with $T_i \in \Add(T)$ for each $i=0,1,\ldots,n$.
\end{enumerate}
Any tilting module $T$ gives rise to a complete hereditary cotorsion pair $(\Acal,\Tcal)= (\Perp{1}(T\Perp{}),T\Perp{})$ called a \newterm{tilting cotorsion pair}, here $\Tcal$ is called the \newterm{tilting class}. Two tilting modules $T,T'$ give rise to the same tilting cotorsion pair (or equivalently, the same tilting class) precisely when $\Add(T) = \Add(T')$, and in this situation we call them \newterm{equivalent}. In fact, $\Add(T)$ determines the tilting cotorsion pair in the sense that the class $\Acal$ consists precisely of modules admitting a finite $\Add(T)$-coresolution, $\Tcal$ consists precisely of modules admitting an $\Add(T)$-resolution, and we have $\Add(T) = \Acal \cap \Tcal$. We refer to \cite[\S 13]{GT12} for details. 

\subsection{}\label{ss:mintilting} A crucial result of Bazzoni-Herbera and Bazzoni-Šťovíček \cite{BH08,BS07} asserts that tilting cotorsion pairs coincide precisely with the cotorsion pairs of finite type. Assume $\findim(R) < \infty$. Then the cotorsion pair $(\Perp{1}(\Pcal^f\Perp{1}),\Pcal^f\Perp{1})$ of \cref{ss:findim} is clearly the \emph{minimal} cotorsion pair of finite type, where the ordering is given by inclusion of the second constituents. Therefore, $\Tcal_{\mathsf{min}}=\Pcal^f\Perp{1}$ is the \emph{minimal} tilting class with respect to inclusion. As discussed in \cref{ss:findim}, we have $\Tcal_{\mathsf{min}} = \{M \in \Mod R \mid \rid_R(M) \leq 0\}$.

\subsection*{Commutative noetherian rings} From now on, let $R$ be a commutative noetherian ring. Let $\dim(R)$ denote its Krull dimension and $\Spec R$ its Zariski spectrum.

\subsection{}\label{ss:depth} 
Given a cochain complex $M$, we define $\depth_R(\pp,M) = \inf \RHom_R(R/\pp,M)$ and $\width_R(\pp,M) = -\sup (R/\pp \otimes_R^\mathbf{L} M)$. In the case $(R,\mm,k)$ is local we simply let $\depth_R(M) = \depth_R(\mm,M)$ and $\width_R(M) = \width_R(\mm,M)$, and as with all similar invariants, we often omit the subscript if the ring is clear from context. We let $\grade_R(M) = \inf\{i \mid \Ext_R^i(M,R) \neq 0\}$ and by convention $\grade(\pp) := \grade(R/\pp) = \depth(\pp,R)$ for $\pp \in \Spec R$. One always has $\grade(\pp) \leq \depth(R_\pp) \leq \height(\pp) := \dim(R_\pp)$ and both the inequalities may fail to be equalities in general. The equality $\depth(R_\pp) = \height(\pp)$ occurs precisely when the local ring $R_\pp$ is Cohen-Macaulay. The equality $\grade(\pp) = \depth(R_\pp)$ holds for all $\pp \in \Spec R$ if and only if $R$ is an almost Cohen-Macaulay ring, see \cite[Lemma 3.1]{CFF02}.

\subsection{}\label{ss:classification} Angeleri-Hügel, Pospíšil, Šťovíček, and Trlifaj \cite[Theorem 4.2]{AHPS14} gave a full classification of tilting cotorsion pairs over a commutative noetherian ring. Here, we follow an exposition explained in \cite[Remark 5.10]{HNS}. We call a function $\ef: \Spec R \to \Zbb$ \newterm{characteristic} if the following hold:
\begin{enumerate}
  \item $\ef$ is order-preserving, that is, $\ef(\pp)\leq \ef(\qq)$ whenever $\pp \subseteq \qq$ in $\Spec R$, 
  \item we have $0 \leq \ef \leq \grade$,
  \item there is an $n \geq 0$ such that $\ef \leq n$ (this condition is superfluous if $\dim(R) < \infty)$.
\end{enumerate}   
Then there is a bijective correspondence between characteristic functions $\ef$ on $\Spec R$ and tilting cotorsion pairs $(\Acal,\Tcal)$ in $\Mod R$. Here, $\ef$ is sent to $(\Acal_\ef,\Tcal_\ef)$ with the tilting class $\Tcal_{\ef} = \{M \in \Mod R \mid \width_R(\pp,M) \geq \ef(\pp) ~\forall \pp \in \Spec R\}$. This correspondence restricts to one between tilting modules of projective dimension at most $n$ (= \newterm{$n$-tilting modules}) and characteristic functions $\ef$ with $\ef \leq n$.

\subsection{}\label{ss:RG} By classical results of Bass \cite{B62} and Raynaud-Gruson \cite{RG71}, $\Findim(R) = \dim(R)$, \cite[Théor\`{e}me 3.2.6]{RG71}. Assume now that $\dim(R)<\infty$. Then any flat $R$-module belongs to $\Pcal$ \cite{Jen70}, \cite[Corollaire 3.2.7]{RG71}, and therefore $\Pcal$ coincides with the class $\Fcal$ of all modules of finite flat dimension. It follows that $\Pcal$ can be described as the class of all modules of flat dimension bounded by $\dim(R)$. In symbols, we have $\Pcal_{\dim(R)} = \Pcal = \Fcal = \Fcal_{\dim(R)}$. Finally, $R$ being noetherian ensures that $\Fcal_{\dim(R)}$ is a \newterm{definable class}, that is, a subcategory of $\Mod R$ closed under direct limits, products, and pure submodules.

\subsection{}\label{ss:rid} Let $R$ be a commutative noetherian ring of finite Krull dimension. Among the characteristic functions $\ef: \Spec R \to \Zbb$ there is always the maximal choice of the grade function $\grade: \pp \mapsto \grade_R(\pp)$. The corresponding tilting cotorsion pair $(\Acal_{\grade},\Tcal_{\grade})$ with $\Tcal_{\grade} = \{M \in \Mod R \mid \width_R(\pp,M) \geq \grade(\pp) ~\forall \pp \in \Spec R\}$ is therefore precisely the minimal tilting cotorsion pair $(\Perp{1}(\Pcal^f\Perp{1}),\Pcal^f\Perp{1})$. In particular, we have using \cref{ss:mintilting}:
\begin{equation}\label{E:Tgrade}
  \Tcal_{\grade} = \{M \in \Mod R \mid \rid_R(M) \leq 0\}.
\end{equation}
By \cite[Proposition 5.3]{CFF02}, we can compute the small restricted injective dimension via the formula 
\begin{equation}\label{E:rid}
\rid_R(M) = \sup \{\grade_R(\pp) - \width (\pp, M) \mid \pp \in \Spec R\},
\end{equation} 
which also implies \cref{E:Tgrade} by comparing it directly with the classification \cref{ss:classification}.
\section{The minimal tilting class over a Cohen-Macaulay ring}\label{s:tiltingcm}
The aim of this section is to study the minimal tilting class $\Tcal_{\mathsf{min}}$ under the assumption that $R$ is Cohen-Macaulay of finite Krull dimension. As discussed in \cref{ss:depth}, this assumption ensures $\grade_R = \height_R$, and so in view of \cref{ss:rid} we have $\Tcal_{\mathsf{min}}=\Tcal_{\grade} = \Tcal_{\height} = \{M \in \Mod R \mid \width_R(\pp,M) \geq \height(\pp) ~\forall \pp \in \Spec R\}$.

\subsection{}\label{ss:Chouinard} 
Introduced in \cite{Cho76}, the \newterm{Chouinard invariant} is defined as 
$$\CH_R(M) = \sup \{\depth R_\pp - \width_{R_\pp}(M_\pp) \mid \pp \in \Spec R\}.$$ 
Note that $\CH_R(0) = - \infty$ and $\CH_R(M)$ is always bounded above by $\dim(R)$. The Chouinard invariant refines the injective dimension in the sense that one always has $\CH_R(M) \leq \id_R(M)$ and this becomes an equality whenever $M$ is cohomologically bounded and $\id_R(M) < \infty$, see \cite{Yas98}. 

Even for $R$ Cohen-Macaulay, one cannot expect the equality $\width(\pp,M) = \width(M_\pp)$ to hold in general. Nevertheless, the two invariants we defined in terms of these values coincide.
\begin{prop}\label{CM-rid}
  Let $R$ be a commutative noetherian ring of finite Krull dimension. For any cohomologically bounded complex $M$ we have 
  $$\sup\{\height(\pp)-\width(M_\pp) \mid \pp \in \Spec R\} =$$ 
  $$=\sup\{\height(\pp)-\width(\pp,M) \mid \pp \in \Spec R\}.$$
  As a consequence, if $R$ is in addition Cohen-Macaulay, then $\CH_R(M) = \rid_R(M)$.
\end{prop}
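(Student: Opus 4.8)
The plan is to prove the two suprema agree by comparing them locally at each prime $\pp$, and to do this I would reduce the ``width along $\pp$'' invariant $\width_R(\pp,M)$ to a genuine local width after localizing and modding out. The key starting point is that for a cohomologically bounded complex $M$ and a prime $\pp$, one has the change-of-rings identity $\width_R(\pp,M) = \width_{R_\pp}(\pp R_\pp, M_\pp)$; this is because $R/\pp \otimes_R^{\mathbf L} M$ is supported only at primes containing $\pp$, so localizing at $\pp$ loses no homological information, i.e. $(R/\pp \otimes_R^{\mathbf L} M)_\pp \cong (R_\pp/\pp R_\pp) \otimes_{R_\pp}^{\mathbf L} M_\pp$ in $\D(R_\pp)$. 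Hence both suprema can be rewritten as suprema over $\pp$ of $\height(\pp)$ minus a quantity computed entirely over the local ring $R_\pp$, namely $\width_{R_\pp}(M_\pp)$ on the left and $\width_{R_\pp}(\kappa(\pp), M_\pp)$ on the right (where $\kappa(\pp)$ is the residue field). So it suffices to show that for every local ring $(S,\mm,k)$ and every cohomologically bounded $S$-complex $N$, the contributions $\dim(S) - \width_S(N)$ and $\depth(S) - \width_S(k, N)$ enter the global supremum in a compatible way after accounting for $\height$.

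The first inequality, that the left supremum is $\leq$ the right supremum, is the easy direction: for any $\pp$ one has $\width_{R_\pp}(M_\pp) \geq \width_{R_\pp}(\kappa(\pp),M_\pp) = \width_R(\pp,M)$, because $-\sup$ of $k \otimes^{\mathbf L} N$ is at most $-\sup$ of $N$ itself (tensoring with a module can only raise the cohomological supremum, equivalently lower the homological infimum of the ``width complex''); more precisely $\width_S(N) \le \width_S(k,N)$ since $N \otimes^{\mathbf L}_S k$ is a direct sum of shifts of $k$ built from $N \otimes^{\mathbf L}_S S = N$ by a Koszul-type argument on a generating set of $\mm$, which can only decrease the infimum of the homology. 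Therefore $\height(\pp) - \width_{R_\pp}(M_\pp) \le \height(\pp) - \width_R(\pp,M)$ termwise, giving the $\le$ direction of the displayed equality immediately.

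For the reverse inequality one cannot argue termwise, since $\width_{R_\pp}(M_\pp)$ can be strictly larger than $\width_R(\pp,M)$. Instead I would fix a prime $\pp$ realizing (or nearly realizing) the right-hand supremum and produce a prime $\qq$ with $\height(\qq) - \width_{R_\pp}(M_\pp)$ at least as large — the natural candidate being a minimal prime $\qq \subseteq \pp$ in the support of $R/\pp \otimes^{\mathbf L} M$, or rather a prime chosen so that $\width_{R_\qq}(M_\qq) = \width_R(\pp,M)$ while $\height(\qq)$ has not dropped too far. The cleanest route is via the standard identity $\width_S(k,N) = \inf_{\qq} \bigl(\width_{S_\qq}(N_\qq) + \dim(S/\qq)\bigr)$ over primes $\qq$ of $S$ — or its dual, the analogous formula for depth — applied with $S = R_\pp$; combining this with $\dim(S/\qq) \le \height(\pp) - \height(\qq)$ shows that some $\qq \subseteq \pp$ satisfies $\height(\qq) - \width_{R_\qq}(M_\qq) \ge \height(\pp) - \width_R(\pp,M)$, which is exactly what is needed. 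This is the step I expect to be the main obstacle: one must be careful that these ``width = inf over primes'' formulas hold for unbounded-but-cohomologically-bounded complexes over a noetherian local ring, and that the dimension inequality $\dim(R_\pp / \qq R_\pp) \le \height(\pp) - \height(\qq)$ is available (it holds since $R_\pp$ is local, though equality may fail without catenarity). Once both inequalities are in hand, the displayed equation follows.

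Finally, the ``as a consequence'' is immediate: if $R$ is Cohen-Macaulay then $\depth(R_\pp) = \height(\pp)$ for every $\pp$, so $\CH_R(M) = \sup\{\depth(R_\pp) - \width_{R_\pp}(M_\pp)\} = \sup\{\height(\pp) - \width(M_\pp)\}$, which by the displayed equality equals $\sup\{\height(\pp) - \width(\pp,M)\} = \rid_R(M)$ using formula~\cref{E:rid} together with $\grade_R(\pp) = \height(\pp)$ from \cref{ss:depth}. This completes the proof.
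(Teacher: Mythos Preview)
Your ``key starting point'' is false, and this error undermines the entire argument. You claim that $\width_R(\pp,M) = \width_{R_\pp}(\pp R_\pp, M_\pp)$, but by the paper's conventions $\width_{R_\pp}(\pp R_\pp, M_\pp) = -\sup\bigl(R_\pp/\pp R_\pp \otimes_{R_\pp}^{\mathbf L} M_\pp\bigr) = -\sup\bigl(k(\pp)\otimes_R^{\mathbf L} M\bigr) = \width(M_\pp)$, so your identity asserts exactly $\width(\pp,M) = \width(M_\pp)$, which the paper explicitly warns can fail even over Cohen--Macaulay rings. Your justification is backwards: the complex $R/\pp \otimes_R^{\mathbf L} M$ is supported on $V(\pp)$, the primes \emph{containing} $\pp$, so localizing at $\pp$ kills precisely the contributions from primes $\qq \supsetneq \pp$ --- it loses information rather than preserving it. Concretely, one can have $\Tor_i^R(R/\pp,M)\neq 0$ while $\Tor_i^R(k(\pp),M)=0$. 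Once this identity collapses, your ``easy direction'' reduces to a tautology and the setup for the hard direction dissolves; the ``standard identity'' $\width_S(k,N) = \inf_{\qq}\bigl(\width_{S_\qq}(N_\qq)+\dim(S/\qq)\bigr)$ you invoke is neither standard nor obviously true, and in any case relates $\width(M_\pp)$ to widths at smaller primes, which is the wrong direction for what you need.

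The paper's proof does not attempt to reduce $\width(\pp,M)$ to a computation over $R_\pp$. Instead, it fixes the bound $n$ given by the left-hand supremum and proves by backward induction on $\dim(R/\pp)$ that $\Tor_i^R(R/\pp,M)=0$ for all $i$ with $\height(\pp)-i>n$. The inductive step uses the exact sequence $0 \to R/\pp \to k(\pp) \to L \to 0$: the cokernel $L$ is supported on $V(\pp)\setminus\{\pp\}$, hence filtered by modules $R/\qq$ with $\height(\qq)>\height(\pp)$ and $\dim(R/\qq)<\dim(R/\pp)$, so the induction hypothesis kills $\Tor_{i+1}^R(L,M)$ and the long exact sequence gives the vanishing of $\Tor_i^R(R/\pp,M)$. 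This is the missing idea: one must compare $R/\pp$ and $k(\pp)$ directly via their quotient, not via localization.
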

\begin{proof}
  We always have $\width_R(\pp,M) \leq \width(M_\pp)$ \cite[Corollary 4.12]{CFF02} for any $\pp \in \Spec R$, and so the left-hand side is always smaller or equal to the right-hand side. In order to show the other inequality, we will prove that $\Tor_i^R(R/\pp,M) = 0$ whenever $\Tor_i^R(k(\pp),M) = 0$ by a backwards induction on $\dim(R/\pp)$. If $\dim(R/\pp) = 0$ then $\pp$ is a maximal ideal, so $R/\pp = k(\pp)$, and there is nothing to prove. The short exact sequence $0 \to R/\pp \to k(\pp) \to L \to 0$ induces a piece of the long exact sequence:
  $$\Tor_{i+1}^R(L,M) \to \Tor_i^R(R/\pp,M) \to \Tor_i^R(k(\pp),M).$$
  For each $\qq \in V(\pp) \setminus \{\pp\}$, we have $\height(\qq) > \height(\pp)$ and $\dim(R/\qq) < \dim(R/\pp)$, and so the induction hypothesis applies and yields $\Tor_{i+1}^R(R/\qq,M) = 0$. Since $\Supp(L) \subseteq V(\pp) \setminus \{\pp\}$, we have $\Tor_{i+1}^R(L,M) = 0$. Since $\Tor_i^R(k(\pp),M) = 0$ by the assumption on $M$, we are done by the exact sequence above.

  Now assume $R$ is Cohen-Macaulay. Then we have $\grade(\pp) = \depth(R_\pp) = \height(\pp)$ for all $\pp \in \Spec R$, and so the claim implies the left-hand side is equal to $\CH_R(M)$ and the right-hand side to $\rid_R(M)$, see \cref{E:rid}.
\end{proof}
\begin{rmk}
  \cref{CM-rid} can fail for a non-Cohen-Macaulay ring, and in fact there is no inequality between $\CH_R$ and $\rid_R$ in general.
  
  By \cite[Corollary 5.9]{CFF02}, if $R$ is a local ring such that $\dim(R) > \depth R +1$, then there is a module $M$ with $\id_R(M) = \dim(R) - 1$, so $\id_R(M) = \CH_R(M)$, but $\rid_R(M) < \id_R(M)$. On the other hand, it can also happen that $\CH_R(M) < \rid_R(M)$. Indeed, let $(R,\mm)$ be a 1-dimensional local ring which is not Cohen-Macaulay, and let $M$ be an $R$-module satisfying $\Supp(M) = \{\mm\}$ and $R/\mm \otimes_R M = 0$. One can always take $M$ to be the first local cohomology module $H_\mm^1(R)$ of $R$, see \cref{ss:filtration}. Then $\CH_R(M) = \depth(R) + \sup(R/\mm \otimes_R^\mathbf{L} M) < 0$, while $\rid_R(M)$ is always non-negative whenever $M \neq 0$.
  
  Similarly, it can happen that $\CH_R(M) \neq \Rid_R(M)$, see \cite[Remark 5.12]{CFF02}.
\end{rmk}
Combining \cref{E:Tgrade} with \cref{CM-rid}, the minimal tilting class $\Tcal_{\height}$ can be described using the Chouinard invariant. 
\begin{cor}\label{T-descr}
  Let $R$ be a Cohen-Macaulay ring of finite Krull dimension. We have $\Tcal_{\height} = \{M \in \Mod R \mid \CH_R(M) \leq 0\}$.
\end{cor}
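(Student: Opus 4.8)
The plan is to assemble the corollary directly from \cref{E:Tgrade} and \cref{CM-rid}, with one preliminary identification of tilting classes. First I would recall, as already noted at the start of \cref{s:tiltingcm}, that the Cohen-Macaulay hypothesis forces $\grade_R(\pp) = \depth(R_\pp) = \height(\pp)$ for every $\pp \in \Spec R$ (see \cref{ss:depth}), so that the grade function and the height function on $\Spec R$ agree. By the classification recalled in \cref{ss:classification} and the discussion of \cref{ss:mintilting,ss:rid}, this means the minimal tilting class $\Tcal_{\mathsf{min}} = \Tcal_{\grade}$ is literally the class $\Tcal_{\height}$ appearing in the statement.

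Next I would invoke \cref{E:Tgrade}, which identifies this class as $\Tcal_{\height} = \Tcal_{\grade} = \{M \in \Mod R \mid \rid_R(M) \leq 0\}$. Hence it only remains to check that, for an $R$-module $M$, the conditions $\rid_R(M) \leq 0$ and $\CH_R(M) \leq 0$ are equivalent. But a module $M$, viewed as a complex concentrated in cohomological degree $0$, is cohomologically bounded, so \cref{CM-rid} applies and gives the sharper statement $\rid_R(M) = \CH_R(M)$. Substituting this equality into the description of $\Tcal_{\height}$ from the previous step yields $\Tcal_{\height} = \{M \in \Mod R \mid \CH_R(M) \leq 0\}$, as desired.

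There is no real obstacle in this argument: it is pure bookkeeping, and the only point worth flagging is that \cref{CM-rid} is stated for cohomologically bounded complexes and thus in particular covers ordinary modules. Accordingly I would keep the proof to just a few lines, essentially the three sentences above, and I would not expect to need anything beyond the results already established in \cref{s:preliminaries} and in \cref{CM-rid} itself.
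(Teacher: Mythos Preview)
Your proposal is correct and follows exactly the same route as the paper: the paper's proof is a one-line remark that combines \cref{E:Tgrade} with \cref{CM-rid}, which is precisely what you do after noting $\Tcal_{\grade} = \Tcal_{\height}$ in the Cohen-Macaulay case.
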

\subsection{} The tilting module inducing the minimal tilting class $\Tcal_{\height}$ has been explicitly constructed in \cite{HNS}. Let $\RGamma_{\pp}: \D(R) \to \D(R)$ denote the local cohomology functor associated to the support $V(\pp) \subseteq \Spec R$. For each prime ideal $\pp$ let us fix the notation $T(\pp) = H_\pp^{\height(\pp)}(R_\pp)$; here we use the standard symbol $H_\pp^{i}(M) = H^i\RGamma_\pp(M)$ for the $i$-th local cohomology at $\pp$. Recall that $T(\pp)$ is isomorphic to $\RGamma_\pp R_\pp [\height(\pp)]$ in $\D(R)$ when $R$ is Cohen-Macaulay, see e.g. \cite[Theorem 10.35]{24h}.
\begin{theorem}\cite[Corollary 4.9, Remark 2.7]{HNS}
  Let $R$ be a Cohen-Macaulay ring of finite Krull dimension. The module $T_{\height} = \bigoplus_{\pp \in \Spec R}T(\pp)$ is a tilting module inducing the minimal tilting cotorsion pair $(\Acal_{\height},\Tcal_{\height})$. 
\end{theorem}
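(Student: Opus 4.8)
The plan is to verify the tilting axioms (T1)--(T3) for $T_{\height}$ and then to compute its tilting class, identifying it with $\Tcal_{\height}$. Throughout write $d=\dim(R)$; since $R$ is Cohen--Macaulay, each $R_\pp$ is a Cohen--Macaulay local ring of dimension $\height(\pp)$, so by the recalled isomorphism $T(\pp)\cong\RGamma_\pp R_\pp[\height(\pp)]$ the module $T(\pp)$ is a single $R_\pp$-module, $\pp$-power torsion over $R$, with $\Supp_R T(\pp)=\{\pp\}$.

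\emph{Axioms (T1) and (T3).} For (T1), lift a system of parameters of $R_\pp$ to $R$: then $\RGamma_\pp R_\pp$ is represented by the associated extended \v{C}ech complex, a bounded complex of localizations of $R_\pp$, hence of flat $R$-modules. So $\fd_R T(\pp)\le d$, whence $T(\pp)\in\Fcal_d=\Pcal$ by \cref{ss:RG}, and $T_{\height}\in\Pcal$ after passing to the coproduct. For (T3), I would invoke the Cousin complex of $R$ associated with the height filtration of $\Spec R$: its $i$-th term is $\bigoplus_{\height(\pp)=i}H^i_\pp(R_\pp)\in\Add(T_{\height})$, it terminates in degree $d$, and---this is one of the classical characterizations of Cohen--Macaulay rings---it is exact, producing the $\Add(T_{\height})$-coresolution $0\to R\to\bigoplus_{\height(\pp)=0}T(\pp)\to\cdots\to\bigoplus_{\height(\pp)=d}T(\pp)\to0$ required by (T3).

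\emph{The tilting class, and (T2).} It remains to prove $T_{\height}\Perp{}=\Tcal_{\height}$; granting this, the inclusion $\Add(T_{\height})\subseteq\Tcal_{\height}$ gives (T2), so $T_{\height}$ is a tilting module inducing $(\Acal_{\height},\Tcal_{\height})$. That $\Add(T_{\height})\subseteq\Tcal_{\height}$ follows from a short width computation: $\width_R(\pp,T(\qq))=\infty$ unless $\pp\subseteq\qq$, and when $\pp\subseteq\qq$ one localizes at $\qq$ and uses $R_\qq/\pp R_\qq\otimes_{R_\qq}^{\mathbf L}\RGamma_\qq R_\qq\simeq\RGamma_\qq(R_\qq/\pp R_\qq)$ together with $\dim(R_\qq/\pp R_\qq)=\height(\qq)-\height(\pp)$ to get $\width_R(\pp,T(\qq))=\height(\pp)$; hence $\width_R(\pp,T_{\height})=\height(\pp)$, so $T_{\height}\in\Tcal_{\height}$, and $\Tcal_{\height}$ is closed under $\Add$. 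For the equality $T_{\height}\Perp{}=\Tcal_{\height}$ I would compute $\RHom_R(T(\pp),M)$ for an arbitrary module $M$: since $T(\pp)$ is $\pp$-power torsion, Greenlees--May (MGM) duality replaces $\RHom_R(T(\pp),-)$ by $\RHom$ into the derived $\pp$-adic completion; because derived completion at $\pp$ factors through localization at $\pp$, one is reduced---using that $\RGamma_\pp R_\pp$ is the telescope of Koszul complexes on a system of parameters---to an isomorphism
$$\RHom_R(T(\pp),M)\;\simeq\;\mathbf{L}\Lambda^{\pp}(M_\pp)[-\height(\pp)]$$
in $\D(R_\pp)$, where $\mathbf{L}\Lambda^{\pp}$ denotes derived $\pp$-adic completion over $R_\pp$. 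Thus $\Ext_R^{>0}(T(\pp),M)=0$ exactly when $\mathbf{L}\Lambda^{\pp}(M_\pp)$ is concentrated in cohomological degrees $\le-\height(\pp)$, which, by the identity $\sup\mathbf{L}\Lambda^{\pp}(M_\pp)=\sup\bigl(k(\pp)\otimes_{R_\pp}^{\mathbf L}M_\pp\bigr)=-\width_{R_\pp}(M_\pp)$, amounts to $\width_{R_\pp}(M_\pp)\ge\height(\pp)$. Ranging over all $\pp$ and applying \cref{CM-rid} (cf. \cref{T-descr}), $M\in T_{\height}\Perp{}$ if and only if $\rid_R(M)\le0$, i.e. $M\in\Tcal_{\height}$.

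\emph{Expected obstacle.} The \v{C}ech description behind (T1), the exactness of the Cousin complex behind (T3), and the width computations are routine in view of \cref{s:preliminaries} and standard facts about Cohen--Macaulay rings. The real work is the isomorphism $\RHom_R(T(\pp),M)\simeq\mathbf{L}\Lambda^{\pp}(M_\pp)[-\height(\pp)]$: one must handle the discrepancy between $\RHom_R(R_\pp,-)$ and localization (which coincide only after completing), apply Greenlees--May duality over both $R$ and $R_\pp$, and control the cohomological amplitude of derived completion in order to convert the $\Ext$-vanishing into the width inequality. This is where the bulk of the argument lies.
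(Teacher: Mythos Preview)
The paper does not prove this theorem; it quotes it from \cite[Corollary~4.9, Remark~2.7]{HNS} and the subsequent remark explains that the self-orthogonality (T2) is the genuine obstacle, handled in \cite{HNS} either via the Iyengar--Krause completion of Grothendieck duality when a dualizing complex exists, or via a technical transfinite-cofiltration argument in general. Your strategy for (T1) and (T3) is fine, but your route to (T2) and the identification of the tilting class has a real gap.

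The isomorphism $\RHom_R(T(\pp),M)\simeq\mathbf{L}\Lambda^{\pp}(M_\pp)[-\height(\pp)]$ is false. What Greenlees--May actually gives, after writing $T(\pp)\cong \RGamma_\pp R \otimes_R^{\mathbf L} R_\pp[\height(\pp)]$, is
\[
\RHom_R(T(\pp),M)\;\simeq\;\mathbf{L}\Lambda^{\pp}_R\bigl(\RHom_R(R_\pp,M)\bigr)[-\height(\pp)],
\]
and the colocalization $\RHom_R(R_\pp,M)$ does \emph{not} agree with $M_\pp$ even after derived $\pp$-completion. A concrete counterexample: take $R=k[x]_{(x)}$, $\pp=(0)$, $M=R$. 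Then $T((0))=k(x)$ and from the triangle $\RGamma_{(x)}R\to R\to k(x)$ one computes $\Ext_R^1(k(x),R)\cong\widehat R/R\neq 0$, whereas your formula would give $\mathbf{L}\Lambda^{0}_{k(x)}(k(x))=k(x)$ in degree~$0$. In particular your prime-by-prime equivalence ``$\Ext_R^{>0}(T(\pp),M)=0\iff\width_{R_\pp}(M_\pp)\ge\height(\pp)$'' already fails here, even though the global equality $T_{\height}\Perp{}=\Tcal_{\height}$ is true: the vanishing at different primes interacts, and one cannot decouple them in the way you propose. This is precisely why \cite{HNS} does not argue via a direct $\RHom$ computation but instead establishes (T2) by the methods sketched in the paper's remark.
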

\begin{rmk}
  Let $E(R/\pp)$ denote the indecomposable injective over $\pp \in \Spec R$. Then we have $T(\pp) \cong E(R/\pp)$ if and only if $R_\pp$ is a Gorenstein ring. Therefore, if $R$ is (locally) Gorenstein then $T_{\height} \cong \bigoplus_{\pp \in \Spec R}E(R/\pp)$, recovering \cite[Example 5.7]{AH13}. In this case, it is known that $\Tcal_{\height}$ is precisely the class $\Gcal\Ical_0$ of Gorenstein injective $R$-modules and $\Acal_{\height} = \Pcal$ \cite[Example 9.3]{BH09}.

  If $R$ is not Gorenstein, then it is more difficult to check that $\bigoplus_{\pp \in \Spec R}T(\pp)$ is a tilting module. The main problem is to check the self-orthogonality condition (T2) of \cref{ss:tilting} here, which is trivial in the Gorenstein case. This was done in a larger generality in \cite[Theorem 1.1]{HNS}, see \cite[Remark 4.11]{HNS} for the Cohen-Macaulay case relevant for us. We remark that if $R$ admits a dualizing module, then checking condition (T2) can be reduced to Ext-orthogonality of injectives using the infinite completion of Grothendieck duality due to Iyengar and Krause \cite{IK06}, this is explained in \cite[\S 3]{HNS}. In the absence of a dualizing module, a more technical proof is required \cite[\S 4]{HNS}, although the most difficult argument using transfinite cofiltrations can be skipped under the assumption of $\dim(R) < \infty$.
\end{rmk}
\subsection{Canonical filtration}\label{ss:filtration} Our first step is to prove a deconstruction result for modules in $\Tcal_{\height}$ which extends the canonical filtrations of Gorenstein injectives over Gorenstein rings due to Enochs and Huang \cite{E11}. Here, $R$ is assumed to be a Cohen-Macaulay ring of finite Krull dimension.

\begin{lemma}\label{unique-tor}
  If $M \in \Tcal_{\height}$ then $\Tor_i^R(T(\pp),M) = 0$ whenever $i \neq \height(\pp)$ for all $\pp \in \Spec R$.
\end{lemma}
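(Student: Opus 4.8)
The plan is to translate the tilting-class membership of $M$ into a vanishing statement about $\Tor$ against the residue fields $k(\qq)$, and then feed this into the local cohomology computation of $T(\pp)$. First I would recall that by \cref{ss:classification} (with $\ef = \height$, which is the characteristic function inducing $\Tcal_{\height}$), membership $M \in \Tcal_{\height}$ is equivalent to $\width_R(\qq, M) \geq \height(\qq)$ for all $\qq \in \Spec R$, i.e.\ $-\sup(R/\qq \otimes_R^{\mathbf L} M) \geq \height(\qq)$, which means $\Tor_i^R(R/\qq, M) = 0$ for all $i < \height(\qq)$. By the backwards-induction argument from the proof of \cref{CM-rid} (the short exact sequence $0 \to R/\qq \to k(\qq) \to L \to 0$ with $\Supp L \subseteq V(\qq) \setminus \{\qq\}$), the same vanishing holds with $R/\qq$ replaced by $k(\qq)$: that is, $\Tor_i^R(k(\qq), M) = 0$ for all $i < \height(\qq)$ and all $\qq \in \Spec R$.

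Next I would compute $\Tor_\bullet^R(T(\pp), M)$ using $T(\pp) = H_\pp^{\height(\pp)}(R_\pp) \cong \RGamma_\pp R_\pp[\height(\pp)]$ in $\D(R)$ (valid since $R$ is Cohen-Macaulay). Writing $h = \height(\pp)$ and $d = \dim R$, after localizing at $\pp$ we may assume $(R,\pp)$ is local of dimension $h$. The key tool is that $\RGamma_\pp R$ is represented by the \v{C}ech (stable Koszul) complex on a generating sequence $x_1,\dots,x_r$ of $\pp$, whose terms are localizations of $R$; tensoring with $M$ and taking homology, $H^{-i}(\RGamma_\pp R \otimes_R^{\mathbf L} M) = H_\pp^i(M)$. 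So $\Tor_i^R(T(\pp), M) = H_\pp^{h - i}(M_\pp)$, and I want this to vanish for $i \neq h$, i.e.\ I want $H_\pp^{j}(M_\pp) = 0$ for all $j \neq 0$. Working over the local ring $R_\pp$ of dimension $h$, the top vanishing $H_\pp^j = 0$ for $j > h$ is Grothendieck's theorem. For the bottom range $0 < j < h$ (and $j < 0$, which is automatic), I would use the convergent spectral sequence relating Koszul homology to local cohomology, or more directly the fact that for a system of parameters $\underline{x}$ one has $H_\pp^j(M_\pp) = \varinjlim_t H_j(\underline{x}^t; M_\pp)$ up to reindexing, combined with the vanishing $\Tor_i^{R}(k(\pp), M) = 0$ for $i < h$ propagated along the Koszul filtration — equivalently, $\depth_{R_\pp}(\pp R_\pp, M_\pp) = -\sup(k(\pp)\otimes^{\mathbf L}_{R_\pp} M_\pp) \ge h$ forces the local cohomology of $M_\pp$ to be concentrated in the single degree $h$ (this is exactly the statement that $\depth = \dim$ for the complex $M_\pp$ over the $h$-dimensional ring $R_\pp$). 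Thus $H^j_\pp(M_\pp)$ can only be nonzero for $j = h$, giving $\Tor_i^R(T(\pp),M) = H_\pp^{h-i}(M_\pp) = 0$ for $i \neq h$.

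The main obstacle I expect is the careful bookkeeping in the bottom-range vanishing $H^j_\pp(M_\pp) = 0$ for $0 \le j < h$: one must be sure the passage from "$\Tor_i^R(k(\qq),M) = 0$ for $i < \height(\qq)$ for all $\qq$" to "the local cohomology of $M_\pp$ is concentrated in top degree over $R_\pp$" is legitimate for a possibly unbounded-below complex $M$ (here $M$ is just cohomologically bounded, not a module). The cleanest route is to invoke the standard identity $\inf \RGamma_\pp(M_\pp) = \depth_{R_\pp}(\pp R_\pp, M_\pp)$ together with $\sup \RGamma_\pp(M_\pp) \le \dim R_\pp + \sup M_\pp$ (finiteness of cohomological dimension of local cohomology over a noetherian ring), so that $\RGamma_\pp(M_\pp)$ is cohomologically bounded and the depth sensitivity argument applies; then the hypothesis $\width_R(\pp,M) \ge \height(\pp)$ read at the prime $\pp$ itself gives $\depth_{R_\pp}(\pp R_\pp, M_\pp) \ge h$ after passing to the local ring, pinning the homology of $\RGamma_\pp(M_\pp)$ to a single degree. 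Everything else — the identification $T(\pp) \cong \RGamma_\pp R_\pp[h]$ and the resulting $\Tor$ computation — is a direct unwinding of definitions.
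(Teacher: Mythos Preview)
Your translation $\Tor_i^R(T(\pp),M) = H_\pp^{h-i}(M_\pp)$ and the resulting goal ``$H_\pp^j(M_\pp) = 0$ for all $j \neq 0$'' are both correct. But you then contradict yourself: you assert that the width bound (which you mislabel as $\depth_{R_\pp}(\pp R_\pp,M_\pp)$ while writing the formula for $\width$) forces the local cohomology of $M_\pp$ to be ``concentrated in the single degree $h$'', and from this you conclude $\Tor_i = 0$ for $i \neq h$. Concentration in degree $h$ would give $H_\pp^{h-i}(M_\pp) \neq 0$ only when $h-i = h$, i.e.\ $\Tor_i = 0$ for $i \neq 0$, which is the wrong statement. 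Depth sensitivity (which is what ``$\depth = \dim$'' invokes) controls $\inf \RGamma_\pp$, whereas what you actually need is $\sup \RGamma_\pp(M_\pp) \leq 0$. So the final deduction does not go through as written.

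The correct argument is already implicit in your Koszul remark and is essentially the paper's proof. The module $T(\pp)$ (equivalently, each $R_\pp/(\underline{x}^t)$ in the Koszul colimit) is supported at $\{\pp R_\pp\}$ and hence filtered by copies of $k(\pp)$; since $\Tor_i^{R_\pp}(k(\pp),M_\pp) = 0$ for $i < h$ (this is $\width_{R_\pp}(M_\pp) \geq h$, which follows from $M \in \Tcal_{\height}$ via \cref{T-descr}), one gets $\Tor_i^R(T(\pp),M) = 0$ for all $i < h$ directly --- in your picture, $H_\pp^j(M_\pp) = 0$ for all $j > 0$, including the case $j = h$ that your range $0 < j < h$ omitted. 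For $i > h$ the paper simply uses $\fd_{R_\pp}T(\pp) \leq \dim R_\pp = h$; in your picture this is the automatic vanishing $H_\pp^j(M_\pp) = 0$ for $j < 0$. Once the indexing and the depth/width confusion are repaired, your argument and the paper's coincide; the detour through local cohomology adds no independent input.
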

\begin{proof}
  Since $T(\pp)$ is an $R_\pp$-module supported on $\{\pp R_\pp\}$, it is filtered by copies of $k(\pp)$, and thus we have the vanishing $\Tor_i^R(T(\pp),M) = 0$ for all $i< \height(\pp)$ by \cref{T-descr}. Since $T(\pp)$ is an $R_\pp$-module of finite flat dimension, we have $\fd_{R_\pp} T(\pp) \leq \dim(R_\pp) = \height(\pp)$, see \cref{ss:RG}. As $\Tor_i^R(T(\pp),M) = \Tor_i^{R_\pp}(T(\pp),M_\pp)$, we obtain the vanishing for $i> \height(\pp)$.
\end{proof}
\begin{lemma}\label{Mp}
  For any $M \in \Tcal_{\height}$, $\RGamma_{\pp}(M_\pp)$ is isomorphic in $\D(R)$ to an $R$-module $M(\pp) \in \Tcal_{\height}$.
\end{lemma}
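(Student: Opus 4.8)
The plan is to pin down the complex $\RGamma_\pp(M_\pp)$ explicitly and then feed the resulting module into the Chouinard-invariant description of $\Tcal_{\height}$ from \cref{T-descr}. Write $\mm=\pp R_\pp$. Since $M_\pp$ is already an $R_\pp$-module, computing local cohomology by the stable Koszul (Čech) complex on a generating set of $\pp$ shows that the $R$-linear functor $\RGamma_\pp$ and the $R_\pp$-linear functor $\RGamma_\mm$ agree on $M_\pp$, and the same description yields the projection formula $\RGamma_\mm(X)\simeq\RGamma_\mm(R_\pp)\otimes^{\mathbf L}_{R_\pp}X$ for every $X\in\D(R_\pp)$.

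\emph{Step one: identify $\RGamma_\pp(M_\pp)$ with a module.} Because $R$ is Cohen-Macaulay we have $\RGamma_\mm(R_\pp)\cong T(\pp)[-\height(\pp)]$ in $\D(R_\pp)$, so
$$\RGamma_\pp(M_\pp)\;\simeq\;\RGamma_\mm(R_\pp)\otimes^{\mathbf L}_{R_\pp}M_\pp\;\simeq\;\bigl(T(\pp)\otimes^{\mathbf L}_{R_\pp}M_\pp\bigr)[-\height(\pp)].$$
The degree-$n$ cohomology of the right-hand side is $\Tor^{R_\pp}_{\height(\pp)-n}(T(\pp),M_\pp)=\Tor^{R}_{\height(\pp)-n}(T(\pp),M)$, which by \cref{unique-tor} vanishes for $n\neq 0$. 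Hence $\RGamma_\pp(M_\pp)$ is isomorphic in $\D(R)$ to the module $M(\pp):=\Tor^R_{\height(\pp)}(T(\pp),M)$. Moreover $M(\pp)$ is a $\pp R_\pp$-power-torsion $R_\pp$-module (it is a subquotient of $T(\pp)\otimes_{R_\pp}F_\bullet$ for a free $R_\pp$-resolution $F_\bullet$ of $M_\pp$, and $T(\pp)$ is torsion), so $\Supp_R M(\pp)\subseteq V(\pp)$, and $M(\pp)$ is canonically an $R_\qq$-module with $M(\pp)_\qq=M(\pp)$ whenever $\pp\subseteq\qq$.

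\emph{Step two: show $M(\pp)\in\Tcal_{\height}$.} By \cref{T-descr} it suffices to check $\width_{R_\qq}(M(\pp)_\qq)\geq\height(\qq)=\depth R_\qq$ for all $\qq\in\Spec R$. If $\pp\not\subseteq\qq$ then $M(\pp)_\qq=0$ and there is nothing to prove. If $\pp\subseteq\qq$, base change along $R\to R_\pp$ gives $k(\qq)\otimes^{\mathbf L}_{R_\qq}M(\pp)\simeq k(\qq)_\pp\otimes^{\mathbf L}_{R_\pp}M(\pp)$, and $k(\qq)_\pp=0$ whenever $\pp\subsetneq\qq$, so again the width is $+\infty$ there. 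The only remaining case is $\qq=\pp$: since $k(\pp)$ is $\mm$-torsion we get $\RGamma_\mm(k(\pp))\simeq k(\pp)$, whence
$$k(\pp)\otimes^{\mathbf L}_{R_\pp}M(\pp)\;\simeq\;k(\pp)\otimes^{\mathbf L}_{R_\pp}\RGamma_\mm(M_\pp)\;\simeq\;\RGamma_\mm(k(\pp))\otimes^{\mathbf L}_{R_\pp}M_\pp\;\simeq\;k(\pp)\otimes^{\mathbf L}_{R_\pp}M_\pp,$$
so $\width_{R_\pp}(M(\pp))=\width_{R_\pp}(M_\pp)$. Finally $M\in\Tcal_{\height}=\Tcal_{\grade}$ forces $\rid_R(M)\leq 0$, hence $\width_R(\pp,M)\geq\grade(\pp)=\height(\pp)$ by \cref{E:rid}; combined with $\width_R(\pp,M)\leq\width_{R_\pp}(M_\pp)$ (\cite[Corollary 4.12]{CFF02}) this gives $\width_{R_\pp}(M(\pp))\geq\height(\pp)$, completing the verification.

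I expect the main difficulty to be organisational rather than conceptual: one must carefully track which base ring each derived tensor product and each local cohomology functor lives over, and arrange the width computation so that it collapses, as above, to the single prime $\qq=\pp$ — the only place where the hypothesis $M\in\Tcal_{\height}$ is used. The one spot where Cohen-Macaulayness of $R$ is genuinely needed is the identification $\RGamma_\mm(R_\pp)\cong T(\pp)[-\height(\pp)]$, and that identity should be quoted rather than reproved.
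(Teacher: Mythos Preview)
Your proof is correct and follows essentially the same strategy as the paper: identify $\RGamma_\pp(M_\pp)\simeq T(\pp)[-\height(\pp)]\otimes_R^{\mathbf L}M$, invoke \cref{unique-tor} to see this is a module, and then verify membership in $\Tcal_{\height}$ by computing $k(\qq)\otimes_R^{\mathbf L}M(\pp)$, which collapses to the single prime $\qq=\pp$. The paper compresses your case analysis into the one line $M(\pp)\otimes_R^{\mathbf L}k(\qq)\cong M\otimes_R^{\mathbf L}\RGamma_\pp R_\pp\otimes_R^{\mathbf L}k(\qq)$, and at $\qq=\pp$ reads off the Tor-vanishing directly from \cref{T-descr} rather than routing through $\rid_R$ and the inequality $\width_R(\pp,M)\leq\width_{R_\pp}(M_\pp)$; but this is a stylistic difference only.
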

\begin{proof}
  By \cref{unique-tor}, $\RGamma_{\pp}(M_\pp) \cong T(\pp)[-\height(\pp)] \otimes_R^\mathbf{L} M$ is isomorphic in $\D(R)$ to an $R$-module $M(\pp)$ in $\D(R)$, it remains to show that $M(\pp) \in \Tcal$. This follows from \cref{T-descr}, because $M(\pp) \otimes_R^\mathbf{L} k(\qq) \cong M \otimes_R^\mathbf{L} \RGamma_\pp R_\pp \otimes_R^\mathbf{L} k(\qq)$ is equal to zero if $\qq \neq \pp$ or to $M \otimes_R^\mathbf{L} k(\pp)$ if $\qq = \pp$, and so $\Tor_i^R(k(\pp),M(\pp)) = 0$ for any $i<\height(\pp)$ using $M \in \Tcal$.
\end{proof}
Let $W \subseteq \Spec R$ be a specialization closed subset, then the local cohomology with support on $W$ is the right derived functor $\RGamma_W(X): \D(R) \to \D(R)$ of the torsion functor $\Gamma_W: \Mod R \to \Mod R$ with respect to the hereditary torsion class $\{M \in \Mod R \mid \Supp(M) \subseteq W\}$. It follows that $\RGamma_W$ is the Bousfield localization functor away from the localizing subcategory $\{X \in \D(R) \mid \supp(M) \subseteq W\}$, where $\supp(M) = \{\pp \in \Spec R \mid k(\pp) \otimes_R^\mathbf{L} M \neq 0\}$ is the cohomological support. If $W_1 \subseteq W_0$ are two specialization closed subset then there is a canonical triangle for any $X \in \D(R)$: $\RGamma_{W_1}X \to \RGamma_{W_0}X \to X' \xrightarrow{+}$, where $\supp(X') \subseteq W_0 \setminus W_1$. Now assume that $\dim(W_0 \setminus W_1) \leq 0$, or in other words, there are no $\pp,\qq \in W_0 \setminus W_1$ such that $\pp \subsetneq \qq$. Then it follows that the object $X'$ from the triangle above is of the form $X' = \bigoplus_{\pp \in W_0 \setminus W_1}\RGamma_\pp X_\pp$. For details, see e.g. \cite[Remark 4.2]{HNS}.
\begin{theorem}\label{filtration}
  Let $R$ be a Cohen-Macaulay ring of finite Krull dimension $d$. Any module $M \in \Tcal_{\height}$ admits a filtration $0 = M_{d+1} \subseteq M_d \subseteq M_{d-1} \subseteq M_{d-2} \subseteq \cdots \subseteq M_0 = M$ such that $M_i/M_{i+1}$ is isomorphic to a direct sum $\bigoplus_{\height(\pp) = i}M(\pp)$, where $M(\pp)$ are the $\pp$-torsion and $\pp$-local modules belonging to $\Tcal_{\height}$ of \cref{Mp}.
\end{theorem}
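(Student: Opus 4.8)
The plan is to realize the filtration as the local-cohomology stratification of $M$ along height. For $0 \leq i \leq d+1$ put $W_i = \{\pp \in \Spec R \mid \height(\pp) \geq i\}$; this is a specialization closed subset of $\Spec R$, since heights only increase along specializations, and we obtain a descending chain $\emptyset = W_{d+1} \subseteq W_d \subseteq \cdots \subseteq W_1 \subseteq W_0 = \Spec R$. Since $\dim R = d < \infty$, a strict inclusion $\pp \subsetneq \qq$ forces $\height(\qq) > \height(\pp)$, so $W_i \setminus W_{i+1} = \{\pp \mid \height(\pp) = i\}$ contains no comparable pair, i.e.\ $\dim(W_i \setminus W_{i+1}) \leq 0$, which is exactly the hypothesis needed for the structure statement recalled immediately before the theorem. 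I would then set $M_i := \Gamma_{W_i}(M)$, the submodule of $M$ consisting of all elements whose support lies in $W_i$; these visibly form a descending chain of submodules of $M$ with $M_{d+1} = 0$ and $M_0 = M$.

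The crux is to prove, by descending induction on $i$, that $\RGamma_{W_i}(M)$ is concentrated in cohomological degree $0$; granting this, $\RGamma_{W_i}(M) \cong H^0(\RGamma_{W_i}(M)) = \Gamma_{W_i}(M) = M_i$ in $\D(R)$ by left-exactness of $\Gamma_{W_i}$. The base case $\RGamma_{W_{d+1}}(M) = \RGamma_{\emptyset}(M) = 0$ is clear. For the inductive step, consider the canonical triangle $\RGamma_{W_{i+1}}(M) \to \RGamma_{W_i}(M) \to X' \xrightarrow{+}$. Because $\dim(W_i \setminus W_{i+1}) \leq 0$, the recalled structure result gives $X' \cong \bigoplus_{\height(\pp) = i} \RGamma_{\pp}(M_\pp)$, and \cref{Mp} identifies each $\RGamma_{\pp}(M_\pp)$ with the module $M(\pp) \in \Tcal_{\height}$; in particular $X'$ is concentrated in degree $0$. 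Feeding this, together with the inductive hypothesis on $\RGamma_{W_{i+1}}(M)$, into the long exact cohomology sequence of the triangle forces $\RGamma_{W_i}(M)$ to be concentrated in degree $0$ as well, and taking $H^0$ turns the triangle into a short exact sequence $0 \to M_{i+1} \to M_i \to \bigoplus_{\height(\pp) = i} M(\pp) \to 0$. Functoriality of $\RGamma_{(-)}$ in the specialization closed set shows that the left-hand map is the inclusion $\Gamma_{W_{i+1}}(M) \subseteq \Gamma_{W_i}(M)$, so this short exact sequence is exactly the assertion $M_i/M_{i+1} \cong \bigoplus_{\height(\pp)=i} M(\pp)$.

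I do not expect a genuine obstacle here: the geometric ingredients ($W_i$ specialization closed, the antichain property of $W_i \setminus W_{i+1}$, the triangle and the description of $X'$) are elementary or quoted from the discussion before the theorem, and the homological core is the degree-$0$ concentration argument, which is just bookkeeping of long exact sequences once one knows that $\RGamma_{\pp}(M_\pp)$ is a module for every $\pp$. That last fact is precisely where the hypothesis $M \in \Tcal_{\height}$ enters — via the $\Tor$-vanishing of \cref{unique-tor} feeding \cref{Mp} — and it then propagates up the stratification. The only point that needs a little care is matching the connecting map $M_{i+1} \to M_i$ coming from the triangle with the honest submodule inclusion, which follows from the naturality of the colocalization triangle in $W_{i+1} \subseteq W_i$.
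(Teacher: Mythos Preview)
Your proof is correct and follows essentially the same approach as the paper: both stratify $M$ by the specialization closed subsets $W_i = \{\pp \mid \height(\pp) \geq i\}$, use the canonical triangle for $W_{i+1} \subseteq W_i$ together with \cref{Mp} to see that the third term is a module, and conclude by descending induction that each $\RGamma_{W_i}(M)$ is a module fitting into the required short exact sequence. Your version is slightly more explicit in identifying $M_i = \Gamma_{W_i}(M)$ and in checking that the map $M_{i+1} \to M_i$ is the inclusion, but the argument is the same.
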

\begin{proof}
  Let $W_k = \{\pp \in \Spec R \mid \height(\pp) \geq k\}$. We prove by a backward induction on $k = d,d-1,\ldots,0$ that $\RGamma_{W_k}M$ is (quasi-isomorphic to) a module admitting a filtration $0 = M_{d+1} \subseteq M_d \subseteq M_{d-1} \subseteq M_{d-2} \subseteq \cdots \subseteq M_k = \RGamma_{W_k}M$ such that $M_i/M_{i+1}$ is isomorphic to a direct sum $\bigoplus_{\pp \in \Spec R, \height(\pp) = i}M(\pp)$.
  
  If $k=d$, then $\RGamma_{W_d} M$ is supported only on maximal ideals and thus already $\RGamma_{W_k} M = \bigoplus_{\pp \in \Spec R, \height(\pp) = d}M(\pp)$. In the induction step, consider the triangle $\RGamma_{W_{k+1}} M \to \RGamma_{W_k} M \to M' \xrightarrow{+}$. As explained in the paragraph above, we have $M' = \bigoplus_{\pp, \height(\pp) = k} \RGamma_{\pp} M_\pp \cong \bigoplus_{\pp, \height(\pp) = k}M(\pp)$. Then all three components of the triangle are modules, thus the triangle is in fact induced by a short exact sequence of modules, which finishes the induction.
\end{proof}

\begin{rmk}
  If $R$ is a Gorenstein ring then \cref{filtration} recovers the canonical filtration of Gorenstein injectives result of Enochs and Huang \cite[Theorem 3.1]{E11}.
\end{rmk}

\subsection{Product-completeness of $T$}
\begin{prop}\label{AddT}
  We have $\Acal_{\height} \cap \Tcal_{\height} = \Add(T)$ is equal to $\Pcal \cap \Tcal_{\height}$.
\end{prop}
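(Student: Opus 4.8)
The equality $\Acal_{\height}\cap\Tcal_{\height}=\Add(T)$ is the general tilting fact recalled in \cref{ss:tilting}, so it remains to identify this intersection with $\Pcal\cap\Tcal_{\height}$. The inclusion $\Add(T)\subseteq\Pcal\cap\Tcal_{\height}$ is immediate: $T\in\Pcal$ by (T1) and $\Pcal$ is closed under coproducts and direct summands, while $\Add(T)\subseteq T\Perp{}=\Tcal_{\height}$ by (T2). The content is therefore the reverse inclusion $\Pcal\cap\Tcal_{\height}\subseteq\Acal_{\height}$; note that once a module is shown to lie in $\Acal_{\height}$, membership in $\Tcal_{\height}$ places it in $\Acal_{\height}\cap\Tcal_{\height}=\Add(T)$.

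Let $M\in\Pcal\cap\Tcal_{\height}$. The plan is to reduce to the local torsion pieces via the canonical filtration. By \cref{filtration}, $M$ carries a finite filtration whose subquotients are coproducts $\bigoplus_{\height(\pp)=i}M(\pp)$ of the $\pp$-local, $\pp$-torsion modules $M(\pp)\in\Tcal_{\height}$ of \cref{Mp}. Since $\Acal_{\height}$, being the left-hand class of a cotorsion pair, is closed under coproducts and (finite) extensions, it suffices to prove $M(\pp)\in\Acal_{\height}$ for each $\pp$. Fix $\pp$, write $h=\height(\pp)$ and $S=R_\pp$. From the formula $M(\pp)\cong T(\pp)[-h]\otimes^{\mathbf L}_R M$ in the proof of \cref{Mp}, together with $T(\pp)\in\Pcal$ and $M\in\Pcal$ and the subadditivity of flat dimension along derived tensor products, $M(\pp)$ has finite flat dimension over $R$, hence over $S$; as $S$ is Cohen--Macaulay local of dimension $h$, \cref{ss:RG} gives $\fd_S M_\pp\le h$. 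Moreover $M(\pp)\in\Tcal_{\height}$ forces $\Tor^S_i(k(\pp),M(\pp))=0$ for $i<h$.

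Now choose a flat resolution $0\to F_h\to\cdots\to F_0\to M_\pp\to 0$ over $S$, apply $\RGamma_\pp\simeq\RGamma_\pp S\otimes^{\mathbf L}_S(-)\simeq T(\pp)[-h]\otimes^{\mathbf L}_S(-)$ (using that $R$ is Cohen--Macaulay, so $\RGamma_\pp S\simeq T(\pp)[-h]$), and compare with $\RGamma_\pp(M_\pp)\cong M(\pp)$ being concentrated in degree $0$. Since $F_i$ is flat the derived tensor is underived, and reading off cohomology yields an exact sequence
$$0\to M(\pp)\to T(\pp)\otimes_S F_h\to T(\pp)\otimes_S F_{h-1}\to\cdots\to T(\pp)\otimes_S F_0\to 0.$$
The decisive point is that $T(\pp)\otimes_S F\in\Add(T(\pp))$ for every flat $S$-module $F$: writing $F$ as a direct limit of finitely generated free modules (Govorov--Lazard), $T(\pp)\otimes_S F$ is a direct limit of finite coproducts of copies of $T(\pp)$, and since $T(\pp)=H^{h}_\pp(R_\pp)$ is an Artinian module over a commutative noetherian ring it is $\Sigma$-pure-injective, so $\Add(T(\pp))$ is closed under such direct limits. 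I expect this $\Sigma$-pure-injectivity — equivalently, the absorption of $T(\pp)\otimes(\text{flat})$ into $\Add(T(\pp))$ — to be the main obstacle: over a complete local ring it follows from $T(\pp)$ being the Matlis dual of the canonical module, but in the general (possibly non-complete) situation some care is required, and one may wish to invoke the structural analysis of these local cohomology modules in \cite{HNS}. Granting it, every term of the displayed sequence lies in $\Add(T(\pp))\subseteq\Add(T)\subseteq\Acal_{\height}$; since $\Acal_{\height}$ is hereditary, hence closed under kernels of epimorphisms whose cokernel also lies in $\Acal_{\height}$, a dimension shift along the sequence yields $M(\pp)\in\Acal_{\height}$. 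Assembling the pieces through the filtration gives $M\in\Acal_{\height}$, and therefore $M\in\Acal_{\height}\cap\Tcal_{\height}=\Add(T)$, which completes the argument.
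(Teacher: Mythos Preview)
Your overall architecture matches the paper's exactly: reduce via the canonical filtration of \cref{filtration} to the $\pp$-local $\pp$-torsion pieces $M(\pp)$, then exhibit a finite $\Add(T)$-coresolution of each $M(\pp)$ by tensoring a length-$h$ resolution with $T(\pp)$ and invoking \cref{unique-tor}. The difference is entirely in the choice of resolution, and this is where your self-identified obstacle becomes a genuine gap. You take a \emph{flat} $S$-resolution of $M_\pp$ and then need $T(\pp)\otimes_S F\in\Add(T(\pp))$ for arbitrary flat $F$, which you argue via $\Sigma$-pure-injectivity of the Artinian module $T(\pp)$. But $\Sigma$-pure-injectivity of $M$ gives only $\Add(M)\subseteq\Prod(M)$; it does \emph{not} imply that $\Add(M)$ is closed under direct limits. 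Closure under direct limits would follow from $\Add(T(\pp))$ being definable, equivalently from $T(\pp)$ being product-complete --- but in the paper that is \cref{T-prod-compl}, which is a \emph{consequence} of the very proposition you are proving, so invoking it here would be circular.

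The paper sidesteps the issue with a one-word change: use a \emph{projective} $R_\pp$-resolution $0\to P^{-h}\to\cdots\to P^0\to M\to 0$ (after reducing to $M=M(\pp)$, which is an $R_\pp$-module of projective dimension at most $h$). Then each $P^i\otimes_R T(\pp)$ lies in $\Add(T(\pp))$ trivially, since $P^i$ is a direct summand of a free $R_\pp$-module and $T(\pp)$ is already $\pp$-local. The rest of your argument (exactness via \cref{unique-tor}, identification of the leftmost cohomology with $M(\pp)$, and the conclusion $M(\pp)\in\Acal_{\height}$ via finite $\Add(T)$-coresolutions) then goes through unchanged. So the fix is immediate; there is no need for any structural input about $T(\pp)$ beyond its existence.
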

\begin{proof}
  Since $\Add(T_{\height}) = \Acal_{\height} \cap \Tcal_{\height}$ (see \cref{ss:tilting}) and $\Acal_{\height} \subseteq \Pcal$, clearly $\Add(T_{\height}) \subseteq \Pcal \cap \Tcal_{\height}$. For the other inclusion, let $M \in \Pcal \cap \Tcal_{\height}$, and we need to show that $M \in \Acal_{\height}$. Recall that $\Acal_{\height}$ is closed under coproducts and extensions. Since $M \in \Pcal$, also $M(\pp) \in \Pcal$ because $M(\pp) \cong M \otimes_R^\mathbf{L} \RGamma_\pp R_\pp$ and $\RGamma_\pp R_\pp$ is isomorphic in $\D(R)$ to a bounded complex of flat modules. Then $M(\pp) \in \Pcal \cap \Tcal_{\height}$ by \cref{Mp}. By the existence of the canonical filtration of \cref{filtration} and the above discussed closure properties of $\Acal_{\height}$, we can without loss of generality assume that $M = M(\pp)$ for some $\pp \in \Spec R$, or in other words, $M \cong \RGamma_\pp M_\pp$ in $\D(R)$. 
  
  Since $M \in \Pcal$, $M$ is of finite projective dimension also as an $R_\pp$-module, and there is a resolution $0 \to P^{-\height(\pp)} \to P^{-\height(\pp) + 1} \to \cdots \to P^0 \to M \to 0$ of length $\height(\pp)$ where $P^i$ is a projective $R_\pp$-module for each $i=-\height(\pp),-\height(\pp)+1,\cdots,0$. Applying $- \otimes_R T(\pp)$ to the truncated resolution, we obtain a complex $N^{-\height(\pp)} \to N^{-\height(\pp) + 1} \to \cdots \to N^{0}$ where $N^i = P^i \otimes_R T(\pp) \in \Add(T_{\height})$. By \cref{unique-tor}, this complex is exact in all degrees $i$ apart from $i = -\height(\pp)$, and the cohomology in degree $-\height(\pp)$ is isomorphic to 
  $$\Tor_{\height(\pp)}^R(T(\pp),M) = H^{-\height(\pp)}(T(\pp) \otimes_R^\mathbf{L} M) \cong H^0(\RGamma_\pp (M_\pp)) = M(\pp) =
   M.$$ We showed that $M$ has a finite coresolution by modules from $\Add(T_{\height})$, and therefore $M$ belongs to $\Acal_{\height}$ by \cite[Proposition 13.13]{GT12}.
\end{proof}
\subsection{}\label{ss:product-complete} For a reference about concepts from the theory of purity used in what follows, we refer the reader to \cite{Prest} or \cite{GT12}. A module $M$ is called \newterm{product-complete} if $\Add(M)$ is closed under products.
\begin{prop}\label{prod-compl}
  Let $M$ be a product-complete module. Then:
  \begin{enumerate}
    \item[(i)] $M$ is $\Sigma$-pure-injective, that is, any module in $\Add(M)$ is pure-injective,
    \item[(ii)] $\Add(M) = \Prod(M)$.  
  \end{enumerate}
  If $T$ is a tilting module inducing a cotorsion pair $(\Acal,\Tcal)$ then the following are equivalent:
  \begin{enumerate}
    \item[(1)] $T$ is product-complete,
    \item[(2)] $\Acal$ is definable,
    \item[(3)] $\Add(T)$ is definable.
  \end{enumerate}
\end{prop}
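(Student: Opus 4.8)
The plan is to reduce (i) to a single standard purity fact, deduce (ii) from it, and then run the three equivalences through a short lemma on $\Sigma$-pure-injective modules. For (i): since $\Add(N)=\Add(M)$ for every $N\in\Add(M)$, each such $N$ is itself product-complete, and $M$ is $\Sigma$-pure-injective exactly when every direct sum of copies of $M$ (equivalently, every module in $\Add(M)$) is pure-injective; thus (i) reduces to the statement that a product-complete module is pure-injective. This is the classical fact that the inclusion $\Prod(M)\subseteq\Add(M)$ forces the descending chain condition on pp-definable subgroups of $M$, whence $\Sigma$-pure-injectivity; I would invoke it from \cite{GT12} or \cite{Prest} rather than reprove it. Given (i), part (ii) is immediate: each $M^{(X)}$ is pure-injective, it is a pure submodule of $M^{X}\in\Prod(M)$, so it splits off and lies in $\Prod(M)$; hence $\Add(M)\subseteq\Prod(M)$, while $\Prod(M)\subseteq\Add(M)$ is the hypothesis.

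The bridge to the equivalences is the following lemma, which I would state and prove separately: \emph{if $M$ is $\Sigma$-pure-injective then $\Add(M)$ is closed under pure submodules, pure epimorphic images and direct limits, so that $\Add(M)$ is definable precisely when it is closed under direct products.} A pure submodule $N'$ of a $\Sigma$-pure-injective module $N$ is again $\Sigma$-pure-injective: writing a descending chain of pp-subgroups of $N'$ as $\phi_i(N)\cap N'$ (using purity) and replacing $\phi_i$ by $\phi_1\wedge\cdots\wedge\phi_i$ we may assume $\phi_1(N)\supseteq\phi_2(N)\supseteq\cdots$, which stabilizes, hence so does the original chain. So for $N\in\Add(M)$ such an $N'$ is pure-injective and, being pure in $N$, is a direct summand of $N$, hence lies in $\Add(M)$; its complement realizes the corresponding pure quotient, and a direct limit of a system in $\Add(M)$ is a pure epimorphic image of the coproduct of the system. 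The ``definable $\Leftrightarrow$ product-closed'' dichotomy then follows from the standard description of definable subcategories.

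Now the equivalences. The implication $(3)\Rightarrow(1)$ is trivial, and $(1)\Rightarrow(3)$ is exactly the bridge lemma applied to $T$, which is $\Sigma$-pure-injective by (i). For $(1)\Leftrightarrow(2)$ I bring in the tilting structure: by \cref{ss:tilting}, $\Add(T)=\Acal\cap\Tcal$ and $\Acal$ consists of the modules admitting an $\Add(T)$-coresolution of length at most $n:=\pd_R T$, while $\Tcal=T\Perp{}$ is a tilting class, hence of finite type by \cite{BH08,BS07} and therefore definable. If $\Acal$ is definable, then $\Acal\cap\Tcal=\Add(T)$ is an intersection of definable classes, hence definable, so $T$ is product-complete by $(3)\Rightarrow(1)$; conversely, if $T$ is product-complete then $\Add(T)$ is definable by $(1)\Rightarrow(3)$, and $\Acal$ is closed under products and direct limits (apply $\prod$ resp.\ $\varinjlim$ termwise to $\Add(T)$-coresolutions of bounded length, using closure of $\Add(T)$) and under pure submodules (dualize an $\Add(T)$-coresolution of $A$ to a length-$n$ resolution of $A^{+}$ by modules dual to $\Add(T)$; a pure inclusion $A'\subseteq A$ makes $A^{+}\twoheadrightarrow(A')^{+}$ split, so $(A')^{+}$ inherits such a resolution, and dualizing back gives $A'\in\Acal$). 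Hence $\Acal$ is definable.

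The only genuinely non-formal ingredient is part (i) — the passage ``product-complete $\Rightarrow$ $\Sigma$-pure-injective'' — which I would take from the literature; once the bridge lemma is available the rest is bookkeeping. The subtlest of the routine points is the closure of $\Acal$ under pure submodules in $(1)\Rightarrow(2)$, for which I would lean on definability of the tilting class $\Tcal$ together with the dual (flat) picture developed in \cref{s:cotilting}, or alternatively on the standard closure properties of the left-hand class of a finite-type cotorsion pair under purity.
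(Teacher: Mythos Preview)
Your treatment of (i) and (ii) matches the paper's exactly: cite the standard $\Sigma$-pure-injectivity fact (the paper points to \cite[Lemma~2.32(c)]{GT12}) and then split the pure monomorphism $M^{(X)}\hookrightarrow M^X$. Your bridge lemma giving $(1)\Rightarrow(3)$ directly is correct and is a pleasant addition; the paper instead routes this through $(1)\Rightarrow(2)\Rightarrow(3)$, citing \cite[Proposition~13.56]{GT12} for $(1)\Leftrightarrow(2)$ and using $\Add(T)=\Acal\cap\Tcal$ together with definability of $\Tcal$ for $(2)\Rightarrow(3)$. Your $(2)\Rightarrow(3)$ and $(3)\Rightarrow(1)$ are the same as the paper's.

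The gap is in your direct argument for $(1)\Rightarrow(2)$. Closure of $\Acal$ under products and direct limits via termwise operations on bounded $\Add(T)$-coresolutions is fine, but your pure-submodule argument does not go through. ``Dualizing back'' from the fact that $(A')^{+}$ inherits a $\Prod(T^{+})$-resolution does not recover $A'\in\Acal$: there is no general implication from $(A')^{+}$ lying in the right-hand class of the cotilting cotorsion pair to $A'$ lying in $\Acal$, absent precisely the definability you are trying to establish. Your fallback --- ``standard closure properties of the left-hand class of a finite-type cotorsion pair under purity'' --- is false as stated: finite type does not force the left class to be closed under pure submodules (take $(\Pcal_0,\Mod R)$ over a ring where projectives are not pure-submodule closed). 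The paper sidesteps all of this by invoking \cite[Proposition~13.56]{GT12} for $(1)\Leftrightarrow(2)$; if you want a self-contained route, the cleanest fix is to keep your $(1)\Leftrightarrow(3)$ and $(2)\Rightarrow(3)$, and then prove $(3)\Rightarrow(2)$ by quoting that same reference, rather than attempting the pure-submodule closure of $\Acal$ by hand.
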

\begin{proof}
  $(i)$: Follows directly from \cite[Lemma 2.32(c)]{GT12}.

  $(ii):$ By the definition, $\Prod(M) \subseteq \Add(M)$. For any set $X$ consider the natural map $M^{(X)} \to M^X$. This is a pure monomorphism, and so this map splits by $(i)$. This shows $\Add(M) \subseteq \Prod(M)$.

  The equivalence of $(1)$ and $(2)$ for a tilting module $T$ is proved in \cite[Proposition 13.56]{GT12}. Since $\Add(T) = \Acal \cap \Tcal$ and $\Tcal$ is definable \cite[Corollary 13.42]{GT12}, $(2) \implies (3)$. On the other hand, $(3)$ implies that $\Add(T)$ is closed under products, which amounts to $(1)$.
\end{proof}

\begin{cor}\label{T-prod-compl}
  The module $T_{\height}$ is product-complete, and thus both $\Add(T_{\height})$ and $\Acal_{\height}$ are definable subcategories of $\Mod R$. As a consequence, $\Tcal_{\height}$ is an enveloping class (see \cite[\S 5]{GT12}).
\end{cor}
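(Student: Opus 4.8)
The plan is to reduce the whole statement to \cref{AddT} together with the definability facts already recorded in the preliminaries, and then to feed the outcome into \cref{prod-compl}. First I would note that, since $\dim(R) < \infty$, \cref{ss:RG} gives $\Pcal = \Fcal_{\dim(R)}$, and that this class is definable because $R$ is noetherian. Second, $\Tcal_{\height}$ is a tilting class and hence definable by \cite[Corollary 13.42]{GT12}; alternatively one sees this directly, since $\Tcal_{\height} = \{M \in \Mod R \mid \width_R(\pp,M) \geq \height(\pp)~\forall \pp \in \Spec R\}$ is cut out by the vanishing of certain $\Tor$-groups, a condition preserved under direct limits, products, and pure submodules.

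The key step is then to invoke \cref{AddT}, which identifies $\Add(T_{\height}) = \Acal_{\height} \cap \Tcal_{\height}$ with $\Pcal \cap \Tcal_{\height}$. An intersection of definable subcategories is again definable — each of the closure properties (under direct limits, products, pure submodules) passes to intersections — so $\Add(T_{\height})$ is definable; in particular it is closed under products, which is exactly the assertion that $T_{\height}$ is product-complete, via the implication $(3) \Rightarrow (1)$ of \cref{prod-compl}. Plugging this back into \cref{prod-compl}, the equivalence $(1) \Leftrightarrow (2)$ then yields that $\Acal_{\height}$ is definable as well. This settles the first two assertions of the corollary.

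For the final clause I would use the standard fact (see \cite[\S 5]{GT12}) that if $(\Acal,\Bcal)$ is a complete cotorsion pair whose left-hand class $\Acal$ is closed under direct limits, then it is perfect, so in particular $\Bcal$ is an enveloping class. Since $(\Acal_{\height},\Tcal_{\height})$ is a tilting cotorsion pair it is complete (see \cref{ss:tilting}), and we have just shown $\Acal_{\height}$ is definable, hence closed under direct limits; therefore $\Tcal_{\height}$ is enveloping.

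I do not expect a genuine obstacle: the argument is bookkeeping built on \cref{AddT}, \cref{prod-compl}, and \cref{ss:RG}. The only points that deserve a moment's care are the (elementary) stability of definability under finite intersections and locating the precise reference for the passage ``$\Acal$ closed under direct limits $\Rightarrow$ $\Bcal$ enveloping'' — the latter being exactly why the corollary statement already points to \cite[\S 5]{GT12}.
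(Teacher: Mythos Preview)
Your argument is correct and follows essentially the same route as the paper: invoke \cref{AddT} to write $\Add(T_{\height}) = \Pcal \cap \Tcal_{\height}$ as an intersection of definable classes, deduce product-completeness and definability of $\Acal_{\height}$ via \cref{prod-compl}, and conclude envelopingness from the closure of $\Acal_{\height}$ under direct limits. The paper's proof is terser (it cites \cite[Theorem 7.2.6]{GT12} for the last step rather than \S5), but the content is the same.
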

\begin{proof}
  By \cref{AddT}, $\Add(T_{\height})$ is an intersection of two definable subcategories $\Pcal$ and $\Tcal_{\height}$, and thus it is itself definable. The rest follows from \cref{prod-compl} and \cite[Theorem 7.2.6]{GT12}.
\end{proof}
\subsection{Finite type of $\Pcal$} 
\begin{lemma}\label{lemma-cp}
  Let $(\Xcal,\Ycal)$ be a complete hereditary cotorsion pair and $\Zcal$ a class of modules closed under extensions such that $\Xcal \subseteq \Zcal$ and $\Xcal \cap \Ycal = \Zcal \cap \Ycal$. Then $\Xcal = \Zcal$.
\end{lemma}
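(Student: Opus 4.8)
The plan is to prove the only nontrivial inclusion, namely $\Zcal \subseteq \Xcal$; the reverse inclusion is a hypothesis. The crucial choice is to feed an element of $\Zcal$ into the \emph{special preenvelope} supplied by completeness rather than the special precover: the point is that the cokernel of such a preenvelope already lies in $\Xcal \subseteq \Zcal$, which is exactly what is needed to invoke extension-closure of $\Zcal$.

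Concretely, I would fix $Z \in \Zcal$ and use completeness of $(\Xcal,\Ycal)$ to get a short exact sequence $0 \to Z \to Y \to X \to 0$ with $Y \in \Ycal$ and $X \in \Xcal$. Since $\Xcal \subseteq \Zcal$ we have $X \in \Zcal$, and as $\Zcal$ is closed under extensions with $Z, X \in \Zcal$, we conclude $Y \in \Zcal$. Hence $Y \in \Zcal \cap \Ycal = \Xcal \cap \Ycal$, and in particular $Y \in \Xcal$.

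It then remains to descend from $Y, X \in \Xcal$ to the kernel $Z$. Here I would use the hereditary hypothesis: for any $W \in \Ycal$, applying $\Hom_R(-,W)$ to $0 \to Z \to Y \to X \to 0$ yields the exact piece $\Ext_R^1(Y,W) \to \Ext_R^1(Z,W) \to \Ext_R^2(X,W)$, in which the left term vanishes because $Y \in \Xcal = \Perp{1}\Ycal$ and the right term vanishes because the cotorsion pair is hereditary. Thus $\Ext_R^1(Z,W) = 0$, and since $W \in \Ycal$ was arbitrary, $Z \in \Perp{1}\Ycal = \Xcal$. This gives $\Zcal \subseteq \Xcal$, hence $\Xcal = \Zcal$.

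There is no genuine obstacle; the argument is short. The only subtlety worth flagging is that both halves of the hypothesis on $(\Xcal,\Ycal)$ are genuinely used — completeness to produce the sequence, and \emph{hereditariness} (not merely extension-closure of $\Xcal$) to run the $\Ext_R^2$-vanishing step that makes $\Xcal$ closed under kernels of epimorphisms between its objects — so one should resist the temptation to use only the more obvious special precover $0 \to \Ycal \ni Y' \to X' \to Z \to 0$, where the outer term $Y'$ need not lie in $\Zcal$.
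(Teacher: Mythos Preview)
Your proof is correct and follows essentially the same route as the paper: take a special $\Ycal$-preenvelope of $Z \in \Zcal$, use extension-closure of $\Zcal$ together with $\Xcal \cap \Ycal = \Zcal \cap \Ycal$ to place the middle term in $\Xcal$, then invoke hereditariness to conclude $Z \in \Xcal$. The only difference is that you spell out the $\Ext$-sequence explicitly, whereas the paper simply cites that $\Xcal$ is closed under kernels of epimorphisms in a hereditary cotorsion pair.
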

\begin{proof}
  Let $Z \in \Zcal$ and consider the exact sequence $0 \to Z \to Y^Z \to X^Z \to 0$ which exists by completeness of the cotorsion pair. Then $Y^Z \in \Ycal$, and by the assumptions, also $Y^Z \in \Zcal$. It follows that $Y^Z \in \Xcal$. Since the cotorsion pair is hereditary, $\Xcal$ is closed under kernels of epimorphisms and so $Z \in \Xcal$.
\end{proof}

\begin{theorem}\label{T:finite-type}
  Let $R$ be a commutative noetherian ring of finite Krull dimension. The following are equivalent:
  \begin{enumerate}
    \item[(i)] $R$ is Cohen-Macaulay,
    \item[(ii)] the cotorsion pair $(\Pcal,\Pcal\Perp{1})$ is of finite type. 
  \end{enumerate}
\end{theorem}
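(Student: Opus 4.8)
The plan is to prove the two implications separately.

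\emph{$(i)\Rightarrow(ii)$.} Assume $R$ is Cohen-Macaulay. I would apply \cref{lemma-cp} to the complete hereditary (indeed tilting) cotorsion pair $(\Xcal,\Ycal)=(\Acal_{\height},\Tcal_{\height})$ and to $\Zcal=\Pcal$. The class $\Pcal$ is closed under extensions; the inclusion $\Acal_{\height}\subseteq\Pcal$ holds because by \cref{ss:tilting} every module in $\Acal_{\height}$ admits a finite $\Add(T_{\height})$-coresolution while $T_{\height}\in\Pcal$ by (T1); and the equality $\Acal_{\height}\cap\Tcal_{\height}=\Pcal\cap\Tcal_{\height}$ is precisely \cref{AddT}. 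Thus \cref{lemma-cp} gives $\Acal_{\height}=\Pcal$, which forces the cotorsion pairs $(\Acal_{\height},\Tcal_{\height})$ and $(\Pcal,\Pcal\Perp{1})$ to coincide (both are cotorsion pairs with left-hand class $\Pcal$, using $\Findim(R)=\dim R<\infty$ for the second). Hence $(\Pcal,\Pcal\Perp{1})$ is a tilting cotorsion pair, so of finite type by \cref{ss:mintilting}.

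\emph{$(ii)\Rightarrow(i)$.} I would argue by contraposition and localize. By \cref{l:fintype} and \cref{ss:rid}, condition $(ii)$ is equivalent to $\Pcal=\Acal_{\grade}$. Suppose $R$ is not Cohen-Macaulay, so some localization $R_{\mm}$ at a maximal ideal satisfies $g:=\depth R_{\mm}<\dim R_{\mm}=:e$. Since the grade function of $R_{\mm}$ is order-preserving with top value $\grade_{R_{\mm}}(\mm R_{\mm})=g$, it is bounded above by $g$, so by the classification \cref{ss:classification} the minimal tilting cotorsion pair of $R_{\mm}$ is induced by a $g$-tilting module; a dévissage along the finite $\Add$-coresolutions of \cref{ss:tilting} then shows $\Acal_{\grade}(R_{\mm})\subseteq\Pcal_{g}(R_{\mm})$. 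But $\Findim(R_{\mm})=e>g$ by \cref{ss:RG}, so there is an $R_{\mm}$-module $M$ with $\pd_{R_{\mm}}M=e$, hence $M\in\Pcal(R_{\mm})\setminus\Acal_{\grade}(R_{\mm})$.

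It then remains to transport $M$ to $R$ so as to contradict $\Pcal(R)=\Acal_{\grade}(R)$. Viewed over $R$, the module $M$ still has finite projective dimension, since a projective $R_{\mm}$-resolution of $M$ is a resolution by flat $R$-modules, so $\fd_R M<\infty$ and hence $M\in\Pcal(R)$ as $\dim R<\infty$ (\cref{ss:RG}). On the other hand $M\notin\Acal_{\grade}(R)$: choose a non-split extension $0\to N\to E\to M\to 0$ of $R_{\mm}$-modules with $N\in\Tcal_{\grade}(R_{\mm})$ (which exists because $M\notin\Perp{1}\Tcal_{\grade}(R_{\mm})$); it stays non-split over $R$, since an $R$-linear splitting of a sequence of $R_{\mm}$-modules is automatically $R_{\mm}$-linear, and $N\in\Tcal_{\grade}(R)$ because $\supp_R(N)$ consists of primes contained in $\mm$, so $\width_R(\pp,N)=\infty$ for $\pp\not\subseteq\mm$ while $\width_R(\pp,N)=\width_{R_{\mm}}(\pp R_{\mm},N)\ge\grade_{R_{\mm}}(\pp R_{\mm})\ge\grade_R(\pp)$ for $\pp\subseteq\mm$. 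Hence $\Ext_R^1(M,N)\ne 0$, so $M\in\Pcal(R)\setminus\Acal_{\grade}(R)$, contradicting $(ii)$. The main obstacle is this direction: read globally, $(ii)$ only forces $\sup_{\pp}\grade(\pp)=\dim R$, which is too weak to yield the Cohen-Macaulay property, so one genuinely has to pass to the local situation and exploit that there the grade function is bounded by the depth while the finitistic dimension equals the (in general strictly larger) Krull dimension.
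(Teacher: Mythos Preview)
Your argument for $(i)\Rightarrow(ii)$ is exactly the paper's proof: apply \cref{lemma-cp} to $(\Acal_{\height},\Tcal_{\height})$ and $\Zcal=\Pcal$, using \cref{AddT}.

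For $(ii)\Rightarrow(i)$ your proof is correct but follows a different route from the paper. The paper shows directly that condition $(ii)$ localizes: using the Eklof--Trlifaj characterization (every module in $\Pcal$ is a summand of a $\Pcal^f$-filtered module), one tensors a given filtration with $R_\mm$ to obtain $(ii)$ for $R_\mm$. Then, over a local non-Cohen-Macaulay ring, the Auslander--Buchsbaum formula gives $\findim(R_\mm)=\depth(R_\mm)<\dim(R_\mm)$, so any summand of a $\Pcal^f$-filtered module has projective dimension $<\dim(R_\mm)$, contradicting Bass's result that $\Findim(R_\mm)=\dim(R_\mm)$. Your approach instead translates $(ii)$ into $\Pcal=\Acal_{\grade}$ via \cref{l:fintype}, produces a local witness $M\in\Pcal(R_\mm)\setminus\Acal_{\grade}(R_\mm)$ using the bound $\grade\leq\depth(R_\mm)$ on a local ring, and then manually transports the Ext-obstruction back to $R$ by checking that $N\in\Tcal_{\grade}(R_\mm)$ lies in $\Tcal_{\grade}(R)$ and that the extension remains non-split. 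The paper's localization of the finite-type condition is slicker and avoids the bookkeeping of transporting a specific extension; on the other hand, your local step (bounding $\Acal_{\grade}(R_\mm)\subseteq\Pcal_g(R_\mm)$ via the projective dimension of the tilting module) bypasses the explicit filtration description and the Auslander--Buchsbaum formula, relying only on the elementary fact that $\grade$ is order-preserving with maximum $\depth(R_\mm)$.
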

\begin{proof}
  $(i) \implies (ii):$ This follows directly from \cref{AddT} and \cref{lemma-cp} applied to the cotorsion pair $(\Acal_{\height},\Tcal_{\height})$ and $\Pcal$.

  $(ii) \implies (i):$ Notice first that if $(ii)$ is true for $R$, then the same also applies to each local ring $R_\pp$. Indeed, any $R_\pp$-module of finite projective dimension is of the form $M \otimes_R R_\pp$ for some $M \in \Pcal$, this is because any $R_\pp$-module of finite projective dimension is necessarily of finite projective dimension also as an $R$-module, see \cref{ss:RG}. Recall from \cref{ss:findim} that $(ii)$ is equivalent to any $M \in \Pcal$ being a direct summand in a $\Pcal^f$-filtered module. Assume first that $M= \bigcup_{\alpha \leq \lambda}M_\alpha$ is an expression of $M$ as a filtration of modules from $\Pcal^f$, that is: $(M_\alpha \mid \alpha \leq \lambda)$ is a continuous chain of submodules of $M$ such that $M_{\alpha + 1}/M_\alpha \in \Pcal^f$ for each ordinal $\alpha < \lambda$, $M_0 = 0$, and $M_\lambda = M$. Then tensoring this chain with $R_\pp$ yields the desired filtration of $M \otimes_R R_\pp$ by modules from $\mod{R_\pp}$ of finite projective dimension. It follows that any $R_\pp$-module of finite projective dimension is a direct summand in a module admitting such a filtration.

  By the previous paragraph, we may assume that $R$ is a local ring. Then we have $\findim(R) = \depth(R)$ by the Auslander-Buchsbaum formula \cite[Theorem 8.13]{24h}. By definition, $R$ not being Cohen-Macaulay amounts to $\depth(R) < \dim(R)$, and so any module which is a direct summand of a $\Pcal^f$-filtered module is of projective dimension strictly smaller then $\dim(R)$. On the other hand, by \cite[Proposition 5.4]{B62} there is $M \in \Pcal$ with $\pd_R(M) = \dim(R)$, which yields a contradiction with $(ii)$.
\end{proof}
\begin{rmk}
  Apart from the Gorenstein case, the implication $(i) \implies (ii)$ of \cref{T:finite-type} was also known to hold for Cohen-Macaulay rings of Krull dimension one. This is a particular case of results about the finite type of modules of projective dimension at most one studied in \cite{BH09} by Bazzoni and Herbera; see \cite[Theorem 8.4]{BH09} in particular.
\end{rmk}
The following is the injective counterpart of an analogous statement proved in \cite[Theorem 5.22]{CFF02} for the notions of restricted projective dimensions.
\begin{cor}\label{Rid}
  Let $R$ be a commutative noetherian ring of finite Krull dimension. The following are equivalent:
  \begin{enumerate}
    \item[(i)] $\Rid_R(M) = \rid_R(M)$ for any cohomologically bounded $R$-complex $M$,
    \item[(ii)] $\Rid_R(M) = \rid_R(M)$ for any $R$-module $M$,
    \item[(iii)] $R$ is Cohen-Macaulay. 
  \end{enumerate}
  
\end{cor}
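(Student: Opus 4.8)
The plan is to obtain this as a formal consequence of the two structural results already in hand, \cref{l:fintype} and \cref{T:finite-type}. First I would record that the standing hypotheses make \cref{l:fintype} applicable: since $R$ is commutative noetherian of finite Krull dimension, the Bass--Raynaud--Gruson theorem recalled in \cref{ss:RG} gives $\Findim(R) = \dim(R) < \infty$. With that in place, \cref{l:fintype} says that each of conditions (i) and (ii) of the corollary is equivalent to the cotorsion pair $(\Pcal,\Pcal\Perp{1})$ being of finite type. By \cref{T:finite-type}, finite type of $(\Pcal,\Pcal\Perp{1})$ holds precisely when $R$ is Cohen-Macaulay, i.e.\ condition (iii). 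Chaining the equivalences
$$\text{(i)} \iff (\Pcal,\Pcal\Perp{1}) \text{ of finite type} \iff \text{(iii)} \iff (\Pcal,\Pcal\Perp{1}) \text{ of finite type} \iff \text{(ii)}$$
finishes the proof.

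The only step that requires anything beyond citing earlier results is the reduction to $\Findim(R) < \infty$, and that is classical. Consequently there is no substantive obstacle here: all the real work has been carried out in establishing \cref{T:finite-type} (the finite type of $\Pcal$ over a Cohen-Macaulay ring, via \cref{AddT} and \cref{lemma-cp}, and the converse via the Auslander--Buchsbaum formula and Bass's construction of a module of maximal projective dimension) and in the elementary homological-algebra argument behind \cref{l:fintype}.

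For the reader's orientation I would also point out what the corollary is saying concretely: the implication (iii)$\Rightarrow$(i) asserts that over a Cohen-Macaulay ring of finite Krull dimension the small and large restricted injective dimensions agree on all cohomologically bounded complexes, and combined with \cref{CM-rid} this common value is exactly the Chouinard invariant $\CH_R(M)$. This is the injective analogue of \cite[Theorem 5.22]{CFF02} for restricted projective dimensions, as noted just before the statement.
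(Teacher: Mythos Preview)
Your proof is correct and matches the paper's own argument, which simply cites \cref{T:finite-type} and \cref{l:fintype}. Your explicit check that $\Findim(R)<\infty$ via \cref{ss:RG} is a reasonable clarification of why \cref{l:fintype} applies.
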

\begin{proof}
  Follows directly from \cref{T:finite-type} and \cref{l:fintype}.
\end{proof}
\subsection{Existence and uniqueness of product-complete tilting modules}
We will show that the tilting module $T_{\height}$ is, up to equivalence, the unique product-complete tilting module.
\begin{lemma}\label{prodcomp-localize}
  Let $T$ be a product-complete tilting module and $\qq \in \Spec R$ a prime ideal. Then $T_\qq$ is a product-complete tilting module in $\Mod R_\qq$. 
\end{lemma}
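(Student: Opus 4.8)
The plan is to check the three tilting axioms for $T_\qq$ over $R_\qq$, and then separately verify product-completeness. First I would recall from \cref{ss:RG} that, since $R$ has finite Krull dimension, an $R_\qq$-module has finite projective dimension over $R_\qq$ precisely when it has finite projective dimension (equivalently, finite flat dimension) over $R$; in particular localization is exact and preserves finite projective resolutions, so $T \in \Pcal$ over $R$ gives $T_\qq \in \Pcal$ over $R_\qq$, establishing (T1). For (T3), apply the exact functor $-\otimes_R R_\qq$ to the sequence $0 \to R \to T_0 \to \cdots \to T_n \to 0$ from (T3) for $T$; since each $T_i \in \Add(T)$, we get $(T_i)_\qq \in \Add(T_\qq)$ (localization commutes with coproducts and summands), yielding the required coresolution of $R_\qq$.

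The axiom (T2) requires $\Ext^i_{R_\qq}(T_\qq, M) = 0$ for all $i > 0$ and all $M \in \Add(T_\qq)$. The key point is that for $R_\qq$-modules $A, B$, one has $\Ext^i_{R_\qq}(A_\qq, B) \cong \Ext^i_R(A, B)_\qq \cong \Ext^i_R(A,B)$ when $A$ admits a degreewise-finitely-generated projective resolution over $R$ — but $T$ need not be finitely generated. Instead I would argue as follows: $T_\qq \in \Add(T_\qq)$, and more generally every module in $\Add(T_\qq)$ is a summand of $(T_\qq)^{(X)} = (T^{(X)})_\qq$ for some set $X$; writing $N = T^{(X)}$, it suffices to show $\Ext^i_{R_\qq}((T_\qq), (N)_\qq) = 0$. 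Since $T$ has a bounded resolution $P_\bullet$ by projective $R$-modules, $T_\qq$ is resolved by $(P_\bullet)_\qq$ over $R_\qq$, so $\Ext^i_{R_\qq}(T_\qq, N_\qq) = H^i\Hom_{R_\qq}((P_\bullet)_\qq, N_\qq) = H^i(\Hom_R(P_\bullet, N)_\qq) = (\Ext^i_R(T, N))_\qq$, where the middle equality uses that each $P_j$ is a \emph{finitely generated} projective $R$-module (which we may assume since $T \in \mod R$? — no, $T$ need only be in $\Pcal$, not $\Pcal^f$). The main obstacle is precisely this: without finite generation, $\Hom_R(P_\bullet, N)$ need not commute with localization. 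I would circumvent it by instead using that $\Ext^i_R(T, N) = 0$ for $i>0$ already (this is (T2) for $T$, as $N = T^{(X)} \in \Add T \subseteq T\Perp{}$), hence its localization vanishes; and the inequality $\Ext^i_{R_\qq}(T_\qq, N_\qq) \le (\Ext^i_R(T,N))_\qq$ can be obtained from a bounded projective resolution $P_\bullet \to T$ over $R$ via the spectral sequence / direct computation $\Ext^i_{R_\qq}(T_\qq, N_\qq) = H^i(\Hom_{R_\qq}(P_\bullet \otimes R_\qq, N_\qq))$ together with the natural map $\Hom_R(P_j, N)_\qq \to \Hom_{R_\qq}((P_j)_\qq, N_\qq)$ — for $P_j$ projective but possibly infinitely generated this map exists and a diagram chase on the bounded complex shows the cohomology of the target is a subquotient of the localized cohomology of the source; since the latter is $0$, so is the former. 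This handles (T2).

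It remains to check that $T_\qq$ is product-complete, i.e.\ $\Add(T_\qq)$ is closed under products in $\Mod R_\qq$. Let $\{N_x\}_{x \in X}$ be a family in $\Add(T_\qq)$; each $N_x$ is a summand of a localization of a module in $\Add(T)$, but more simply, by \cref{prod-compl}(i) $T$ is $\Sigma$-pure-injective, and it is a standard fact that localization at a prime of a $\Sigma$-pure-injective module over a commutative noetherian ring is again $\Sigma$-pure-injective (indeed $M_\qq$ is a direct summand of $M$ as an $R$-module when $M$ is pure-injective and $\qq$-local considerations apply — more robustly, pure-injectivity is detected by definable subcategories and localization preserves the relevant structure). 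Then $T_\qq$ being $\Sigma$-pure-injective means, by \cite[Lemma 2.32]{GT12} type reasoning, that $\Add(T_\qq)$ is closed under products. Alternatively and most cleanly: the product $\prod_{x} N_x$ taken in $\Mod R_\qq$ agrees with the product taken in $\Mod R$ (since $\Mod R_\qq$ is closed under products inside $\Mod R$), each $N_x \in \Add(T_\qq) \subseteq \Add(T)$, so $\prod_x N_x \in \Add(T)$ by product-completeness of $T$; and $\prod_x N_x$ is an $R_\qq$-module, being a product of $R_\qq$-modules. To conclude $\prod_x N_x \in \Add(T_\qq)$ I would use \cref{prod-compl}(ii), $\Add(T) = \Prod(T)$, to write $\prod_x N_x$ as a summand of $T^Y$ for some set $Y$; localizing, and using that $\prod_x N_x$ is already local so equals its own localization, exhibits it as a summand of $(T^Y)_\qq$, which maps to $(T_\qq)^Y$; a short argument (again via $\Sigma$-pure-injectivity of $T_\qq$, so that $(T^Y)_\qq \to (T_\qq)^Y$ is a split mono or the two agree up to summands) places $\prod_x N_x$ in $\Add(T_\qq) = \Prod(T_\qq)$. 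I expect the clean path is: $T_\qq$ is $\Sigma$-pure-injective (localization preserves this), hence by \cref{prod-compl}(i)$\Rightarrow$product-complete via the splitting argument of \cref{prod-compl}(ii) run in reverse, $\Add(T_\qq)$ is closed under products. The hardest point overall is reconciling localization with the infinitary $\Hom$ and $\prod$ in the noncommutative-looking Ext computation and the product-completeness step — everything reduces to the boundedness of the projective resolution of $T$ plus $\Sigma$-pure-injectivity, both already available.
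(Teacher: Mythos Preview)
Your argument has two genuine gaps.

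\textbf{Axiom (T2).} The claim that ``the cohomology of the target is a subquotient of the localized cohomology of the source'' is not true in general: a morphism of bounded complexes gives no such relation between cohomologies without injectivity or surjectivity hypotheses on the components, and the maps $\Hom_R(P_j,N)_\qq \to \Hom_{R_\qq}((P_j)_\qq,N_\qq)$ have neither for infinitely generated $P_j$. The clean fix is adjunction: for any $R_\qq$-module $M$ one has $\Ext^i_{R_\qq}(T_\qq,M) \cong \Ext^i_R(T,M)$, since $(-)\otimes_R R_\qq$ is exact and left adjoint to restriction. Now $M \in \Add(T_\qq)$ is a summand of $(T^{(X)})_\qq$, and since the tilting class $\Tcal = T\Perp{}$ is definable (hence closed under direct limits), $(T^{(X)})_\qq \in \Tcal$; thus $\Ext^i_R(T,M)=0$ for $i>0$.

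\textbf{Product-completeness.} Your ``clean path'' reverses an implication: $\Sigma$-pure-injectivity of $T_\qq$ does \emph{not} imply that $\Add(T_\qq)$ is closed under products --- \cref{prod-compl}(i) goes the other way. Your second approach is on the right track (each $N_x \in \Add(T)$, hence $\prod N_x \in \Add(T) \cap \Mod R_\qq$), but you never justify $\Add(T_\qq) \subseteq \Add(T)$, and the final step via $(T^Y)_\qq \to (T_\qq)^Y$ is inconclusive.

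The paper's proof avoids all of this with one observation you are missing: by \cref{prod-compl}, product-completeness of $T$ makes $\Add(T)$ itself a \emph{definable} subcategory, hence closed under direct limits. This immediately gives $\Add(T_\qq) \subseteq \Add(T)$, and conversely any $R_\qq$-module $M \in \Add(T)$ satisfies $M \cong M_\qq \in \Add(T_\qq)$ (localize the splitting $M \mid T^{(X)}$). Thus $\Add(T_\qq) = \Add(T) \cap \Mod R_\qq$, an intersection of two classes closed under products in $\Mod R$, and product-completeness of $T_\qq$ follows at once. The tilting axioms are then immediate (your (T1), (T3) arguments are fine; (T2) follows as above, or is implicit in the classification of \cref{ss:classification}).
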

\begin{proof}
  By \cref{prod-compl}, $\Add(T)$ is a definable subcategory of $\Mod R$, and therefore it is closed under direct limits. It follows that $\Add(T_\qq)$ is a full subcategory of $\Add(T)$. In fact, $\Add(T_\qq) = \Add(T) \cap \Mod R_\qq$. Indeed, if $M$ is an $R_\qq$-module in $\Add(T)$, then $M \cong M \otimes_R R_\qq$ belongs to $\Add(T_\qq)$. Since both $\Add(T)$ and $\Mod R_\qq$ are closed under products in $\Mod R$, it follows that $\Add(T_\qq)$ is closed under products in $\Mod R$, and thus in $\Mod R_\qq$ as well.
\end{proof}
\begin{theorem}\label{T:unique-prodcomp}
  Let $R$ be a commutative noetherian ring. The following are equivalent:
  \begin{enumerate}
    \item[(i)] $R$ is Cohen-Macaulay of finite Krull dimension,
    \item[(ii)] there is a product-complete tilting module $T$ in $\Mod R$.
  \end{enumerate}
  Furthermore, if these conditions are satisfied, then $T$ is equivalent to the height tilting module $T_{\height}$ as tilting modules.
\end{theorem}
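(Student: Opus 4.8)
For the implication $(i)\Rightarrow(ii)$ there is nothing to add: \cref{T-prod-compl} already states that the height tilting module $T_{\height}$ is product-complete. The content is therefore the converse together with the uniqueness clause, and the plan is to prove both at once by computing the characteristic function of an arbitrary product-complete tilting module and showing that it must equal the height function $\height\colon\pp\mapsto\height(\pp)$.

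So let $R$ be commutative noetherian and let $T$ be a product-complete tilting module; by \cref{ss:classification} it corresponds to a characteristic function $\ef\colon\Spec R\to\Zbb$, and since $T$ is an $n$-tilting module with $n=\pd_R T$ we have $\ef\le n$. Fix a prime $\qq\in\Spec R$. By \cref{prodcomp-localize}, $T_\qq$ is a product-complete tilting module over $R_\qq$, and a short verification (using the adjunction isomorphism $\Ext_{R_\qq}^i(T_\qq,M)\cong\Ext_R^i(T,M)$ for $M\in\Mod R_\qq$, together with the behaviour of $\width$ under localization) shows that the characteristic function of $T_\qq$ over $R_\qq$ is the restriction $\ef|_{\Spec{R_\qq}}$; this compatibility can alternatively be extracted from \cite{AHPS14,HNS}. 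As $\ef$ is order-preserving, $\max(\ef|_{\Spec{R_\qq}})=\ef(\qq)$, so $T_\qq$ is an $\ef(\qq)$-tilting module. Two standard inputs now combine. First, the left-hand class $\Acal(T_\qq)$ of an $\ef(\qq)$-tilting module is contained in $\Pcal_{\ef(\qq)}$: this is a short induction on the length of an $\Add(T_\qq)$-coresolution, using the usual short exact sequence estimate for projective dimension (see \cref{ss:tilting} and \cite[\S 13]{GT12}). Second, $\Acal(T_\qq)$ is definable by \cref{prod-compl}, hence closed under direct limits, and it contains $R_\qq$; therefore it contains every flat $R_\qq$-module. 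Combining the two, every flat $R_\qq$-module has projective dimension at most $\ef(\qq)$, and by the theorem of Raynaud and Gruson \cite{RG71} this forces $\dim R_\qq\le\ef(\qq)$.

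The conclusion is then a numerical squeeze. Every characteristic function satisfies $\ef\le\grade$, and one always has $\grade(\qq)\le\depth(R_\qq)\le\height(\qq)=\dim R_\qq$ (\cref{ss:depth}); together with $\dim R_\qq\le\ef(\qq)$ this collapses the whole chain to $\ef(\qq)=\grade(\qq)=\depth(R_\qq)=\height(\qq)$. Letting $\qq$ range over $\Spec R$: the equalities $\depth(R_\qq)=\height(\qq)$ say exactly that $R$ is Cohen-Macaulay (\cref{ss:depth}); the identity $\ef=\height$ together with the bound $\ef\le n$ yields $\dim R=\sup_{\qq\in\Spec R}\height(\qq)\le n<\infty$; and, finally, $\ef=\height$ means $\Tcal=\Tcal_{\height}$, so by the bijectivity of the correspondence in \cref{ss:classification} and the criterion for tilting equivalence recalled in \cref{ss:tilting}, $T$ is equivalent to $T_{\height}$.

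The step I expect to be the main obstacle is the local estimate: passing from product-completeness of $T_\qq$ to the uniform bound on the projective dimension of flat $R_\qq$-modules. It relies on combining the a priori inclusion $\Acal(T_\qq)\subseteq\Pcal_{\ef(\qq)}$ supplied by the tilting structure with the fact that product-completeness forces $\Acal(T_\qq)$ to be definable, hence direct-limit-closed, hence large enough to absorb all flat modules. Everything after that is bookkeeping with characteristic functions and the inequalities of \cref{ss:depth}. A secondary technical point is the compatibility of the characteristic-function classification with localization (that the characteristic function of $T_\qq$ equals $\ef|_{\Spec{R_\qq}}$), which I would either quote from \cite{AHPS14,HNS} or verify directly from the description of tilting classes in \cref{ss:classification}.
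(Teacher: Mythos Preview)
Your reduction to a local problem and the identification of the characteristic function of $T_\qq$ as $\ef|_{\Spec{R_\qq}}$ are fine, and the chain $\Fcal_0\subseteq\Acal(T_\qq)\subseteq\Pcal_{\ef(\qq)}$ is correctly derived from definability of $\Acal(T_\qq)$ and the coresolution description of $\Acal$. The gap is the step you treat as routine: from ``every flat $R_\qq$-module has projective dimension at most $\ef(\qq)$'' you conclude $\dim R_\qq\le\ef(\qq)$ ``by Raynaud--Gruson''. Raynaud--Gruson (\cite{RG71}, cf.\ \cref{ss:RG}) gives $\Findim(R_\qq)=\dim R_\qq$, i.e.\ there exists \emph{some} module of finite projective dimension with $\pd=\dim R_\qq$; it does not assert that such a module can be taken flat. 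Equivalently, you are implicitly using $\sup\{\pd_{R_\qq}F:F\ \text{flat}\}=\dim R_\qq$, which is a separate statement not supplied by \cite{RG71} and not justified in your argument. Note that the case $\ef(\qq)=0$ does follow (flat modules projective forces a noetherian local ring to be artinian), but already the case $\ef(\qq)=1$ is not obvious, and this is exactly where the paper has to work hardest. You have also misidentified the obstacle: getting flats into $\Acal(T_\qq)$ is immediate from definability; it is the passage from a projective-dimension bound on flats to a Krull-dimension bound that carries the content.

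The paper's proof proceeds quite differently. After the same localization step (\cref{prodcomp-localize}), it argues by contradiction and performs a case analysis on $\dim R$. The case $\dim R=1$ is handled by the perfect-ring argument above. The case $\dim R=2$ (where $T$ is $1$-tilting) invokes an external structural result on enveloping $1$-tilting classes \cite[Theorem 8.7]{BLG22}. Dimensions $>2$ are reduced to dimension $2$ by modding out a regular sequence of length $\dim R-2$ and using the silting machinery of \cite{BHM} to transport product-completeness to the quotient. If your inequality for flat modules could be established directly, your argument would indeed be shorter and more conceptual than the paper's; but as written, that inequality is exactly the missing piece.
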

\begin{proof}
  The implication $(i) \implies (ii)$ is \cref{T-prod-compl}. 
  
  Let $\ef: \Spec R \to \Zbb$ be the characteristic function corresponding to the tilting class $T\Perp{}$ as in \cref{ss:classification}. Then $0 \leq \ef \leq \grade$. We claim that $\ef = \height$. Note that this already implies that $R$ is Cohen-Macaulay, because then $\height = \ef \leq \grade \leq \height$ and so $\height = \grade$. 

  To show this, let $\mm$ be the minimal among prime ideals such that $\ef$ restricted to $\Spec {R_\mm}$ is not the height function on $\Spec {R_\mm}$. Using \cref{prodcomp-localize}, this reduces the question to $(R,\mm)$ being a local ring and $\ef(\pp) = \height(\pp)$ for all prime ideals $\pp$ apart from the maximal ideal $\mm$. The claim is trivial if $\dim(R) = 0$. If $\dim(R) = 1$ then $\ef(\pp) = 0$ for all $\pp \in \Spec R$, and so $T$ is a projective generator. Since $T$ is product-complete, $R$ is artinian, a contradiction. If $\dim(R) = 2$, then $\ef(\pp) = 0$ for any minimal prime $\pp \in \Spec R$ and $\ef(\pp) = 1$ otherwise. Therefore, $T$ is a 1-tilting module, that is, $\pd_{R}T = 1$ (see \cref{ss:classification}). Since $T$ is product-complete, its induced tilting class is enveloping in $\Mod{R}$ (see \cref{T-prod-compl}). Therefore, \cite[Theorem 8.7]{BLG22} implies that $R/\pp$ is artinian for any $\pp$ non-minimal. This is a contradiction with $\dim(R)=2$.
  
  Assume finally that $\dim(R) > 2$ and put $k=\dim(R)-2$. Let $I$ be any ideal of $R$ generated by a regular sequence of length $k$. Then any prime ideal in $V(I)$ has height at least $k$. By the description \cref{ss:classification}, we have $\Tor_i^R(R/I,T)=0$ for all $i<k$. Since $\pd_R(R/I)=k$ and $\height(I) = k$, it follows that the cohomology of $T \otimes_R^\mathbf{L} R/I$ vanishes outside of degree $-k$. By \cite[Theorem 4.2]{BHM}, $T \otimes_R^\mathbf{L} R/I$ is a \newterm{silting object} in $\D(R/I)$, and therefore $T \otimes_R^\mathbf{L} R/I[-k]$ is isomorphic in $\D(R/I)$ to a tilting $R/I$-module $\overline{T}=\Tor_{k}^R(R/I,T)$, see \cite[Remark 2.7]{HNS}.
  
  Since $R/I$ is a finitely generated $R$-module, the functor $\Tor_{k}^R(R/I,-)$ preserves products and restricts to a functor $\Add(T) \to \Add(\overline{T})$. Let us show that $\overline{T}$ is product-complete. For that, it is enough to show that for any collection of cardinals $\lambda_i, i \in I$, the product $\prod_{i \in I}\overline{T}^{(\lambda_i)}$ belongs to $\Add(\overline{T})$. Since $T$ is product-complete, the $R$-module $\prod_{i \in I}T^{(\lambda_i)}$ belongs to $\Add(T)$. But since $\Tor_{k}^R(R/I,\prod_{i \in I}T^{(\lambda_i)}) \cong \prod_{i \in I}\Tor_{k}^R(R/I,T)^{(\lambda_i)} = \prod_{i \in I}\overline{T}^{(\lambda_i)}$, the claim follows. The characteristic function $\overline{\ef}$ corresponding to the tilting $R/I$-module $\overline{T}$ can be computed as $\overline{\ef}(\overline{\qq}) = \ef(\qq) - k$ for any $\overline{\qq} \in \Spec {R/I}$, where $\qq \in \Spec R$ is the unique prime such that $\qq \in V(I)$ and $\qq/I = \overline{\qq}$, see \cite[Theorem 5.7]{BHM}. It follows that $\overline{\ef}$ values to 1 on every non-minimal prime ideal of $\Spec{R/I}$. The same proof as above applied to the 1-tilting $R/I$-module $\overline{T}$ shows that $\dim(R/I) \leq 1$, but at the same time $\dim(R/I) = 2$ by the choice of $I$ (see \cite[Lemma 10.60.14]{Stacks}), a contradiction.
  
  We proved that $\ef = \height$. Since $\ef$ is characteristic, it is bounded above, and therefore $\dim(R)<\infty$. Finally, $T$ is equivalent to $T_{\height}$ by \cref{ss:classification}.
\end{proof}
\section{Cohen-Macaulay Hom injective dimension}\label{s:cmhomdim}
A module over $R$ is said to be \newterm{Gorenstein injective} if it is a cocycle in an acyclic complex of injective modules $Q$ such that $\Hom_R(E,Q)$ is acyclic for any injective module $E$. A module is \newterm{Gorenstein flat} if it is a cocycle in an acyclic complex $F$ of flat modules such that $I \otimes_R F$ is acyclic for any injective module $I$. We denote by $\Gid_R$ and $\Gfd_R$ the Gorenstein injective and flat dimension of $R$-modules or $R$-complexes, see e.g. \cite{CKL17}. For $n \geq 0$, we let $\Gcal\Ical_n = \{M \in \Mod R \mid \Gid_R(M) \leq n\}$, and $\Gcal\Fcal_n = \{M \in \Mod R \mid \Gfd_R(M) \leq n\}$. A local ring $(R,\mm,k)$ is Gorenstein if and only if $\Gid_R(k)<\infty$ if and only if $\Mod R = \Gcal\Ical_{\dim(R)}$, and the same is true for the Gorenstein flat dimension. This extends the classical fact that a local ring $R$ is regular if and only if $\id_R(k)<\infty$ if and only if $\Mod R = \Ical_{\dim(R)}$. See \cite[\S 5, \S 6]{C00} for details about Gorenstein injective and flat dimensions.

There are notions of Cohen-Macaulay injective dimensions available in the literature which aim to extend the above situation to Cohen-Macaulay rings. Holm-J{\o}rgensen in \cite{HJ07} introduced the following version of Cohen-Macaulay injective dimension. Recall that a finitely generated module $C$ is called \newterm{semidualizing} if the homothety map $R \to \RHom_R(C,C)$ is an isomorphism. If in addition $\id_R C < \infty$, we call $C$ a \newterm{dualizing module}. Recall that if $R$ admits a dualizing module then it is Cohen-Macaulay of finite Krull dimension \cite[0AWS]{Stacks}, but the converse is not true, e.g. \cite[Proposition 3.1]{FR70} or \cite[Example 6.1]{Nis12}. We denote by $R \ltimes C$ the trivial extension of $R$ by $C$, which is a module-finite commutative $R$-algebra. Then the Cohen-Macaulay injective dimension in the sense of \cite{HJ07} is defined as 
$$\CMDid_R(M) = \inf \{\Gid_{R \ltimes C}(M) \mid C \text{ a semidualizing $R$-module}\}.$$ 

This notion satisfies several desiderata. By \cite[Corollary 4.10, 4.15]{SSY20}, we always have the inequalities $\CH_R(M) \leq \CMDid_R(M) \leq \Gid_R(M) \leq \id_R(M)$ for any module $M$. Furthermore, if any of these values is finite, then it is equal to all of the values to its left \cite{Cho76}, \cite{CS10}, \cite[Lemma 4.14]{SSY20}. The value $\CMDid_R(M)$ is finite for all $R$-modules $M$ if and only if $R$ (is Cohen-Macaulay and) admits a dualizing module \cite[Theorem 5.1]{HJ07}. Combined with \cref{T-descr} this yields immediately that the minimal tilting class consists precisely of the Cohen-Macaulay injective $R$-modules in this case.
\begin{cor} 
  Let $R$ be a Cohen-Macaulay ring admitting a dualizing module. Then $\Tcal_{\height} = \{M \in \Mod R \mid \CMDid_R(M) \leq 0\}$. 
\end{cor}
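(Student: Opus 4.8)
The plan is to reduce the statement to \cref{T-descr} by identifying the Chouinard invariant with the Holm--J{\o}rgensen Cohen--Macaulay injective dimension under the dualizing module hypothesis. First, recall that a ring admitting a dualizing module is automatically Cohen--Macaulay of finite Krull dimension, so \cref{T-descr} applies and gives $\Tcal_{\height} = \{M \in \Mod R \mid \CH_R(M) \le 0\}$. Hence it suffices to show that $\{M \in \Mod R \mid \CH_R(M) \le 0\} = \{M \in \Mod R \mid \CMDid_R(M) \le 0\}$.

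For this I would argue that in fact $\CH_R(M) = \CMDid_R(M)$ for every $R$-module $M$. Since $R$ admits a dualizing module, \cite[Theorem 5.1]{HJ07} ensures $\CMDid_R(M) < \infty$ for every $M$. Invoking the chain $\CH_R(M) \le \CMDid_R(M) \le \Gid_R(M) \le \id_R(M)$ together with the fact that finiteness of any one of these quantities forces it to agree with all the quantities to its left (\cite{Cho76}, \cite{CS10}, \cite[Lemma 4.14]{SSY20}), the finiteness of $\CMDid_R(M)$ immediately yields $\CH_R(M) = \CMDid_R(M)$. The degenerate case $M = 0$ is harmless, as both invariants equal $-\infty$ by convention. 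Passing to the subclasses of modules with invariant at most $0$ on both sides and combining with \cref{T-descr} then finishes the proof.

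I do not expect a genuine obstacle here: the argument is a straightforward assembly of \cref{T-descr} with facts already recorded in the preceding discussion. The only point worth a second glance is that the equality $\CH_R = \CMDid_R$ is used solely for honest modules, which is exactly the generality in which it is stated; no passage to $R$-complexes is needed for the statement as phrased.
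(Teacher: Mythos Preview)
Your proposal is correct and mirrors the paper's own argument: both deduce the corollary from \cref{T-descr} together with the chain $\CH_R \leq \CMDid_R$ and the fact that $\CMDid_R(M)<\infty$ for all $M$ when $R$ admits a dualizing module, forcing $\CH_R = \CMDid_R$.
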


To be able to cover cases in which a dualizing module is absent, a different definition of Cohen-Macaulay dimensions is necessary. The following is recently due to Sahandi, Sharif, and Yassemi \cite{SSY20}, advancing the original approach of Gerko \cite{G01} for CM-dimension of finitely generated modules. A \newterm{CM-deformation} is a surjective local ring morphism $Q \to S$ such that $\grade_Q(S) = \Gfd_Q(S)$. Note that we always have $\grade_Q(S) \leq \Gfd_Q(S)$ (\cite[p. 1168]{G01}) and that $\grade_Q(S)$ is always a finite value. A \newterm{CM-quasi-deformation} is a diagram $R \to S \gets Q$ of local ring morphisms such that $R \to S$ is flat and $Q \to S$ is a CM-deformation. A typical example of such a diagram if $R$ is local Cohen-Macaulay is $R \to \widehat{R} \gets \widehat{R} \ltimes \omega_{\widehat{R}}$, where $R \to \widehat{R}$ is the completion map and $\omega_{\widehat{R}}$ is a dualizing module over $\widehat{R}$, which always exists by the Cohen structure theorem \cite[Theorem 29.4(ii)]{Mat89}. Note that for this particular CM-quasi-deformation, $\grade_{\widehat{R}\ltimes \omega_{\widehat{R}}}(\widehat{R}) = 0$, \cite[Lemma 3.6]{G01}. The Cohen-Macaulay injective dimension in the sense of \cite{SSY20} is defined as 
$$\CMSid_R(M) = $$
$$= \inf\{\Gid_Q(M \otimes_R S) - \Gfd_Q(S) \mid R \to S \gets Q \text{ is a CM-quasi-deformation}\}.$$
This notion always satisfies $\CMSid_R(M) \leq \Gid_R(M)$ and indeed, that $\CMSid_R(M)$ is finite for all modules if and only if $R$ is Cohen-Macaulay \cite[Theorem 3.4]{SSY20}. However, other desiderata are shown in \cite{SSY20} only for $M$ with finitely generated cohomology. In an attempt to remedy this, we suggest the following definition.
\begin{dfn}\label{d:CMI}
  For a local ring $R$ and any $R$-complex $M$ the \newterm{Cohen-Macaulay Hom} \newterm{injective dimension} is defined as follows:
  $$\CMHomid_R(M) = $$
  $$=\inf \left \{\Gid_Q(\RHom_R(S,M)) - \Gfd_Q(S) \middle\vert \begin{tabular}{ccc}$R \to S \gets Q$ is a\\ CM-quasi-deformation \end{tabular}\right \}.$$ When $R$ is not local, we extend the definition by setting 
  $$\CMHomid_R(M) = \sup \{\CMHomid_{R_\mm}(M_\mm) \mid \mm \text { maximal ideal}\}.$$
\end{dfn}
\begin{rmk}
  Our modified definition \cref{d:CMI} takes the same approach as the recent work of Sather-Wagstaff and Totushek \cite{SWT21} on complete intersection Hom injective dimension: We replaced the coefficient extension $- \otimes_R S$ with respect to the flat morphism $R \to S$ by the derived coefficient coextension functor $\RHom_R(S,-)$. The intuition here is rather straightforward: While $- \otimes_R S$ does not preserve even the ordinary injective dimension, $\RHom_R(S,-)$ preserves and reflects both injective \cite{CK16} and Gorenstein injective dimensions \cite{CS10}, see also \cref{r:fibres}.
\end{rmk}
Similarly as the Cohen-Macaulay flat dimension in \cite[Proposition 3.13]{SSY20}, our definition stays the same when we restrict to a special type of CM-quasi-deformations. Note that unlike in the case of $\CMSid_R$ in \cite[Proposition 3.12]{SSY20}, we do not need to restrict to finitely generated modules here. Recall that if $(R,\mm,k)$ is a local ring, the \newterm{closed fibre} of a local morphism $R \to S$ is the ring $S \otimes_R k$.
\begin{lemma}\label{artinian-fibre}
  Let $(R,\mm)$ be a local ring. For any cohomologically bounded complex $M$, we have:
  $$\CMHomid_R(M) = $$
  $$=\inf \left \{\Gid_Q(\RHom_R(S,M)) - \Gfd_Q(S) \middle\vert \begin{tabular}{ccc}$R \to S \gets Q$ is a\\ CM-quasi-deformation \\ such that the closed fibre of \\ $R \to S$ is artinian \end{tabular}\right \}.$$ 
\end{lemma}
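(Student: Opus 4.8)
The plan is to show the two infima agree by proving that the right-hand side (the infimum over CM-quasi-deformations with artinian closed fibre) is \emph{not} larger than $\CMHomid_R(M)$; the reverse inequality is automatic since we are taking an infimum over a smaller family of diagrams. So let $R \to S \gets Q$ be an arbitrary CM-quasi-deformation realizing (or approximating) $\CMHomid_R(M)$, and I would manufacture from it a new CM-quasi-deformation $R \to S' \gets Q'$ whose closed fibre is artinian and for which the quantity $\Gid_{Q'}(\RHom_R(S',M)) - \Gfd_{Q'}(S')$ is no larger. The natural move is to divide out by a maximal ideal of the closed fibre: let $\mathfrak n$ be a maximal ideal of $S$ lying over $\mathfrak m$, set $S' = S_{\mathfrak n}$ (so that $R \to S'$ is still flat and local), and take $Q'$ to be the localization of $Q$ at the contraction of $\mathfrak n$. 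One should first note that $R \to S'$ still has, after a further quotient, an artinian closed fibre — more precisely, one replaces $S'$ by a quotient $S'/\mathfrak a S'$ where $\mathfrak a$ is generated by a lift of a system of parameters of the (now local) closed fibre $S' \otimes_R k$, and correspondingly pushes out the deformation. The point is that a finite-dimensional local $k$-algebra modulo a system of parameters of itself is artinian.

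The key computations to check are, first, that the new diagram is still a CM-quasi-deformation, i.e. $Q' \to S'$ (or its quotient) is still surjective with $\grade_{Q'}(S') = \Gfd_{Q'}(S')$; this uses that grade and Gorenstein flat dimension localize well and behave predictably under quotient by a $Q'$-regular sequence that is also $S'$-regular, using the flatness of $R \to S$ to guarantee that a regular sequence on the closed fibre lifts to an $S$-regular sequence. Second, one must control how $\RHom_R(S,M)$ changes: passing from $S$ to $S' = S_{\mathfrak n}$ localizes $\RHom_R(S,M)$ at $\mathfrak n$, and $\Gid_Q$ of a module localizes to $\Gid_{Q_{\mathfrak n}}$ of its localization (the supremum defining $\Gid$ over $Q$ is attained at primes, and it can only drop under localization); quotienting $S'$ by the lifted parameters $\mathfrak a$ replaces $\RHom_R(S',M)$ by $\RHom_R(S'/\mathfrak a S', M) \cong \RHom_{S'}(S'/\mathfrak a S', \RHom_R(S',M))$, i.e. a Koszul-type coextension, and one checks via the standard behaviour of Gorenstein injective dimension under quotient by a regular sequence that $\Gid_{Q'}$ of this is bounded by $\Gid_Q(\RHom_R(S,M))$ minus the length of the sequence — which is exactly matched by the simultaneous drop $\Gfd_{Q'}(S') = \Gfd_Q(S) - (\text{that length})$ coming from the same regular sequence, so the difference $\Gid_{Q'} - \Gfd_{Q'}$ does not increase. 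Since $\CMHomid_R(M)$ is defined as a supremum over maximal ideals of the local quantities and we have not changed the localization at $\mathfrak m$, restricting to artinian closed fibres leaves the infimum unchanged.

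The main obstacle I expect is the bookkeeping around Gorenstein injective dimension under the coextension $\RHom_R(S'/\mathfrak a S',-)$: one needs that $\Gid_{Q'}\bigl(\RHom_{S'}(S'/\mathfrak a S', N)\bigr) = \Gid_{Q'}(N) - \ell$ (not merely $\le$) when $\mathfrak a$ is generated by a length-$\ell$ regular sequence that is regular on the relevant modules, so that the two quantities drop in lockstep and the difference is genuinely preserved rather than only bounded in one direction — otherwise one only gets an inequality in the wrong direction. This should follow from the cited facts that $\RHom_R(S,-)$ reflects Gorenstein injective dimension \cite{CS10} together with the interaction of Gorenstein injective dimension with depth/regular sequences (an analogue of the classical $\id$ formula $\id_R M = \id_{R/x}\RHom_R(R/x,M)+1$), but verifying it carefully at the level of complexes over the non-local intermediate rings, and checking that $\RHom_R(S',M)$ remains cohomologically bounded so these formulas apply, is where the real work lies.
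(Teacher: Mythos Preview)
Your overall plan (one inequality is automatic; for the other, manufacture from a given CM-quasi-deformation a new one with artinian closed fibre and no larger invariant) is correct, but the construction you propose has a genuine gap and is also more complicated than needed.

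First, your localization step is vacuous: in a CM-quasi-deformation $R \to S \gets Q$ the map $R \to S$ is a \emph{local} homomorphism, so $S$ is already local and ``localizing at a maximal ideal $\mathfrak n$ of $S$ lying over $\mm$'' changes nothing. All the work in your argument therefore falls on the quotient step.

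Second, the quotient step does not go through in general. You want to mod out $S$ by a lift $\mathfrak a$ of a system of parameters of the closed fibre $S \otimes_R k$, and you invoke the fact that a \emph{regular} sequence on the closed fibre lifts to an $S$-regular sequence via flatness of $R \to S$. But a system of parameters of the closed fibre is a regular sequence only when the closed fibre is Cohen-Macaulay, and nothing in the definition of a CM-quasi-deformation guarantees this. Without that, you cannot keep $R \to S/\mathfrak a S$ flat, nor control the shifts in $\Gid$ and $\Gfd$ ``in lockstep'' as you need. (There is also the secondary issue that your sequence lives in $S$, not $Q$, so tracking $\Gfd_Q$ requires lifting to $Q$ and further regularity assumptions there.)

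The paper avoids all of this with a much simpler construction: instead of localizing at a maximal prime and then quotienting, it localizes at a \emph{minimal} prime $\overline{\PP} \in \Spec S$ lying over $\mm$ (equivalently, a minimal prime of the closed fibre), and takes $\PP \in \Spec Q$ the unique prime over $\overline{\PP}$. Then $R \to S_{\overline{\PP}} \gets Q_{\PP}$ is the new diagram; its closed fibre is the localization of $S \otimes_R k$ at a minimal prime, hence automatically artinian. The verification that this is still a CM-quasi-deformation is a clean sandwich of inequalities: $\Gfd_{Q_{\PP}}(S_{\overline{\PP}}) \leq \Gfd_Q(S) = \grade_Q(S) \leq \grade_{Q_{\PP}}(S_{\overline{\PP}}) \leq \Gfd_{Q_{\PP}}(S_{\overline{\PP}})$, forcing equalities throughout. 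For the Gorenstein injective dimension, one uses the adjunction $\RHom_R(S_{\overline{\PP}},M) \cong \RHom_Q(Q_{\PP},\RHom_R(S,M))$ together with \cite[Theorem 1.7]{CS10} to get $\Gid_{Q_{\PP}}\RHom_R(S_{\overline{\PP}},M) \leq \Gid_Q\RHom_R(S,M)$. No regular sequences, no quotients, no Cohen-Macaulay hypothesis on the fibre.
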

\begin{proof}
  Let $R \to S \gets Q$ be a CM-quasi-deformation such that $\CMHomid_R(M) = \Gid_Q(\RHom_R(S,M)) - \Gfd_Q(S)$. Let $\overline{\PP} \in \Spec S$ be minimal such that $\overline{\PP} \cap R = \mm$, and let $\PP \in \Spec Q$ be the unique prime lying over $\overline{\PP} \in \Spec S$. Now $R \to S_{\overline{\PP}} \gets Q_{\PP}$ is a CM-quasi-deformation with $R \to S_{\overline{\PP}}$. Indeed, observe that $S_{\overline{\PP}} = S \otimes_Q Q_{\PP}$ implies $\Gfd_{Q_{\PP}}S_{\overline{\PP}} \leq \Gfd_Q(S)$ and $\grade_{Q_{\PP}}(S_{\overline{\PP}}) \geq \grade_Q(S)$. Since $Q \to S$ is a CM-deformation, we have 
  \begin{equation}\label{E:Gperf}
  \Gfd_{Q_{\PP}}S_{\overline{\PP}} \leq \Gfd_Q(S) = \grade_Q(S) \leq \grade_{Q_{\PP}}(S_{\overline{\PP}}) \leq \Gfd_{Q_{\PP}}S_{\overline{\PP}}.
  \end{equation}

  Similarly, it follows that $\RHom_R(S_{\overline{\PP}},M) \cong \RHom_Q(Q_{\PP},\RHom_R(S,M))$, and thus we get $\Gid_{Q_{\PP}}\RHom_R(S_{\overline{\PP}},M) \leq \Gid_Q(\RHom_R(S,M))$ using \cite[Theorem 1.7]{CS10}. Together with the previous paragraph,  we have $\Gid_{Q_{\PP}}\RHom_R(S_{\overline{\PP}},M) - \grade_{Q_{\PP}}(S_{\overline{\PP}}) \leq \Gid_Q(\RHom_R(S,M)) - \grade_Q(S) = \CMHomid_R(M)$, and therefore $\CMHomid_R(M)$ attains its value also when computed using the CM-quasi-deformation $R \to S_{\overline{\PP}} \gets Q_{\PP}$. By the choice of $\overline{\PP}$, $R \to S_{\overline{\PP}}$ has an artinian closed fibre.
\end{proof}
\begin{rmk}\label{R:closed-fibre}
  Let $R \to S$ be a flat local morphism with an artinian closed fibre (in fact, Cohen-Macaulay closed fibre is enough). Then $R$ is Cohen-Macaulay if and only if $S$ is Cohen-Macaulay, see \cite[p. 181, Corollary]{Mat89}.
\end{rmk}
Let us check that our definition still characterizes Cohen-Macaulay rings.

\begin{prop}\label{CMinj}
  The following are equivalent for a commutative noetherian ring:
  \begin{enumerate}
    \item[(i)] $R$ is Cohen-Macaulay,
    \item[(ii)] $\CMHomid_{R_\mm}(M_\mm) < \infty$ for all maximal ideals $\mm$ and all cohomologically bounded $R$-complexes $M$, 
    \item[(iii)] $\CMHomid_{R_\mm}(M_\mm) < \infty$ for all maximal ideals $\mm$ and all $R$-modules $M$,
    \item[(iv)]  $\CMHomid_{R_\mm}(k(\mm)) < \infty$ for all maximal ideals $\mm$.
  \end{enumerate}
\end{prop}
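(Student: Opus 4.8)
The plan is to establish the implications (i) $\implies$ (ii) $\implies$ (iii) $\implies$ (iv) $\implies$ (i). The two middle ones are immediate --- a module is a cohomologically bounded complex, and $(R/\mm)_\mm = k(\mm)$ --- so the content lies in (i) $\implies$ (ii) and (iv) $\implies$ (i). Since $R$ is Cohen--Macaulay precisely when every localization $R_\mm$ is, and since conditions (ii)--(iv) are already phrased localization by localization, I would fix a maximal ideal $\mm$ and argue entirely inside the noetherian local ring $R_\mm$ (whose Krull dimension is automatically finite), using that every $R_\mm$-module --- resp. cohomologically bounded $R_\mm$-complex --- is of the form $M_\mm$ for some $M \in \Mod R$.

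For (i) $\implies$ (ii): assuming $R_\mm$ Cohen--Macaulay of dimension $d$, I would take the standard CM-quasi-deformation $R_\mm \to \widehat{R_\mm} \gets Q$ with $Q = \widehat{R_\mm} \ltimes \omega$, where $\omega$ is a dualizing module of $\widehat{R_\mm}$ --- available since $\widehat{R_\mm}$ is complete and, by \cref{R:closed-fibre}, Cohen--Macaulay. Then $Q$ is Gorenstein of Krull dimension $d$ (a theorem of Reiten, $\omega$ being a dualizing module over the Cohen--Macaulay ring $\widehat{R_\mm}$) and $\Gfd_Q(\widehat{R_\mm}) = \grade_Q(\widehat{R_\mm}) = 0$. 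For any cohomologically bounded $M$, Raynaud--Gruson (\cref{ss:RG}) gives $\pd_{R_\mm} \widehat{R_\mm} \le d < \infty$, so $\RHom_{R_\mm}(\widehat{R_\mm}, M_\mm)$ is again cohomologically bounded; since every cohomologically bounded complex over a Gorenstein ring of finite Krull dimension has finite Gorenstein injective dimension, it follows that $\CMHomid_{R_\mm}(M_\mm) \le \Gid_Q(\RHom_{R_\mm}(\widehat{R_\mm}, M_\mm)) < \infty$.

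For (iv) $\implies$ (i): given $\CMHomid_{R_\mm}(k(\mm)) < \infty$, \cref{artinian-fibre} supplies a CM-quasi-deformation $R_\mm \to S \gets Q$ with artinian closed fibre $\bar S = S \otimes_{R_\mm} k(\mm)$ and with $\Gid_Q(\RHom_{R_\mm}(S, k(\mm))) < \infty$ (here the finite term $\Gfd_Q(S)$ subtracted in the definition is irrelevant). The first step is to identify this complex: Hom--tensor adjunction, together with flatness of $S$ over $R_\mm$, gives an $S$-linear --- hence $Q$-linear, via $Q \to S$ --- isomorphism $\RHom_{R_\mm}(S, k(\mm)) \cong \Hom_{k(\mm)}(\bar S, k(\mm))$, which is a nonzero injective $\bar S$-module and therefore has the indecomposable injective $E_{\bar S}(l)$ ($l$ the residue field of $\bar S$) as a direct summand. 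By Matlis duality over the artinian local ring $\bar S$, $E_{\bar S}(l)$ has finite length, so it is a nonzero finitely generated $Q$-module; as finite Gorenstein injective dimension is inherited by direct summands, $\Gid_Q(E_{\bar S}(l)) < \infty$. Thus $Q$ admits a nonzero finitely generated module of finite Gorenstein injective dimension, which forces $Q$ to be Cohen--Macaulay --- this is the crux, discussed below. A depth/dimension count along $Q \to S$ (Auslander--Buchsbaum for $\Gfd$, together with $\grade = \height$ over the Cohen--Macaulay ring $Q$) then yields that $S$ is Cohen--Macaulay, and finally $R_\mm$ is Cohen--Macaulay by \cref{R:closed-fibre}, since $R_\mm \to S$ is flat with artinian closed fibre.

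The main obstacle is the step just used: that a local ring admitting a nonzero finitely generated module $\overline{M}$ of finite Gorenstein injective dimension must be Cohen--Macaulay --- the Gorenstein-injective analogue of Bass's theorem. I would prove it after passing to the completion, which carries a dualizing complex $D$: finiteness of the Gorenstein injective dimension places $\overline{M}$ in the Bass class, so $\overline{M} \simeq D \otimes^{\mathbf{L}} \RHom(D, \overline{M})$ with $X := \RHom(D, \overline{M})$ nonzero, homologically finite and of finite Gorenstein dimension; the amplitude inequality $\sup(X \otimes^{\mathbf{L}} D) - \inf(X \otimes^{\mathbf{L}} D) \ge \sup D - \inf D$ then forces $\sup D = \inf D$, i.e. $D$ is a shift of a module, i.e. the completion --- hence $Q$ --- is Cohen--Macaulay. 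The remaining points --- the adjunction identification of $\RHom_{R_\mm}(S, k(\mm))$ with the $k(\mm)$-dual of the closed fibre, and the depth/dimension bookkeeping along $Q \to S$ --- are routine.
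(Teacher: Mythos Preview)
Your argument is correct, and the route for $(iv)\Rightarrow(i)$ is genuinely different from the paper's.

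For $(i)\Rightarrow(ii)$ both arguments are close: you take $Q=\widehat{R_\mm}\ltimes\omega$ Gorenstein, while the paper chooses a regular $Q$ surjecting onto $\widehat{R_\mm}$ via the Cohen structure theorem (citing \cite[Theorem 3.9]{G01}). Either choice works.

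For $(iv)\Rightarrow(i)$ the strategies diverge. The paper does not use \cref{artinian-fibre}; it writes $k\cong\Hom_R(k,E(k))$ to identify $\RHom_R(S,k)\cong\Hom_R(S\otimes_R k,E(k))$, applies Holm's duality $\Gid_Q(\Hom_R(-,E(k)))\leftrightarrow\Gfd_Q(-)$ to get $\Gfd_Q(S\otimes_R k)<\infty$, and then splits off the residue field $K$ of $Q$ from $S\otimes_R K\cong(S\otimes_R k)^{(X)}$ to obtain $\Gfd_Q(K)<\infty$. Masek's criterion then gives $Q$ \emph{Gorenstein}, and the conclusion follows as in \cite[Theorem 3.4]{SSY20}. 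Your approach instead uses the artinian closed fibre to isolate a \emph{finitely generated} $Q$-module $E_{\bar S}(l)$ of finite $\Gid_Q$, and invokes the Gorenstein analogue of Bass's theorem to conclude $Q$ is Cohen--Macaulay. This is a heavier result than Masek's (it ultimately rests on the New Intersection Theorem), and the paper's route avoids it entirely.

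One caution about your sketch of that key step: the amplitude inequality $\mathrm{amp}(X\otimes^{\mathbf L}D)\geq\mathrm{amp}(D)$ is standard only for $X$ of finite \emph{flat} dimension, not finite G-dimension. The statement you want---a nonzero finitely generated module of finite $\Gid$ forces Cohen--Macaulayness---is indeed a theorem (due to Takahashi under a dualizing-complex hypothesis, which your passage to the completion supplies), but its proof is more delicate than a direct appeal to Iversen's amplitude inequality. You should also note that transferring $\Gid_Q(E_{\bar S}(l))<\infty$ to $\widehat Q$ requires an ascent result for $\Gid$ along the completion map; this is known (the closed fibre is a field, hence Gorenstein), but deserves a citation. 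With those two points sourced, your argument stands.
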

\begin{proof}
  Since Cohen-Macaulay-ness is checked on the stalks $R_\mm$ for maximal ideals $\mm$, the statement reduces to the case of a local ring $(R,\mm,k)$.

  $(i) \to (ii)$: Since $R$ is Cohen-Macaulay, so is $\widehat{R}$, and therefore there is a CM-quasi-deformation of the form $R \to \widehat{R} \gets Q$ with $Q$ regular (see \cite[Theorem 3.9]{G01}). We have $\pd_R \widehat{R} < \infty$ (\cref{ss:RG}), and so for any cohomologically bounded $M$, $\RHom_R(\widehat{R},M)$ has bounded cohomology. Since $Q$ is regular, we have that $\CMHomid_R(M) \leq \Gid_Q \RHom_R(\widehat{R},M) = \id_Q \RHom_R(\widehat{R},M)  < \infty$.

  $(ii) \to (iii)$: Trivial.

  $(iii) \to (iv)$: Trivial.

  $(iv) \to (i)$: By the assumption, there is a CM-quasi-deformation $R \to S \gets Q$ with $R \to S$ such that $\Gid_Q(\RHom_R(S,k))<\infty$. If $E(k)$ denotes the minimal injective cogenerator of $\Mod R$, we have $k \cong \Hom_R(k,E(k))$. It follows that $\RHom_R(S,k) = \Hom_R(S,k) \cong \Hom_R(S \otimes_R k,E(k))$. Using \cite[Theorem 3.6]{Holm}, we get $\Gfd_Q(S \otimes_R k) < \infty$. Let $K$ be the residue field of $S$, and note that $K \cong k^{(X)}$ as $R$-modules for some set $X$. Then we also have $\Gfd_Q(S \otimes_R K) = \Gfd_Q((S \otimes_R k)^{(X)}) < \infty$.
  
  Consider the canonical map $i: K \to S \otimes_R K$ obtained as $i = (R \to S) \otimes_R K$. Since $R \to S$ is a pure monomorphism in $\Mod R$ (see \cite[Lemma 35.4.8]{Stacks}), $i$ is a monomorphism in $\Mod K$, and thus it splits. Therefore, we have $\Gfd_Q(K) < \infty$. Since $K$ is also the residue field of $Q$, it follows that $Q$ is a Gorenstein ring by \cite[Theorem 17]{Masek}, and then $R$ is Cohen-Macaulay by the same argument as in \cite[Theorem 3.4]{SSY20}.
\end{proof}

\begin{lemma}\label{CHCMdef}
  Let $Q \to S$ be a CM-deformation. For any cohomologically bounded $S$-complex $M$ we have $\CH_Q(M) - \Gfd_Q(S) = \CH_S(M)$.
\end{lemma}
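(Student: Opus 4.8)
The statement to prove is the equality $\CH_Q(M) - \Gfd_Q(S) = \CH_S(M)$ for a CM-deformation $Q \to S$ and a cohomologically bounded $S$-complex $M$. The plan is to reduce the Chouinard invariants on both sides to their local expressions and then match the contributions prime-by-prime. Recall that $\CH_Q(M) = \sup\{\depth Q_\qq - \width_{Q_\qq}(M_\qq) \mid \qq \in \Spec Q\}$, and analogously for $\CH_S(M)$; since $M$ is an $S$-complex, only primes $\qq$ in the image of $\Spec S \to \Spec Q$ (equivalently, $\qq \supseteq \ker(Q \to S)$) can contribute, as $M_\qq = 0$ otherwise. So write $I = \ker(Q \to S)$, and for $\PP \in \Spec S$ let $\qq = \PP \cap Q$ be the corresponding prime of $Q$.

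First I would handle the width terms. The width of an $S$-complex over $Q_\qq$ and over $S_\PP$ should agree, because $\width$ only depends on the underlying $Q$-module structure through $- \otimes^\mathbf{L} k(\qq)$, and $k(\qq) \otimes_{Q_\qq}^\mathbf{L} M_\qq$ can be computed after base change along $Q_\qq \to S_\PP$ using that $k(\qq) \otimes_{Q_\qq} S_\PP$ has residue field $k(\PP)$; more precisely $\width_{Q_\qq}(M_\qq) = \width_{S_\PP}(M_\PP)$ since $M$ is an $S$-module and the Koszul/residue-field computation of width is insensitive to the deformation. Then the problem reduces to the depth side: I need $\depth Q_\qq - \Gfd_Q(S) = \depth S_\PP$ for the relevant primes, or rather that the supremum of $\depth Q_\qq - \width - \Gfd_Q(S)$ over $\qq$ equals the supremum of $\depth S_\PP - \width$ over $\PP$. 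The key input is that $Q \to S$ is a CM-deformation, so $\Gfd_Q(S) = \grade_Q(S) = \grade_Q(Q/I)$, which is a well-behaved finite invariant. Since $Q \to S$ is surjective with kernel $I$, by the Auslander–Buchsbaum-type formula for Gorenstein flat dimension (or since $S$ is a perfect $Q$-module in the relevant sense — indeed $\Gfd_Q(S) = \grade_Q(S)$ means $S$ behaves homologically like a perfect module of that grade), one gets $\depth Q_\qq = \depth S_\PP + \grade_{Q_\qq}(S_\PP)$ and, crucially, that $\grade_{Q_\qq}(S_\PP) = \grade_Q(S) = \Gfd_Q(S)$ for every $\PP$ lying over a $\qq$ in the support — this "perfection spreads out" fact being exactly the content used in the chain of inequalities \cref{E:Gperf} in the proof of \cref{artinian-fibre}.

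Assembling: for each $\PP \in \Spec S$ with $\qq = \PP \cap Q$, we get
$$\depth Q_\qq - \width_{Q_\qq}(M_\qq) - \Gfd_Q(S) = \big(\depth S_\PP + \Gfd_Q(S)\big) - \width_{S_\PP}(M_\PP) - \Gfd_Q(S) = \depth S_\PP - \width_{S_\PP}(M_\PP),$$
and taking the supremum over $\PP \in \Spec S$ — which, since primes of $Q$ not containing $I$ contribute $M_\qq = 0 = -\infty$ and may be discarded — yields $\CH_Q(M) - \Gfd_Q(S) = \CH_S(M)$. (One should double-check the edge case $M = 0$, where both sides are $-\infty$ by convention, and note $\Gfd_Q(S)$ is finite so subtracting it is harmless.)

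The main obstacle I anticipate is the depth identity $\depth Q_\qq = \depth S_\PP + \Gfd_Q(S)$ with the uniform value of the grade across all relevant primes. The uniformity $\grade_{Q_\qq}(S_\PP) = \Gfd_Q(S)$ is precisely what the CM-deformation hypothesis buys us via the squeeze $\Gfd_{Q_\qq}(S_\PP) \le \Gfd_Q(S) = \grade_Q(S) \le \grade_{Q_\qq}(S_\PP) \le \Gfd_{Q_\qq}(S_\PP)$, so this should go through exactly as in \cref{E:Gperf}; the depth-shift formula itself then follows from the fact that a module of finite Gorenstein flat dimension equal to its grade is "$Q$-perfect," for which one can cite the relevant result on Gorenstein dimensions and grade (e.g.\ the Auslander–Bridger–type equality, or \cite{C00}). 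Getting the width equality cleanly — i.e.\ confirming $\width_{Q_\qq}(M_\qq) = \width_{S_\PP}(M_\PP)$ for an $S$-complex — is the other point requiring a small argument, but it is standard since width is computed via the derived tensor with the residue field and is therefore a stalk-local, deformation-invariant quantity for modules already living over $S$.
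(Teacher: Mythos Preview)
Your proposal is correct and follows essentially the same route as the paper: both arguments match the Chouinard invariants prime-by-prime using the Auslander--Bridger formula $\depth Q_\qq = \depth_{Q_\qq}(S_\PP) + \Gfd_{Q_\qq}(S_\PP)$, the invariance of depth and width along the surjection $Q_\qq \to S_\PP$, and the uniformity $\Gfd_{Q_\qq}(S_\PP) = \Gfd_Q(S)$ from the squeeze \cref{E:Gperf}. The only cosmetic difference is that the paper cites \cite[Proposition 4.8]{SSY20} for the inequality $\CH_Q(M) - \Gfd_Q(S) \geq \CH_S(M)$ and runs the computation only for the reverse inequality, whereas you establish the pointwise equality directly and get both inequalities at once.
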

\begin{proof}
  The inequality $\CH_Q(M) - \Gfd_Q(S) \geq \CH_S(M)$ is proven in \cite[Proposition 4.8]{SSY20}. The other inequality actually also follows from the same proof. Indeed, let $\qq \in \Spec Q$ be such that $\CH_Q(M) = \depth(Q_\qq) - \width_{Q_\qq}(M_\qq)$. Clearly, we can choose $\qq \in \Supp(M) \subseteq \Supp(S)$, and let $\overline{\qq} \in \Spec S$ be the unique prime whose inverse image is $\qq$. The same computation as in the proof of \cite[Proposition 4.8]{SSY20} shows that 
    \begin{equation*}\begin{split}\CH_Q(M)  & = \depth(Q_\qq) - \width_{Q_\qq}(M_\qq) = \\ 
    & = \depth_{Q_\qq}(S_{\overline{\qq}}) + \Gfd_{Q_\qq}(S_{\overline{\qq}}) -\width_{Q_\qq}(M_\qq) = \\
   & = \depth(S_{\overline{\qq}}) - \width_{S_{\overline{\qq}}}(M_{\overline{\qq}}) + \Gfd_{Q_\qq}(S_{\overline{\qq}}) = \\
  & = \depth(S_{\overline{\qq}}) - \width_{S_{\overline{\qq}}}(M_{\overline{\qq}}) + \Gfd_{Q}(S) \leq \CH_S(M) +\Gfd_{Q}(S).
  \end{split}\end{equation*}
  We remark that, as in the proof of \textit{loc. cit.}, the second equality follows from the Auslander-Bridger formula \cite[Theorem 4.13]{AB69}, the third equality follows from surjectivity of $Q_\qq \to S_{\overline{\qq}}$ and \cite[Proposition 5.2(1)]{Iy99} (and its version for width, cf. \cite[\S 4]{FI03}), while the fourth equality is \cref{E:Gperf}.
\end{proof}

\begin{lemma}\label{CMidCHgen}
  Let $R$ be a local ring and $M$ a cohomologically bounded $R$-complex. Then:
  $$\CMHomid_R(M) = \inf \left \{\CH_S(\RHom_R(S,M)) \middle\vert \begin{tabular}{ccc}$R \to S \gets Q$ a CM-quasi-deformation\\ with $\Gid_Q(\RHom_R(S,M))<\infty$ \end{tabular} \right \}.$$
  If $\CMHomid_R(M) < \infty$, the infimum is attained at any CM-quasi-deformation $R \to S \gets Q$ such that $\CMHomid_R(M) = \Gid_Q(\RHom_R(S,M)) - \Gfd_Q(S)$.
\end{lemma}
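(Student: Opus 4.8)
The plan is to prove the pointwise identity
\[
\Gid_Q(\RHom_R(S,M)) - \Gfd_Q(S) = \CH_S(\RHom_R(S,M))
\]
for every CM-quasi-deformation $R \to S \gets Q$ with $\Gid_Q(\RHom_R(S,M)) < \infty$, and then pass to infima. The first displayed formula of the lemma will follow because a CM-quasi-deformation with $\Gid_Q(\RHom_R(S,M)) = \infty$ contributes the value $\infty$ to the infimum defining $\CMHomid_R(M)$ in \cref{d:CMI} (recall that $\Gfd_Q(S) = \grade_Q(S)$ is always finite), so such deformations may be discarded without changing the infimum; the ``moreover'' clause is then immediate, since a CM-quasi-deformation realizing $\CMHomid_R(M) = \Gid_Q(\RHom_R(S,M)) - \Gfd_Q(S) < \infty$ automatically has $\Gid_Q(\RHom_R(S,M)) < \infty$, so the pointwise identity applies to it.

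To prove the pointwise identity, set $N := \RHom_R(S,M)$, regarded as a complex of $S$-modules. First I would check that $N$ is cohomologically bounded whenever $\Gid_Q(N) < \infty$: boundedness above is built into finiteness of the Gorenstein injective dimension, while for boundedness below one uses that $S$ is flat over $R$, so $N$ may be computed as $\Hom_R(S,I)$ for a bounded-below injective resolution $I$ of $M$, giving $\inf N \ge \inf M > -\infty$. With $N$ a cohomologically bounded $S$-complex and $Q \to S$ a CM-deformation, \cref{CHCMdef} yields $\CH_Q(N) - \Gfd_Q(S) = \CH_S(N)$. On the other hand, $\Gid_Q(N) < \infty$ together with cohomological boundedness of $N$ forces $\CH_Q(N) = \Gid_Q(N)$ by the chain of equalities for Gorenstein injective dimension recalled before \cref{d:CMI} (cf.\ \cite{CS10}). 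Substituting the latter into the former gives the pointwise identity.

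I do not expect a genuine obstacle here: the argument is a direct combination of \cref{CHCMdef} with the known coincidence of $\CH$ and $\Gid$ in the finite case. The only point requiring a little care is the verification that $\RHom_R(S,M)$ is cohomologically bounded -- this is precisely the hypothesis under which both cited results are available -- and flatness of $R \to S$ is what guarantees that its cohomological infimum does not run off to $-\infty$.
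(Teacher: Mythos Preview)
Your proposal is correct and follows essentially the same route as the paper: establish the pointwise identity $\Gid_Q(\RHom_R(S,M)) - \Gfd_Q(S) = \CH_S(\RHom_R(S,M))$ via \cref{CHCMdef} together with the equality $\Gid_Q = \CH_Q$ in the finite case from \cite{CS10}, then take infima. If anything, you are more careful than the paper in explicitly verifying that $\RHom_R(S,M)$ is cohomologically bounded before invoking \cref{CHCMdef}; one minor quibble is that the boundedness below of $\RHom_R(S,M)$ follows simply from taking a bounded-below injective resolution of $M$ and does not actually require flatness of $R \to S$.
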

\begin{proof}
  Let $R \to S \gets Q$ be a CM-quasi-deformation with $\Gid_Q(\RHom_R(S,M)) < \infty$. Then $\Gid_Q(\RHom_R(S,M)) = \CH_Q(\RHom_R(S,M))$ by \cite[Theorem C]{CS10}. By \cref{CHCMdef}, we have $\CH_Q(\RHom_R(S,M)) - \Gfd_Q(S) = \CH_S(\RHom_R(S,M))$. It follows that $\CMHomid_R(M) \leq \CH_S(\RHom_R(S,M))$. 
  
  The second claim follows since for such a CM-quasi-deformation $R \to S \gets Q$ we have $\CMHomid_R(M) = \CH_S(\RHom_R(S,M))$ by the previous computation.
\end{proof}
\begin{lemma}\label{chain}
  Let $R$ be a local ring. For any cohomologically bounded $R$-complex $M$, we have the inequality $\CMHomid_R(M) \leq \CMDid_R(M).$
  Furthermore, if $\CMDid_R(M) < \infty$ then $\CH_R(M) = \CMHomid_R(M) = \CMDid_R(M)$.
\end{lemma}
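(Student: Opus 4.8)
The plan is to treat the two assertions separately. The equality $\CH_R(M)=\CMDid_R(M)$ that appears in the second assertion is free once $\CMDid_R(M)<\infty$, since the chain $\CH_R(M)\le\CMDid_R(M)\le\Gid_R(M)\le\id_R(M)$ collapses to the left of any finite entry; so the real work is to compare $\CMHomid_R$ with $\CMDid_R$.

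\smallskip
\noindent\emph{Step 1: $\CMHomid_R(M)\le\CMDid_R(M)$.} Fix a semidualizing $R$-module $C$. I would observe that $R\xrightarrow{\;\id_R\;}R\xleftarrow{\;\pi\;}R\ltimes C$, with $\pi$ the canonical projection, is a CM-quasi-deformation: $\id_R$ is flat and local, and $\pi$ is a CM-deformation because $\grade_{R\ltimes C}(R)=0=\Gfd_{R\ltimes C}(R)$. Here $\grade_{R\ltimes C}(R)=0$ since $\Hom_{R\ltimes C}(R,R\ltimes C)=\Ann_{R\ltimes C}(0\ltimes C)=0\ltimes C\neq 0$, using that a semidualizing module is faithful; and $\Gfd_{R\ltimes C}(R)=0$ because $R$ is a totally reflexive, hence Gorenstein flat, $R\ltimes C$-module, which is the basic fact underlying \cite{HJ07}. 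Feeding this CM-quasi-deformation into \cref{d:CMI} and using $\RHom_R(R,M)\simeq M$ gives $\CMHomid_R(M)\le\Gid_{R\ltimes C}(\RHom_R(R,M))-\Gfd_{R\ltimes C}(R)=\Gid_{R\ltimes C}(M)$; taking the infimum over all semidualizing $C$ yields $\CMHomid_R(M)\le\CMDid_R(M)$.

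\smallskip
\noindent\emph{Step 2: equality when $\CMDid_R(M)<\infty$.} We may assume $M\not\simeq 0$. Then $\CH_R(M)=\CMDid_R(M)$ as noted, and by Step 1 we have $\CMHomid_R(M)\le\CMDid_R(M)<\infty$, so by the second part of \cref{CMidCHgen} there is a CM-quasi-deformation $R\to S\gets Q$ with $\Gid_Q(\RHom_R(S,M))<\infty$ and $\CMHomid_R(M)=\CH_S(\RHom_R(S,M))$. Thus it suffices to prove the base-change inequality: for every flat local ring homomorphism $\varphi\colon R\to S$ and every cohomologically bounded $R$-complex $M$, one has $\CH_S(\RHom_R(S,M))\ge\CH_R(M)$. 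Granting this, $\CMHomid_R(M)=\CH_S(\RHom_R(S,M))\ge\CH_R(M)=\CMDid_R(M)\ge\CMHomid_R(M)$, which forces equality throughout and proves the second assertion.

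\smallskip
\noindent\emph{The base-change inequality.} If $\CH_R(M)=-\infty$ there is nothing to prove, so choose $\pp\in\Spec R$ with $\CH_R(M)=\depth R_\pp-\width_{R_\pp}(M_\pp)$ (the supremum is attained, being a supremum of integers bounded above by $\dim R+\sup M$, and $M_\pp\not\simeq 0$). Since $\varphi$ is faithfully flat, pick $\PP\in\Spec S$ lying over $\pp$; the induced homomorphism $R_\pp\to S_\PP$ is flat and local with closed fibre $\bar S\mathrel{:=}S_\PP/\pp S_\PP$. I would then invoke the change-of-rings formulas under flat local homomorphisms (see \cite{FI03}, cf. \cite{Iy99,CFF02}), namely $\depth S_\PP=\depth R_\pp+\depth\bar S$ together with the companion formula for the width of the (localized) coinduced complex, $\width_{S_\PP}\bigl((\RHom_R(S,M))_\PP\bigr)=\width_{R_\pp}(M_\pp)+\depth\bar S$. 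Subtracting, the two fibre terms cancel and
\[
  \CH_S(\RHom_R(S,M))\ \ge\ \depth S_\PP-\width_{S_\PP}\bigl((\RHom_R(S,M))_\PP\bigr)\ =\ \depth R_\pp-\width_{R_\pp}(M_\pp)\ =\ \CH_R(M),
\]
as needed. (Note this also shows $(\RHom_R(S,M))_\PP\not\simeq 0$ for every $\PP$ over $\pp$, since $\width_{R_\pp}(M_\pp)$ and $\depth\bar S$ are finite, so the bound is never vacuous.)

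\smallskip
\noindent\emph{Main obstacle.} The entire analytic content sits in the width formula $\width_{S_\PP}((\RHom_R(S,M))_\PP)=\width_{R_\pp}(M_\pp)+\depth\bar S$. The delicate point is that $S$ need not be module-finite over $R$, so $\RHom_R(S,-)$ does not commute with localization along $R$; one must genuinely use that $\width$ of a \emph{coinduced} complex changes by the depth of the fibre under a flat local base change (the co-analogue of $\depth_S(S\otimes^{\mathbf L}_R M)=\depth_R(M)+\depth(S/\mm S)$), rather than any naive localization identity. In our application one may additionally use \cref{artinian-fibre} to reduce to CM-quasi-deformations whose closed fibre is artinian; this trims the closed-point contribution but, since the formal fibres over non-maximal primes can still be positive-dimensional, does not remove the non-closed-point analysis above.
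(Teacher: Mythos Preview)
Your Step~1 is correct and coincides with the paper's argument: the diagram $R \xrightarrow{=} R \gets R \ltimes C$ is a CM-quasi-deformation with $\Gfd_{R\ltimes C}(R)=0$, yielding $\CMHomid_R(M)\le\Gid_{R\ltimes C}(M)$ for every semidualizing $C$.

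Step~2, however, has a genuine gap. The identity
\[
\width_{S_\PP}\bigl((\RHom_R(S,M))_\PP\bigr)=\width_{R_\pp}(M_\pp)+\depth(S_\PP/\pp S_\PP)
\]
is not in \cite{FI03}, \cite{Iy99}, or \cite{CFF02}; those sources compare depth and width of a fixed $S$-complex over $R$ versus over $S$, not the width of a \emph{localized coinduced} complex. You correctly identify the obstruction yourself: since $S$ is not module-finite over $R$, $\RHom_R(S,-)$ does not commute with localization, and $(\RHom_R(S,M))_\PP$ has no tractable relation to $M_\pp$ or to $\RHom_{R_\pp}(S_\PP,M_\pp)$. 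The artinian-fibre reduction via \cref{artinian-fibre} does not resolve this, as $\CH_S$ still ranges over all primes of $S$. So the formula is asserted, not proved, and the argument is incomplete.

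The paper avoids this obstacle by exploiting the hypothesis $\CMDid_R(M)<\infty$ more substantially. Fix $C$ semidualizing with $\Gid_{R\ltimes C}(M)<\infty$. For \emph{any} flat local $R\to S$, the module $C\otimes_R S$ is semidualizing over $S$ and $R\ltimes C\to S\ltimes(C\otimes_R S)\cong(R\ltimes C)\otimes_R S$ is flat local. The adjunction $\RHom_R(S,M)\cong\RHom_{R\ltimes C}((R\ltimes C)\otimes_R S,M)$, combined with the fact from \cite[Theorem~1.7]{CS10} that Gorenstein injective dimension is preserved under derived coextension along a flat local map, gives $\Gid_{S\ltimes(C\otimes_R S)}(\RHom_R(S,M))=\Gid_{R\ltimes C}(M)<\infty$. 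Now both sides collapse to their Chouinard invariants by \cite[Lemma~4.14]{SSY20}, yielding the \emph{equality} $\CH_S(\RHom_R(S,M))=\CH_R(M)$ for every flat local $R\to S$, with no prime-by-prime width computation. Plugging in the $S$ from \cref{CMidCHgen} finishes the proof. The semidualizing module is thus not discarded after Step~1 but carried through Step~2 as the essential tool.
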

\begin{proof}
  Let $C$ be a semidualizing module such that $\CMDid_R(M) = \Gid_{R \ltimes C} M$. By \cite[Lemma 3.6]{G01}, $R \xrightarrow{=} R \gets R \ltimes C$ is a CM-quasi-deformation. By the definition, we thus have $\CMHomid_R(M) \leq \Gid_{R \ltimes C} M = \CMDid_R(M)$. 

  Now assume that $\Gid_{R \ltimes C} (M) < \infty$. By \cite[Lemma 4.14]{SSY20}, we get $\CMDid_R(M)=\Gid_{R \ltimes C} (M) = \CH_R(M)$. Let $R \to S$ be a flat local morphism. Then $C \otimes_R S$ is a semidualizing $S$-module \cite[Theorem 4.5]{FSW07} and $R \ltimes C \to (R \ltimes C) \otimes_R S \cong S \ltimes (C \otimes_R S)$ is a flat local morphism. Since 
  $$\RHom_R(S,M) \cong \RHom_{R \ltimes C}((R \ltimes C) \otimes_R S,M),$$ 
  it follows by \cite[Theorem 1.7]{CS10} that 
  $$\Gid_{R \ltimes C} M = \Gid_{(R \ltimes C) \otimes_R S} \RHom_R(S,M).$$ 
  Using \cite[Lemma 4.14]{SSY20} again, we get 
  $$\Gid_{(R \ltimes C) \otimes_R S} \RHom_R(S,M) = \Gid_{S \ltimes (C \otimes_R S)} \RHom_R(S,M) = \CH_S(\RHom_R(S,M)).$$ 
  In conclusion, $\CH_R(M) = \CH_S(\RHom_R(S,M))$ for all flat local morphisms $R \to S$. Choose a CM-quasi-deformation $R \to S \gets Q$ such that $\CMHomid_R(M) = \CH_S(\RHom_R(S,M))$ using \cref{CMidCHgen} and that $\CMHomid_R(M) \leq \CMDid_R(M) < \infty$, and then we obtain $\CMDid_R(M) = \CH_R(M) = \CMHomid_R(M)$.
\end{proof}

\begin{rmk}
  It is not clear to us whether \cref{chain} generalizes for non-local rings. The problem is that we do not know if the Holm-J{\o}rgensen dimension $\CMDid$ always satisfies the local-global principle.
\end{rmk}

\begin{quest} Does the refinement property 
  $$\CMHomid_R(M) < \infty \implies \CMHomid_R(M) = \CH_R(M)$$ 
  hold for any commutative noetherian ring $R$? In what follows, we are able to show this if $R$ is Cohen-Macaulay.
\end{quest}
\begin{lemma}\label{RidFF}
  Let $R$ be a commutative noetherian ring of finite Krull dimension. Let $R \to S$ be a faithfully flat ring homomorphism. Then for any cohomologically bounded $R$-complex $M$, we get an equality $\Rid_S(\RHom_R(S,M)) = \Rid_R(M)$ and an inequality $\rid_S(\RHom_R(S,M)) \geq \rid_R(M)$. 
  
  If $R$ is Cohen-Macaulay then we have $\rid_S(\RHom_R(S,M)) = \rid_R(M)$.

  If both $R$ and $S$ are Cohen-Macaulay then we have $\CH_S(\RHom_R(S,M)) = \CH_R(M)$.
\end{lemma}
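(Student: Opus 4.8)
The plan is to deduce the four assertions in the order: the $\Rid$-equality first, then the $\rid$-equality over a Cohen--Macaulay ring, then the Chouinard equality, and finally the general $\rid$-inequality (which is the odd one out). Write $\phi\colon R\to S$ for the structure map and $C=S/R$ for the cokernel $R$-module. Since $\dim(R)<\infty$ we have $\pd_R S<\infty$ by \cref{ss:RG}, so $\RHom_R(S,M)$ is cohomologically bounded; every complex of finite projective dimension over $S$ then has finite projective dimension over $R$ by change of rings, and $S\otimes_R-$ sends modules of finite projective dimension over $R$ to modules of finite projective dimension over $S$, preserving finite generation, by flat base change. Moreover $R\to S$ is a pure monomorphism, so $\Tor^R_1(C,-)=0$, i.e.\ $C$ is flat over $R$; hence $C\in\Pcal$ and $C\otimes_R-$ preserves $\Fcal_{\dim(R)}=\Pcal$. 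Finally, the adjunctions $(S\otimes_R^{\mathbf L}-)\dashv\phi_*\dashv\RHom_R(S,-)$ give, for $N\in\D(S)$ and $P\in\D(R)$, natural isomorphisms $\RHom_S(N,\RHom_R(S,M))\simeq\RHom_R(N,M)$ and $\RHom_R\bigl(P,\phi_*\RHom_R(S,M)\bigr)\simeq\RHom_S(S\otimes_R P,\RHom_R(S,M))$.

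For the $\Rid$-equality, the inequality $\Rid_S(\RHom_R(S,M))\le\Rid_R(M)$ is immediate from the first adjunction, since any complex of finite projective dimension over $S$ has finite projective dimension over $R$. For the reverse I would first observe $\Rid_R\bigl(\phi_*\RHom_R(S,M)\bigr)\le\Rid_S(\RHom_R(S,M))$ via the second adjunction and $S\otimes_R P$ having finite projective dimension over $S$ for $P\in\Pcal$, and then prove $\Rid_R(M)\le\Rid_R\bigl(\phi_*\RHom_R(S,M)\bigr)$. Applying $\RHom_R(-,M)$ to $0\to R\to S\to C\to 0$ yields a triangle $\RHom_R(C,M)\to\RHom_R(S,M)\to M\xrightarrow{+}$ in $\D(R)$. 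If $k:=\Rid_R(\phi_*\RHom_R(S,M))$ but $\Rid_R(M)\ge k+1$, choose $P_0\in\Pcal$ with $\Ext^{k+1}_R(P_0,M)\ne 0$; applying $\RHom_R(P_0,-)$ to the triangle, the long exact sequence together with the vanishing of $\Ext^j_R(P_0,\phi_*\RHom_R(S,M))$ for $j\ge k+1$ forces $\Ext^{k+1}_R(P_0,M)\cong\Ext^{k+2}_R(C\otimes_R P_0,M)$, and iterating with $P_{m+1}:=C\otimes_R P_m\in\Pcal$ produces $\Ext^{k+1+m}_R(P_m,M)\ne 0$ for all $m\ge 0$, contradicting $\Rid_R(M)\le\Findim(R)=\dim(R)<\infty$. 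Hence equality holds.

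If $R$ is Cohen--Macaulay, the $\rid$-equality follows: "$\le$" holds since $\rid_R\le\Rid_R$, by the $\Rid$-equality just proved, and by \cref{Rid}; and for "$\ge$" the same triangle argument applies directly, because \cref{Rid} gives $i:=\rid_R(M)=\Rid_R(M)$, so $\Ext^{i+1}_R(C\otimes_R P,M)=0$ for every $P\in\Pcal$, and thus for $P\in\Pcal^f$ the long exact sequence shows $\Ext^i_R\bigl(P,\phi_*\RHom_R(S,M)\bigr)$ surjects onto $\Ext^i_R(P,M)\ne 0$, while $S\otimes_R P$ is finitely generated of finite projective dimension over $S$. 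From this the Chouinard equality is formal: when $R$ and $S$ are both Cohen--Macaulay of finite Krull dimension, \cref{CM-rid} applied over $S$ (to the cohomologically bounded complex $\RHom_R(S,M)$) and over $R$ gives $\CH_S(\RHom_R(S,M))=\rid_S(\RHom_R(S,M))$ and $\CH_R(M)=\rid_R(M)$, and chaining with $\rid_S(\RHom_R(S,M))=\rid_R(M)$ yields $\CH_S(\RHom_R(S,M))=\CH_R(M)$.

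It remains to prove the general $\rid$-inequality $\rid_S(\RHom_R(S,M))\ge\rid_R(M)$, and this is the genuinely technical point, since the triangle argument above breaks down here: $C\otimes_R P$ need not be finitely generated. Here I would combine the reduction $\rid_R\bigl(\phi_*\RHom_R(S,M)\bigr)\le\rid_S(\RHom_R(S,M))$ (second adjunction, $S\otimes_R P$ finitely generated of finite projective dimension over $S$) with the width formula \cref{E:rid}, so that it suffices to compare widths under the faithfully flat map $R\to S$: for $\pp\in\Spec R$ one picks $\PP\in\Spec S$ lying over $\pp$ and minimal over $\pp S$, so that $R_\pp\to S_\PP$ is flat local with artinian closed fibre, observes $\grade_S(\PP)\ge\grade_R(\pp)$ since regular sequences lift along flat maps, and then must establish $\width_S\bigl(\PP,\RHom_R(S,M)\bigr)\le\width_R(\pp,M)$. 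I expect this width comparison under flat local base change with artinian closed fibre — handled by passing to the local (Koszul/completed) picture, where the coinduced complex and the new variables in the fibre are controlled — to be the main obstacle; the remaining bookkeeping is routine.
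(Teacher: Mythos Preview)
Your arguments for the $\Rid$-equality and for the two Cohen--Macaulay clauses are correct. For $\Rid_S(\RHom_R(S,M))\ge\Rid_R(M)$ the paper is more direct than your iteration: pick $N\in\Pcal$ with $\Ext^n_R(N,M)\ne 0$ at $n=\Rid_R(M)$ and note that in the exact piece
\[
\Ext^n_R(N\otimes_R S,M)\to\Ext^n_R(N,M)\to\Ext^{n+1}_R(N\otimes_R C,M)
\]
the right-hand term already vanishes, since $N\otimes_R C\in\Pcal$ and $n+1>\Rid_R(M)$; hence $\Ext^n_S(N\otimes_R S,\RHom_R(S,M))\cong\Ext^n_R(N\otimes_R S,M)\ne 0$ in one step, and no unbounded ladder is needed. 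Your handling of the Cohen--Macaulay $\rid$-equality (via \cref{Rid}) and of the Chouinard equality (via \cref{CM-rid}) matches the paper's.

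Where your write-up is incomplete is the general $\rid$-inequality, and here your diagnosis is in fact sharper than the paper's own proof. The paper simply writes ``the argument of the previous paragraph applied for $N\in\Pcal^f$'', but running that argument with $n=\rid_R(M)$ the required vanishing $\Ext^{n+1}_R(N\otimes_R C,M)=0$ would need $n+1>\Rid_R(M)$, which is exactly what fails when $\rid_R(M)<\Rid_R(M)$; so the obstacle you flag is real. However, your proposed width-formula route stops at the key step $\width_S(\PP,\RHom_R(S,M))\le\width_R(\pp,M)$ and neither proves it nor explains why the mixed expression $S/\PP\otimes_S^{\mathbf{L}}\RHom_R(S,M)$ should be controlled as hoped, so this clause remains unproved in your version as well. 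Note, for perspective, that the paper only ever applies \cref{RidFF} under a Cohen--Macaulay hypothesis (in \cref{CHCMid}), where your argument already suffices.
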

\begin{proof}
  By the projective descent of Raynaud and Gruson \cite[Second partie]{RG71}, we have $\pd_R N < \infty$ if and only if $\pd_S N \otimes_R S < \infty$ for any $R$-module $N$. Since $\pd_R S < \infty$, we also have $\pd_S(L) < \infty$ if and only if $\pd_R(L) < \infty$ for any $S$-module $L$. The latter property together with the adjunction formula $\RHom_S(L,\RHom_R(S,M)) \cong \RHom_R(L,M))$ yields $\Rid_S(\RHom_R(S,M)) \leq \Rid_R(M)$.

  For the other inequality, let $N \in \Pcal$ be an $R$-module such that we have $n = \sup \RHom_R(N,M) = \Rid_R(M)$. We first claim that $\sup \RHom_R(N \otimes_R S,M) = \Rid_R(M)$. Recall e.g. from 
  \cite[Lemma 35.4.8]{Stacks} that the exact sequence 
  \begin{equation}\label{E:RtoS} 0 \to R \to S \to S/R \to 0 \end{equation} 
  is pure, and then all of its components are flat $R$-modules. Applying $\Hom_{\D(R)}(N \otimes_R^\mathbf{L} -,M)$ to \cref{E:RtoS}, we obtain an exact sequence $\Ext_R^n(N \otimes_R S,M) \to \Ext_R^n(N,M) \to \Ext_R^{n+1}(N\otimes_R S/R,M)$. Since $S/R$ is flat, $N\otimes_R S/R \in \Pcal$ (see \cref{ss:RG}), and so $\Ext_R^{n+1}(N\otimes_R S/R,M) = 0$ by the assumption. Therefore $\Ext_R^n(N \otimes_R S,M) \neq 0$ and the claim follows. Next, by adjunction we have $\RHom_R(N \otimes_R S,M) \cong \RHom_S(N \otimes_R S,\RHom_R(S,M))$, and thus it follows that $\Rid_S(\RHom_R(S,M)) \geq \Rid_R(M)$.

  The argument of the previous paragraph applied for $N \in \Pcal^f$ shows also the inequality $\rid_S(\RHom_R(S,M)) \geq \rid_R(M)$. If $R$ is Cohen-Macaulay, we have using \cref{Rid} and the above that $\rid_R(M) = \Rid_R(M) = \Rid_S(\RHom_R(S,M)) \geq \rid_S(\RHom_R(S,M))$. The final claim follows from the previous one and \cref{CM-rid}.
\end{proof}

\begin{prop}\label{CHCMid}
  Let $R$ be a Cohen-Macaulay ring. For any cohomologically bounded $R$-complex we have $\CH_R(M) = \CMHomid_R(M) = \rid_R(M) = \Rid_R(M)$.
\end{prop}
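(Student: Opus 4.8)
The plan is to reduce to the case of a local ring and then split the statement into three equalities, two of which are already established earlier. Since a local noetherian ring has finite Krull dimension, and since all four invariants in question are computed from their values at localisations, it suffices to prove the proposition for a local Cohen--Macaulay ring $(R,\mm)$ and a cohomologically bounded $R$-complex $M$. Over such an $R$ the equality $\CH_R(M) = \rid_R(M)$ is precisely the Cohen--Macaulay consequence recorded in \cref{CM-rid}, and $\rid_R(M) = \Rid_R(M)$ is the content of \cref{Rid}. Hence the real point is to prove
$$\CMHomid_R(M) = \CH_R(M).$$

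For this, first note that $\CMHomid_R(M) < \infty$ by \cref{CMinj}. By \cref{CMidCHgen} there is then a CM-quasi-deformation $R \to S \gets Q$ with $\Gid_Q(\RHom_R(S,M)) < \infty$ realising the value, so that $\CMHomid_R(M) = \CH_S(\RHom_R(S,M))$. Running the localisation step from the proof of \cref{artinian-fibre} --- which replaces $S$ and $Q$ by appropriate localisations and thereby does not increase $\Gid_Q(\RHom_R(S,M))$ nor the difference $\Gid_Q(\RHom_R(S,M)) - \Gfd_Q(S)$, so that both the eligibility condition and the realised value are preserved --- we may additionally assume that the closed fibre of $R \to S$ is artinian. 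For this CM-quasi-deformation $R \to S$ is a faithfully flat local homomorphism, $S$ is Cohen--Macaulay (hence local noetherian of finite Krull dimension) by \cref{R:closed-fibre}, and $\RHom_R(S,M)$ is cohomologically bounded because $S$ is a flat $R$-module and so $\pd_R S \leq \dim R < \infty$ (\cref{ss:RG}). Therefore \cref{RidFF} applies and gives $\CH_S(\RHom_R(S,M)) = \CH_R(M)$; combining with the line above yields $\CMHomid_R(M) = \CH_R(M)$.

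The main difficulty is not any single ingredient but the bookkeeping: one must secure a CM-quasi-deformation that \emph{simultaneously} realises $\CMHomid_R(M)$, satisfies $\Gid_Q(\RHom_R(S,M)) < \infty$ (to invoke \cref{CMidCHgen}), and has artinian closed fibre (to invoke \cref{R:closed-fibre} and then the faithfully flat case of \cref{RidFF}), and the compatibility of these three requirements rests entirely on the fact that the localisation used in the proof of \cref{artinian-fibre} only decreases $\Gid$ and the relevant difference. Finally, for an arbitrary (not necessarily local) commutative noetherian $R$ the statement follows by passing to the localisations $R_\mm$: one has $\CMHomid_R(M) = \sup_\mm \CMHomid_{R_\mm}(M_\mm)$ by \cref{d:CMI}, and $\CH_R(M)$, $\rid_R(M)$, $\Rid_R(M)$ are likewise the suprema of their counterparts over the $R_\mm$ (immediately from the definitions, using that $\Ext$ commutes with localisation for the latter two), so the local identity propagates to $R$.
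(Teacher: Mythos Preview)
Your argument is essentially the same as the paper's: reduce $\CH_R(M)=\CMHomid_R(M)$ to the local case, use \cref{CMinj} for finiteness, combine \cref{artinian-fibre} and \cref{CMidCHgen} to pick a CM-quasi-deformation with artinian closed fibre realising $\CMHomid_R(M)=\CH_S(\RHom_R(S,M))$, and then apply \cref{R:closed-fibre} and \cref{RidFF}; the remaining equalities come from \cref{CM-rid} and \cref{Rid}. One small caveat: your justification that $\Rid_R(M)=\sup_\mm \Rid_{R_\mm}(M_\mm)$ ``using that $\Ext$ commutes with localisation'' is not quite right, since $\Pcal$ contains modules that are not finitely generated and $\Ext$ need not commute with localisation for those---the paper sidesteps this by invoking \cref{CM-rid} and \cref{Rid} directly at the global level (for $R$ of finite Krull dimension) rather than reducing these two equalities to the local case.
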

\begin{proof}
  The claim $\CH_R(M) = \CMHomid_R(M)$ clearly reduces to $R$ local. We have $\CMHomid_R(M) < \infty$ by \cref{CMinj}. In view of \cref{artinian-fibre} and \cref{CMidCHgen}, there is a CM-quasi-deformation $R \to S \gets Q$ with $R \to S$ having artinian closed fibre and such that $\CMHomid_R(M) = \CH_{S}(\RHom_R(S,M))$. Since both $R$ and $S$ are Cohen-Macaulay by \cref{R:closed-fibre}, we further have $\CH_{S}(\RHom_R(S,M)) = \CH_R(M)$ by \cref{RidFF}. Finally, $\CH_R(M) = \rid_R(M) = \Rid_R(M)$ by \cref{CM-rid,Rid}.
\end{proof}

For convenience, let us denote $\Ccal\Mcal\Ical_0 = \{M \in \Mod R \mid \CMHomid_R(M) \leq 0\}$. 

\begin{cor}\label{TCMid}
If $R$ is Cohen-Macaulay of finite Krull dimension then $\Tcal_{\height} = \Ccal\Mcal\Ical_0$. In particular, there is a cotorsion pair $(\Pcal,\Ccal\Mcal\Ical_0)$ and the class $\Ccal\Mcal\Ical_0$ is definable and enveloping.
\end{cor}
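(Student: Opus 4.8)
The plan is to assemble this corollary from results already in hand; no new argument is really needed. First I would observe that every $R$-module $M$ is in particular a cohomologically bounded $R$-complex (concentrated in degree zero), so \cref{CHCMid} applies and gives $\CMHomid_R(M) = \rid_R(M)$. Consequently
$$\Ccal\Mcal\Ical_0 = \{M \in \Mod R \mid \CMHomid_R(M) \leq 0\} = \{M \in \Mod R \mid \rid_R(M) \leq 0\}.$$
On the other hand, since $R$ is Cohen-Macaulay we have $\grade_R = \height_R$ (see \cref{ss:depth}), whence the minimal tilting class $\Tcal_{\mathsf{min}}$ coincides with $\Tcal_{\grade} = \Tcal_{\height}$, and \cref{E:Tgrade} identifies it with $\{M \in \Mod R \mid \rid_R(M) \leq 0\}$. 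Comparing the two descriptions yields the desired equality $\Tcal_{\height} = \Ccal\Mcal\Ical_0$.

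For the ``in particular'' part I would recall that $T_{\height}$ is a tilting module and hence induces a complete hereditary cotorsion pair $(\Acal_{\height},\Tcal_{\height})$, and that its left-hand constituent is $\Acal_{\height} = \Pcal$. The latter is exactly what was established in the proof of the implication $(i)\Rightarrow(ii)$ of \cref{T:finite-type}: one applies \cref{lemma-cp} to the cotorsion pair $(\Acal_{\height},\Tcal_{\height})$ with $\Zcal = \Pcal$, using $\Acal_{\height}\subseteq\Pcal$, the closure of $\Pcal$ under extensions, and the equality $\Acal_{\height}\cap\Tcal_{\height} = \Pcal\cap\Tcal_{\height}$ from \cref{AddT}. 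Substituting $\Tcal_{\height} = \Ccal\Mcal\Ical_0$ then shows that $(\Pcal,\Ccal\Mcal\Ical_0)$ is a (complete, hereditary) cotorsion pair. Finally, $\Tcal_{\height}$ is definable and enveloping by \cref{T-prod-compl} (definability of a tilting class also being \cite[Corollary 13.42]{GT12}), so the same holds for $\Ccal\Mcal\Ical_0$.

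I do not expect a genuine obstacle: the substantive work has been done in \cref{CHCMid} and \cref{T:finite-type}. The only points deserving care are bookkeeping ones — \cref{CHCMid} is phrased for cohomologically bounded complexes, so one should explicitly restrict attention to modules, and one should invoke the already-proved identification $\Acal_{\height} = \Pcal$ rather than re-deriving it here.
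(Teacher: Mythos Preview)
Your proof is correct and follows essentially the same route as the paper's: the paper combines \cref{CHCMid} with \cref{T-descr} (going through $\CH_R$) while you combine \cref{CHCMid} with \cref{E:Tgrade} (going through $\rid_R$), but these are the same chain of equalities read from a different end. For the ``in particular'' part you cite exactly the same ingredients (\cref{T:finite-type} and \cref{T-prod-compl}), merely unpacking in a bit more detail how $\Acal_{\height}=\Pcal$ is obtained.
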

\begin{proof}
  Combine \cref{CHCMid} and \cref{T-descr}. The second claim follows from \cref{T-prod-compl,T:finite-type}.
\end{proof}

In the following proposition, we gather some further good properties of $\CMHomid$ over a Cohen-Macaulay ring analogous to those enjoyed by $\Gid$ over a Gorenstein ring.

\begin{prop}\label{CMid-props}
  Let $R$ be a Cohen-Macaulay ring. Then: 
  \begin{enumerate}
    \item[(i)] $\CMHomid_R(M) \leq \dim(R)$ for any $R$-module $M$,
    \item[(ii)] We have $\CMHomid_{R}(M_\pp) = \CMHomid_{R_\pp}(M_\pp) \leq \CMHomid_R(M)$ for any cohomologically bounded complex $M$ and any $\pp \in \Spec R$.
    \item[(iii)] Let $R \to S$ be a flat local morphism with a Cohen-Macaulay closed fibre. Then $\CMHomid_R(M) = \CMHomid_S(\RHom_R(S,M))$ for any cohomologically bounded complex $M$.
    \item[(iv)] Let $R$ be a local Cohen-Macaulay ring. Then we have $\CMHomid_R(M) = \CMDid_{\widehat{R}}(\RHom_{R}(\widehat{R},M))$ for any cohomologically bounded $R$-complex $M$. 
    \item[(v)] If $R$ is local and $M \neq 0$ is finitely generated $R$-module then $\CMHomid_R(M) = \depth(R)$. 
    \item[(vi)] If $R$ is local, we have $\CMHomid_R(M) = \CMDid_R(M)$ for any $R$-module $M$ or any cohomologically bounded $R$-complex if and only if $R$ admits a dualizing module.
  \end{enumerate}
  
\end{prop}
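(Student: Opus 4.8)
The whole proposition rests on the identity $\CMHomid_R(M) = \CH_R(M)$ of \cref{CHCMid}. Since $\CH_R$ is defined as a supremum over $\Spec R$ of the local quantities $\depth R_\pp - \width_{R_\pp}(M_\pp)$, and since $\CMHomid_{R_\mm}(M_\mm) = \CH_{R_\mm}(M_\mm)$ for every maximal ideal $\mm$, this identity holds for every Cohen-Macaulay ring, not merely the local ones. The plan is to prove each item by translating it into a statement about the Chouinard invariant, using freely that a noetherian local ring has finite Krull dimension, so that the finite-dimension hypothesis of \cref{RidFF} is automatic after localizing. Items (i) and (v) are then immediate: for (i), $\CMHomid_R(M) = \CH_R(M) \le \dim R$ by the basic properties of $\CH$ recalled in \cref{ss:Chouinard}; for (v), evaluating the Chouinard supremum at $\pp = \mm$ and using $\width_R(M) = 0$ (which holds because $M/\mm M \neq 0$ by Nakayama) gives $\CH_R(M) \ge \depth R$, while $\CH_R(M) \le \dim R = \depth R$ since $R$ is Cohen-Macaulay.

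For (ii), both sides are Chouinard invariants by \cref{CHCMid} ($R_\pp$ is again Cohen-Macaulay and $M_\pp$ is cohomologically bounded), so it suffices to show $\CH_R(M_\pp) = \CH_{R_\pp}(M_\pp) \le \CH_R(M)$. The key computation is that flatness of $R_\pp$ over $R$ together with $k(\qq) \otimes_R R_\pp = 0$ for $\qq \not\subseteq \pp$ forces $\width_{R_\qq}((M_\pp)_\qq)$ to equal $\width_{R_\qq}(M_\qq)$ when $\qq \subseteq \pp$ and to be $+\infty$ otherwise; hence both Chouinard invariants of $M_\pp$ reduce to $\sup_{\qq \subseteq \pp}(\depth R_\qq - \width_{R_\qq}M_\qq)$, which is $\le \CH_R(M)$. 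For (iii), I would first note that $S$ is Cohen-Macaulay by \cref{R:closed-fibre}, that a flat local homomorphism is faithfully flat, and that $\RHom_R(S,M)$ is cohomologically bounded because $\pd_R S < \infty$; then \cref{CHCMid} on both sides together with the faithfully flat invariance of $\CH$ from \cref{RidFF} gives $\CMHomid_R(M) = \CH_R(M) = \CH_S(\RHom_R(S,M)) = \CMHomid_S(\RHom_R(S,M))$.

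Item (iv) then follows by applying (iii) to the completion map $R \to \widehat R$, which is flat local with closed fibre the field $k$, hence Cohen-Macaulay, and then replacing $\CMHomid_{\widehat R}$ by $\CMDid_{\widehat R}$ via item (vi) applied to the complete local Cohen-Macaulay ring $\widehat R$, which admits a dualizing module by the Cohen structure theorem. For (vi): if $R$ admits a dualizing module, then $\CMDid_R$ is finite on all modules by \cite[Theorem 5.1]{HJ07} --- and on all cohomologically bounded complexes too, since $R$ then has a dualizing complex --- so \cref{chain} upgrades the inequality $\CMHomid_R \le \CMDid_R$ to an equality; conversely, if the two agree on all modules, then item (i) shows $\CMDid_R$ is finite everywhere, so \cite[Theorem 5.1]{HJ07} again produces a dualizing module.

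No single step is deep; I expect the main obstacle to be bookkeeping --- keeping straight which of the earlier results need ``local'', ``Cohen-Macaulay'', or ``finite Krull dimension'' (the last being automatic after localizing) --- together with one careful point in (vi), namely checking that the Holm--J{\o}rgensen finiteness criterion, stated there for modules, still yields finiteness of $\CMDid$ on cohomologically bounded complexes via the dualizing complex of $R$. The one genuinely computational ingredient is the localization behaviour of the Chouinard invariant used in (ii).
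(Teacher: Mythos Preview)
Your proposal is correct and largely parallels the paper's proof: both rely on the identification $\CMHomid_R = \CH_R$ from \cref{CHCMid}, and your treatments of (i), (iii), (iv), and (vi) match the paper's essentially verbatim. The genuine difference is in (ii) and (v). For (ii), the paper argues via tilting theory: it observes that $\Tcal_{\height} \cap \Mod{R_\pp}$ is the minimal tilting class over $R_\pp$, uses definability of $\Tcal_{\height}$ to get the inequality at level zero, and then appeals to the coresolution description of $\rid$ to lift this to all degrees. Your direct computation with the Chouinard invariant --- reducing both $\CH_R(M_\pp)$ and $\CH_{R_\pp}(M_\pp)$ to the supremum over $\qq \subseteq \pp$ --- is more elementary and self-contained, bypassing the tilting machinery entirely. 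For (v), the paper simply cites \cite[Corollary 5.5]{CFF02} for $\rid_R(M) = \depth(R)$, whereas you compute it directly via Nakayama; again your route is more transparent. Your flagged concern about (vi) --- whether \cite[Theorem 5.1]{HJ07} covers cohomologically bounded complexes --- is a fair bookkeeping point that the paper's proof glosses over; your suggestion to invoke the dualizing complex of $R$ is the natural fix.
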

\begin{proof}
  $(i)$: By \cref{CHCMid}, we have $\CMHomid_R(M) = \CH_R(M) \leq \dim(R)$.

  $(ii)$: Observe that $\Tcal_{\height} \cap \Mod{R_\pp}$ is the minimal tilting class in $\Mod{R_\pp}$, and so $\Tcal_{\height} \cap \Mod{R_\pp} = \{M \in \Mod{R_\pp} \mid \CMHomid_{R_\pp}(M)\leq 0\}$. It follows directly from the fact that $\Tcal_{\height}$ is a definable subcategory that $\CMHomid_R(M) \leq 0 \implies \CMHomid_{R_\pp}(M_\pp) \leq 0 \iff \CMHomid_{R}(M_\pp) \leq 0$. Since $\CMHomid = \rid$ for both $R$ and $R_\pp$ by \cref{CHCMid}, it can be computed by taking $\Tcal_{\height}$-coresolutions (see \cref{ss:findim}), and the claim follows.

  $(iii):$ By \cref{CHCMid}, \cref{RidFF}, and \cref{R:closed-fibre}, we have $\CMHomid_R(M) = \CH_R(M) = \CH_S(\RHom_R(S,M)) = \CMHomid_S(\RHom_R(S,M))$.

  $(iv):$ By (iii), we have $\CMHomid_R(M) = \CMHomid_{\widehat{R}}(\RHom_{R}(\widehat{R},M))$. The equality of $\CMDid_{\widehat{R}}(\RHom_{R}(\widehat{R},M))$ with $\CMHomid_{\widehat{R}}(\RHom_{R}(\widehat{R},M))$ follows from (vi), because $\widehat{R}$ admits a dualizing module.

  $(v):$ By \cref{CHCMid}, $\CMHomid_R(M) = \rid_R(M)$, and then $\rid_R(M) = \depth(R)$ by \cite[Corollary 5.5]{CFF02}.

  $(vi)$: Recall from \cref{CMinj} that $\CMHomid_R(M)<\infty$ for any $M$. By \cite[Theorem 5.1]{HJ07}, $\CMDid_{R}(M)<\infty$ for every $M$ if and only if $R$ admits a dualizing module. Finally by \cref{chain}, $\CMHomid_R(M) = \CMDid_R(M)$ if and only if $\CMDid_R(M)<\infty$.
 \end{proof}

\begin{rmk} \label{r:fibres}
  \cref{CMid-props}(vi) gives a formula for computing $\CMHomid_R(M)$ over a local Cohen-Macaulay ring. Indeed, combined with \cite[Lemma 4.14]{SSY20}, we see that $\CMHomid_R(M) = \CMDid_{\widehat{R}}(\RHom_R(\widehat{R},M)) = \Gid_{\widehat{R} \ltimes \omega_{\widehat{R}}}(\RHom_R(\widehat{R},M))$.

  An analogous formula fails to hold for the notion $\CMSid$ of Cohen-Macaulay injective dimension from \cite{SSY20}. Let $R$ be a local Cohen-Macaulay ring such that there is a formal fibre $\widehat{R} \otimes_R k(\pp)$ for some $\pp \in \Spec R$ with Krull dimension $\dim(\widehat{R} \otimes_R k(\pp))>0$; such examples are abundant, see e.g. \cite{R91}. We consider $\Spec{\widehat{R} \otimes_R k(\pp)}$ naturally embedded into $\Spec{\widehat{R}}$ and for a $\PP \in \Spec{\widehat{R} \otimes_R k(\pp)}$, the residue field $k(\PP) = \widehat{R}_{\PP}/\PP\widehat{R}_{\PP}$ is a $k(\pp)$-module, so that as an $R$-module we have $k(\PP)$ is isomorphic to a coproduct of copies of $k(\pp)$. It follows that $\width_{\widehat{R}_{\PP}}(T(\pp) \otimes_R \widehat{R}) = -\sup(k(\PP) \otimes_{\widehat{R}}^\mathbf{L} T(\pp) \otimes_R \widehat{R}) = -\sup(k(\PP) \otimes_R^\mathbf{L} T(\pp)) = -\sup(k(\pp) \otimes_R^\mathbf{L} T(\pp))  = \width_{R_\pp}(T(\pp))$. Choosing $\PP$ as any non-minimal element of $\Spec{\widehat{R} \otimes_R k(\pp)}$, we get $\CH_{\widehat{R}}(T(\pp)) \otimes_R \widehat{R} \geq \depth(\widehat{R}_{\PP}) - \width_{\widehat{R}_{\PP}}(T(\pp) \otimes_R \widehat{R}) = \depth(\widehat{R}_{\PP}) - \width_{R_\pp}(T(\pp)) \geq \height(\PP) - \height(\pp) > 0$. 
  
  Since $\widehat{R}$ is Cohen-Macaulay with a dualizing module, we have $\CH_{\widehat{R}}(T(\pp) \otimes_R \widehat{R}) = \CMSid_{\widehat{R}}(T(\pp) \otimes_R \widehat{R}) = \CMHomid_{\widehat{R}}(T(\pp) \otimes_R \widehat{R}) > 0$, \cite[4.9, 4.13, 4.14]{SSY20}. On the other hand $\CMHomid_R(T(\pp)) = 0$, since $T(\pp) \in \Tcal_{\height}$. If $R$ itself admits a dualizing module then $\CMSid_R(T(\pp)) = \CMDid_R(T(\pp)) = \CMHomid_R(T(\pp)) = 0$ by \cref{CMid-props} and \cite[Corollary 4.15]{SSY20}.
\end{rmk}
\subsection{} We conclude the section by a generalization of the fact \cite[Theorem 4.1]{E11} that Gorenstein injective modules over Gorenstein rings are closed under taking tensor products.
\begin{cor}
  Let $R$ be a Cohen-Macaulay ring. For any $M, N \in \Ccal\Mcal\Ical_0$ we have $M \otimes_R N \in \Ccal\Mcal\Ical_0$.
\end{cor}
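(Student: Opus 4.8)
The plan is to prove the sharper statement that, for $M,N\in\Ccal\Mcal\Ical_0$, there is an isomorphism
$$M\otimes_R N\;\cong\;\bigoplus_{\pp}M(\pp)\otimes_{R_\pp}N(\pp),$$
where $\pp$ runs over the minimal primes of $R$ and $M(\pp),N(\pp)$ are the $\pp$-local $\pp$-torsion modules of \cref{Mp}. Granting this, we are done: each summand is an $R_\pp$-module supported only at the minimal prime $\pp$, so the only nontrivial defining condition of $\Ccal\Mcal\Ical_0=\Tcal_{\height}$, namely $\width_{R_\pp}(-)\ge\height(\pp)$, holds vacuously since $\height(\pp)=0$; and tilting classes are closed under coproducts. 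Since $\CMHomid$ is computed on the stalks $R_\mm$ at maximal ideals, and a Noetherian local ring has finite Krull dimension, I may assume throughout that $R$ has finite Krull dimension $d$, whence $\Ccal\Mcal\Ical_0=\Tcal_{\height}$ by \cref{TCMid}.

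The key idea is to replace $\otimes_R$ by the \emph{derived} tensor product, which is compatible with triangles and filtrations, and to recover $M\otimes_R N=H^0(M\otimes_R^{\mathbf L}N)$ at the end. Feeding the canonical filtration of $N$ from \cref{filtration} into $M(\pp)\otimes_R^{\mathbf L}(-)$ and using that $M(\pp)\otimes_R^{\mathbf L}N(\qq)=0$ whenever $\pp\neq\qq$ (disjoint cohomological supports, hence the derived tensor is acyclic), one gets $M(\pp)\otimes_R^{\mathbf L}N\cong M(\pp)\otimes_{R_\pp}^{\mathbf L}N(\pp)$. Then feeding the canonical filtration of $M$ into $(-)\otimes_R^{\mathbf L}N$ exhibits $M\otimes_R^{\mathbf L}N$ as assembled by finitely many triangles from the objects $\widetilde M_k\otimes_R^{\mathbf L}N\cong\bigoplus_{\height(\pp)=k}M(\pp)\otimes_{R_\pp}^{\mathbf L}N(\pp)$, $k=0,\dots,d$, following the height stratification.

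The crucial point is that $M(\pp)\otimes_{R_\pp}^{\mathbf L}N(\pp)$ lies in cohomological degrees $\le-\height(\pp)$, i.e. $\Tor_i^{R_\pp}(M(\pp),N(\pp))=0$ for $i<\height(\pp)$. To see this: $M(\pp)=\Gamma_{\pp R_\pp}(M_\pp)$ is $\pp R_\pp$-torsion, hence filtered by copies of $k(\pp)$ exactly as in the proof of \cref{unique-tor}; and since $N(\pp)\in\Tcal_{\height}$ is $\pp$-local, the description $\Tcal_{\height}=\{X\mid\width_R(\qq,X)\ge\height(\qq)\ \forall\qq\}$ gives $\Tor_i^{R_\pp}(k(\pp),N(\pp))=0$ for $i<\height(\pp)$; a transfinite dévissage along the $k(\pp)$-filtration of $M(\pp)$ (using exactness of direct limits) then propagates this vanishing to $\Tor_i^{R_\pp}(M(\pp),N(\pp))$. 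Consequently, for every $k\ge1$ the object $\widetilde M_k\otimes_R^{\mathbf L}N$ sits in cohomological degrees $\le-k\le-1$, so the submodule $M_1$ of \cref{filtration} (with filtration quotients $\widetilde M_1,\dots,\widetilde M_d$) satisfies $H^0(M_1\otimes_R^{\mathbf L}N)=H^1(M_1\otimes_R^{\mathbf L}N)=0$. Taking $H^0$ in the triangle $M_1\otimes_R^{\mathbf L}N\to M\otimes_R^{\mathbf L}N\to\widetilde M_0\otimes_R^{\mathbf L}N\xrightarrow{+1}$ yields $M\otimes_R N\cong H^0(\widetilde M_0\otimes_R^{\mathbf L}N)=\bigoplus_{\height(\pp)=0}M(\pp)\otimes_{R_\pp}N(\pp)$, which is the asserted formula. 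The genuine obstacle is precisely this $\otimes_R$ versus $\otimes_R^{\mathbf L}$ tension: one cannot run the canonical filtrations through $\otimes_R$ directly because it is not exact, and the only thing making the argument close is the above $\Tor$-vanishing, which is where the deconstruction of $\Tcal_{\height}$ and the torsion nature of its ``building blocks'' $M(\pp)$ are used.
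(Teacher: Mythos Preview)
Your proof is correct and arrives at the same endpoint as the paper: $M\otimes_R N$ decomposes as a coproduct of $R_\pp$-modules over the minimal primes $\pp$, each of which automatically lies in $\Tcal_{\height}$. The route, however, differs.

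The paper argues more directly and avoids derived tensor products altogether. After reducing to the local case, it sets $W=\{\pp\mid\height(\pp)>0\}$ and observes that $\Gamma_W(N)\otimes_R M=0$: since $M\in\Tcal_{\height}$ gives $R/\pp\otimes_R M=0$ for every $\pp\in W$, and $\Gamma_W(N)$ is built from such quotients, right-exactness of $\otimes_R$ alone kills the whole tensor. Thus $M\otimes_R N\cong M\otimes_R(N/\Gamma_W N)$, and the canonical filtration of $N$ identifies $N/\Gamma_W N$ with $\bigoplus_{\qq\text{ minimal}}N_\qq$; the conclusion follows immediately. Only \emph{one} canonical filtration (that of $N$) and only the \emph{underived} vanishing $R/\pp\otimes_R M=0$ are used.

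Your argument, by contrast, runs both canonical filtrations through $\otimes_R^{\mathbf L}$ and invokes the finer Tor-vanishing $\Tor_i^{R_\pp}(M(\pp),N(\pp))=0$ for $i<\height(\pp)$ to control the cohomological amplitude of each piece. This is more elaborate, but it has the advantage of exhibiting the full derived structure: you actually show that for $k\geq 1$ the contribution of $\widetilde M_k$ sits in degrees $\leq -k$, which is a stronger statement than what the corollary asks for. The paper's proof is shorter and uses less machinery; yours gives more information at the cost of that machinery. Both are valid.
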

\begin{proof}
  We can assume that $R$ is local. Let $W = \{\pp \in \Spec R\mid \height(\pp)>0\}$, and consider the exact sequence $0 \to \Gamma_W(N) \to N \to N' \to 0$, where $\Gamma_W$ is the $W$-torsion functor. Since $M \in \Tcal_{\height}$, we have $R/I \otimes_R M = 0$ for any ideal $I$ such that $V(I) \subseteq W$. Because $\Supp (\Gamma_W(N)) \subseteq W$, $\Gamma_W(N) \otimes_R M = 0$. It follows that $N \otimes_R M \cong N' \otimes_R M$, so we can without loss of generality assume $\Gamma_W(N) = 0$.
  
  Now recall that since any module from $\Ccal\Mcal\Ical_0 = \Tcal_{\height}$ admits a canonical filtration by \cref{filtration}, we have $N \cong \bigoplus_{\qq \in \Spec R \setminus W}N_\qq$, and therefore we can without loss of generality assume that $N \cong N_\qq$ for some minimal prime $\qq$. But then $M \otimes_R N \cong (M \otimes_R N)_\qq$ is an $R_\qq$-module. It follows directly from the description of the minimal tilting class $\Tcal_{\height}$ of \cref{T-descr} that any $R_\qq$-module belongs to $\Tcal_{\height}$. 
\end{proof}
\section{The minimal cotilting class and Cohen-Macaulay flats}\label{s:cotilting}
In this section, we gather some dual results about the minimal cotilting class. It turns out that in this dual setting these are considerably easier to obtain. The definition of a \newterm{cotilting module} is dual to that of a tilting module. Namely, if $R$ is an associative unital ring, a left $R$-module $C$ is cotilting if $C \in \Ical$, $\Prod(C) \subseteq \Perp{}C$, and there is a short exact sequence $0 \to C_n \to \cdots \to C_1 \to C_0 \to W \to 0$ where all $C_i$'s belong to $\Prod(C)$ and $W$ is an injective cogenerator in the category $\lMod R$ of left $R$-modules. Any cotilting module $C$ induces the \newterm{cotilting cotorsion pair} $(\Perp{}C,(\Perp{}C)\Perp{})$. Two cotilting modules $C$ and $C'$ are equivalent if they induce the same cotilting cotorsion pair, or equivalently, if $\Prod(C) = \Prod(C')$. Let $(-)^+ = \Hom_{\Zbb}(-,\Qbb/\Zbb): \Mod R \to \lMod R$ denote the character duality functor. If $R$ is commutative noetherian then $T \mapsto T^+$ induces a bijection between the equivalence classes of tilting and cotilting $R$-modules \cite[Theorem 4.2]{AHPS14}. In this situation, let $\Tcal = T\Perp{}$ and $\Ccal = \Perp{}{(T^+)}$ be the induced tilting and cotilting class. Then $\Tcal$ and $\Ccal$ are \newterm{dual definable} classes. This means by definition that they are both definable classes and that we have the relations for any $M \in \Mod R$: $M \in \Tcal$ if and only if $M^+ \in \Ccal$, and analogously, $M \in \Ccal$ if and only if $M^+ \in \Tcal$. For more details, see \cite[\S 15, \S 16]{GT12}.

\subsection{} The \newterm{restricted flat dimension} of an $R$-module or $R$-complex $M$ was also introduced in \cite{CFF02} and is defined as $\Rfd_R(M) = \sup\{i \mid \Tor_i^R(\Fcal,M) \neq 0\}$. Unlike in the case of the restricted injective dimensions, one can always compute $\Rfd$ via a dual analog of the Chouinard invariant. That is, over any commutative noetherian ring $R$, we have the equality $\Rfd_R(M) = \sup\{\depth(R_\pp) - \depth_{R_\pp}(M_\pp) \mid \pp \in \Spec R\}$ for a cohomologically bounded $R$-complex $M$, see \cite[Theorem 2.4]{CFF02}.

\subsection{} Following \cite{SSY20}, the \newterm{Cohen-Macaulay flat dimension} over a local ring $R$ is defined as 
$$\CMSfd_R(M) =$$ 
$$= \inf\{\Gfd_Q(M \otimes_R S) - \Gfd_Q(S) \mid R \to S \gets Q \text{ is a CM-quasi-deformation}\}.$$
As with Cohen-Macaulay injectives, one has $\CMSfd_R(M) < \infty$ for all $R$-modules $M$ if and only if $R$ is Cohen-Macaulay \cite[Theorem 3.3]{SSY20}. For an arbitrary commutative noetherian ring $R$, we put $\Ccal\Mcal\Fcal_0 = \{M \in \Mod R \mid \CMSfd_{R_\mm}(M_\mm) \leq 0 ~\forall \mm \text{ maximal ideal}\}$. 

\subsection{} Let $R$ be a Cohen-Macaulay ring of finite Krull dimension. Denote by $\Ccal_{\height}$ the minimal cotilting class in $\Mod R$, that is, $\Ccal_{\height} = \{M \in \Mod R \mid \depth_R(\pp,M) \geq \height(\pp) ~\forall \pp \in \Spec R\}$, see \cite[Theorem 4.2]{AHPS14}. It follows directly from the general description of cosilting t-structures in $\D(R)$ \cite[2.15, 2.16]{HNS} that $\Ccal_{\height} = \{M \in \Mod R \mid \depth_{R_\pp}(M_\pp) \geq \height(\pp) ~\forall \pp \in \Spec R\}$. Recalling that the Cohen-Macaulay-ness of $R$ ensures $\depth_R(\pp,R) = \depth(R_\pp) = \height(\pp)$ for all $\pp \in \Spec R$, we have $\Ccal_{\height} = \{M \in \Mod R \mid \Rfd_R(M) \leq 0\}$.

Recall that if $R$ is a commutative noetherian ring with a dualizing complex then it is known that the classes $\Gcal\Ical_0$ and $\Gcal\Fcal_0$ of Gorenstein injective and flat modules are dual definable, $\Gcal\Fcal_0$ is covering, and $\Gcal\Ical_0$ is enveloping \cite[(2.6,3.3)]{HJ09}, \cite{EI15}.
\begin{prop}\label{cm-flat}
  Let $R$ be a Cohen-Macaulay ring, then $\Ccal_{\height} = \Ccal\Mcal\Fcal_0$. Therefore, $\Ccal\Mcal\Fcal_0$ is a covering class. In addition, the classes $\Ccal\Mcal\Fcal_0$ and $\Ccal\Mcal\Ical_0$ are dual definable.
\end{prop}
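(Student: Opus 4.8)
The plan is to argue via the now-standard dictionary between tilting/cotilting modules and dual definable classes, together with the descent results for the (restricted) flat dimension that have already been assembled. First I would establish the equality $\Ccal_{\height} = \Ccal\Mcal\Fcal_0$ by a local-to-global reduction exactly parallel to \cref{CHCMid}: membership in $\Ccal_{\height}$ is checked on the stalks $R_\pp$ (it is a definable class cut out by the conditions $\depth_{R_\pp}(M_\pp) \geq \height(\pp)$), so it suffices to treat a local Cohen-Macaulay ring $R$. Over a local ring, I would run the flat analogue of the chain of equalities \cref{CHCMid}: for any cohomologically bounded $M$, the value $\CMSfd_R(M)$ is computed by a CM-quasi-deformation $R \to S \gets Q$, and by \cite[Theorem 3.3, Proposition 4.8]{SSY20} together with the Auslander--Bridger formula one gets $\CMSfd_R(M) = \CH_*^{\mathrm{fd}}$-type invariant, i.e.\ $\CMSfd_R(M) = \Rfd_R(M) = \sup\{\depth(R_\pp) - \depth_{R_\pp}(M_\pp) \mid \pp \in \Spec R\}$, which is exactly the dual-Chouinard formula quoted in the excerpt. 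Since $R$ is Cohen-Macaulay this equals the condition defining $\Ccal_{\height}$ at the level of "$\leq 0$". (Here one should be careful that $\CMSfd$ — unlike $\CMSid$ — already behaves well for arbitrary modules by \cite[Theorem 3.3]{SSY20}, so no passage to $\CMSfd$'s "Hom-version" is needed; this is the sense in which the cotilting side is easier.)

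With $\Ccal_{\height} = \Ccal\Mcal\Fcal_0$ in hand, the covering property is immediate: $\Ccal_{\height}$ is the minimal cotilting class over a commutative noetherian ring (\cite[Theorem 4.2]{AHPS14}), hence is a definable class which is the right-hand class of a cotilting cotorsion pair; by \cite[\S 15, \S 16]{GT12} (or \cite[Theorem 8.6.1]{GT12}-type statements for cotorsion pairs with a definable right class) every module admits a $\Ccal_{\height}$-cover, so $\Ccal\Mcal\Fcal_0$ is covering.

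For the final claim — that $\Ccal\Mcal\Fcal_0$ and $\Ccal\Mcal\Ical_0$ are dual definable — I would invoke the correspondence $T \mapsto T^+$ recalled at the start of \cref{s:cotilting}: over a commutative noetherian ring it induces a bijection between equivalence classes of tilting and cotilting modules, and the induced tilting class $\Tcal = T\Perp{}$ and cotilting class $\Ccal = \Perp{}(T^+)$ are automatically dual definable (\cite[Theorem 4.2]{AHPS14}, \cite[\S 16]{GT12}). Thus it is enough to check that the height tilting module $T_{\height}$ of \cite{HNS} and the minimal cotilting module correspond under $(-)^+$, equivalently that the cotilting class dual to $\Tcal_{\height}$ is $\Ccal_{\height}$. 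This can be read off from the classification: both are pinned down by the same characteristic function $\height$ on $\Spec R$ in the bijections of \cite[Theorem 4.2]{AHPS14}, so $\Tcal_{\height}$ and $\Ccal_{\height}$ are dual definable by construction. Combining with $\Tcal_{\height} = \Ccal\Mcal\Ical_0$ (\cref{TCMid}) and $\Ccal_{\height} = \Ccal\Mcal\Fcal_0$ (just proved) gives the assertion.

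The main obstacle I anticipate is the very first step: verifying the flat analogue $\CMSfd_R(M) = \Rfd_R(M)$ over a local Cohen-Macaulay ring with the correct "$\leq 0$" behaviour and, in particular, checking that the reduction to artinian-closed-fibre CM-quasi-deformations (the flat counterpart of \cref{artinian-fibre}, \cref{CMidCHgen}) goes through for \emph{arbitrary} modules rather than only finitely generated ones. Since the excerpt notes that $\CMSfd$ already characterises Cohen-Macaulay rings on all modules, I expect this to be genuinely easier than the injective case; the computation should follow \cite[Propositions 3.13, 4.8]{SSY20} together with \cite[Proposition 5.2]{Iy99} and its width/depth dual, much as in \cref{CHCMdef}. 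Once that equality is secured, the remaining two assertions are formal consequences of the cotilting–definable class machinery and the classification of (co)tilting classes over commutative noetherian rings.
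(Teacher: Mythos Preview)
Your approach is essentially the paper's: the equality $\Ccal_{\height} = \Ccal\Mcal\Fcal_0$ comes from $\Rfd_R \equiv \CMSfd_R$, and the covering and dual-definability claims follow from the tilting/cotilting dictionary together with \cref{TCMid}. Two remarks: the identity $\Rfd_R = \CMSfd_R$ over a Cohen-Macaulay ring is already established in \cite[Corollary 4.2, Theorem 3.3]{SSY20} for arbitrary modules, so your ``main obstacle'' dissolves and no flat analogue of \cref{artinian-fibre} or \cref{CMidCHgen} needs to be redone; and $\Ccal_{\height} = \Perp{}C$ is the \emph{left} constituent of the cotilting cotorsion pair, not the right one --- the covering property then comes from \cite[Theorem 15.9]{GT12} rather than from a ``definable right class'' argument.
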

\begin{proof}
  The first claim follows from the equality $\Rfd_R \equiv \CMSfd_R$ which is proved in \cite[Corollary 4.2, Theorem 3.3]{SSY20}. The rest follows from \cref{TCMid}, \cite[Theorem 15.9]{GT12}, and the discussion above.
\end{proof}
\bibliographystyle{amsalpha}
\bibliography{bibitems}
\end{document}